\numberwithin{equation}{section}
\newcommand\R{\mathbb{R}}
\newtheorem{theorem}{Theorem}[section]
\newtheorem{corollary}[theorem]{Corollary}
\newtheorem{lemma}[theorem]{Lemma}
\newtheorem{proposition}[theorem]{Proposition}
\theoremstyle{remark}
\theoremstyle{remark}
\theoremstyle{remark}
\newtheorem{remark}[theorem]{Remark}
\newcommand{\indlim}{\operatornamewithlimits{ind\, lim}}
\newcommand{\prlim}{\operatornamewithlimits{proj\, lim}}
\begin{document}

\vspace{-20mm}
\begin{center}{\Large \bf 
Lie structures of the group of Sheffer operators}
\end{center}

{\large Dmitri Finkelshtein}\\ Department of Mathematics, Swansea University, Bay Campus, Swansea SA1 8EN, U.K.;
e-mail: \texttt{d.l.finkelshtein@swansea.ac.uk}\vspace{2mm}

{\large Eugene Lytvynov}\\ Department of Mathematics, Swansea University, Bay Campus, Swansea SA1 8EN, U.K.;
e-mail: \texttt{e.lytvynov@swansea.ac.uk}\vspace{2mm}

{\large Maria Jo\~{a}o Oliveira}\\ DCeT, Universidade Aberta, 
 1269-001 Lisbon, Portugal; Center for Mathematical Studies, University of Lisbon, 1749-016 Lisbon, Portugal;\\
e-mail: \texttt{mjoliveira@ciencias.ulisboa.pt}\vspace{2mm}

{\small
\begin{center}
{\bf Abstract}
 \end{center}
 
\noindent
 Let $\Phi$ be an  (LB)-space over $\mathbb F=\mathbb R$ or $\mathbb C$, and let $\Phi'$ be the dual space of~$\Phi$. We study the set $\mathbb S(\Phi)$ of Sheffer operators acting in polynomials on $\Phi'$. We prove that $\mathbb S(\Phi)$ is a group for the usual product of operators. We equip $\mathbb S(\Phi)$ with a natural topology which makes $\mathbb S(\Phi)$ into an infinite-dimensional manifold with a global parametrization. We show that $\mathbb S(\Phi)$ is an infinite-dimensional, regular Lie group, and provide an explicit description of the Lie algebra of $\mathbb S(\Phi)$, including an explicit form of the Lie bracket on it. Our main results are new even in the one-dimensional case, $\Phi=\mathbb{F}$. Furthermore, our results lead to improved understanding of the Lie algebra of the Riordan group, cf.\ Cheon, Luz\'on, Mor\'on, Prieto-Martinez, {\it Adv. Math.} 319 (2017) 522--566.

 } \vspace{2mm}

{\bf Keywords:} Sheffer sequence, Sheffer group, umbral calculus, (LB)-space, infinite-dimensional Lie group, Weyl algebra, Riordan group \vspace{2mm}

{\bf 2020 MSC. Primary:} 22E66, 05A40, 46G20, 47D03. {\bf Secondary:} 46A13, 46M05, 46M40.

\section{Introduction}
Let $(p_k(z))_{k\in\mathbb N_0}$ be a monic polynomial sequence on $\mathbb F=\R$ or $\mathbb C$. One says that $(p_k(z))_{k\in\mathbb N_0}$ is a Sheffer sequence if its (exponential) generating function has the form $\sum_{k=0}^\infty \frac{\xi^k}{k!}\, p_k(z)=\exp[z B(\xi)]A(\xi)$, where $A(\xi)=1+\sum_{k=1}^\infty a_k\xi^k$ and $B(\xi)=\xi+\sum_{k=2}^\infty b_k\xi^k$ are formal power series in $\xi\in\mathbb F$. The class of Sheffer sequences includes sequences of binomial type, or just binomial sequences (for which $A(\xi)=1$), and Appell sequences (for which $B(\xi)=\xi$). The classification of all orthogonal Sheffer sequences was given by Meixner \cite{Meixner}.

 The theory studying Sheffer sequences is called (modern) umbral calculus. After a long period when umbral calculus was used for formal calculations, the theory became rigorous in the 1970s due to the seminal 
 works of G.-C.~Rota, S. Roman and their co-authors, e.g.\ \cite{RomanRota, RotaKahanerOdlyzko}, see also the monographs \cite{C, Roman}. Umbral calculus found applications in combinatorics, theory of special functions, approximation theory, probability and statistics, topology and physics, see e.g.\ the survey paper \cite{DiBL} and the references therein.

 Denote by $\mathbb S(\mathbb F)$, $\mathbb A(\mathbb F)$ and $\mathbb B(\mathbb F)$ the sets of Sheffer, Appell and binomial sequences, respectively. A central object of study of umbral calculus is the umbral composition, which 
equips $\mathbb S(\mathbb F)$, $\mathbb A(\mathbb F)$ and $\mathbb B(\mathbb F)$ with a group structure. Let us briefly explain how the umbral composition is defined. Denote by $\mathcal P(\mathbb F)$ the vector space of polynomials on~$\mathbb F$. Any monic polynomial sequence $(p_k(z))_{k\in\mathbb N_0}$ can be identified with a linear operator $P$ in $\mathcal P(\mathbb F)$ that satisfies $Pz^k=p_k(z)=z^{k}+\sum_{i=0}^{k-1} p_{ik}\, z^i$ for all $k\in\mathbb N_0$. Thus, in terms of the basis $(z^k)_{k\in\mathbb N_0}$ in $\mathcal P(\mathbb F)$, the operator $P$ can be represented as an infinite upper unitriangular matrix $P=[p_{ik}]_{i, k\in\mathbb N_0}$ (with $p_{kk}=1$ and $p_{ik}=0$ for all $i>k\ge0$). Let $(q_k(z))_{k\in\mathbb N_0}$ be another monic polynomial sequence whose corresponding matrix is $Q=[q_{ik}]_{i, k\in\mathbb N_0}$. Then the matrix of the product of the linear operators~$P$ and~$Q$, denoted by $R$, is equal to the product of the matrices $P$ and $Q$, and the operator $R$ determines a monic polynomial sequence $(r_k(z))_{k\in\mathbb N_0}$. By definition, the umbral composition of polynomial sequences $(p_k(z))_{k\in\mathbb N_0}$ and $(q_k(z))_{k\in\mathbb N_0}$ is $(r_k(z))_{k\in\mathbb N_0}$. Thus, 
$$r_k(z)=\sum_{i=0}^k q_{ik}p_i(z)=z^k+\sum_{i=0}^{k-1}\big(\sum_{j=i}^kp_{ij}q_{jk}\big)z^i.$$
Then $\mathbb S(\mathbb F)$, equipped with the umbral composition, is a group, $\mathbb A(\mathbb F)$ is an abelian normal subgroup of $\mathbb S(\mathbb F)$, $\mathbb B(\mathbb F)$ is a subgroup of $\mathbb S(\mathbb F)$, and $\mathbb S(\mathbb F)$ is the semidirect product of $\mathbb A(\mathbb F)$ and $\mathbb B(\mathbb F)$, denoted by $\mathbb S(\mathbb F)=\mathbb A(\mathbb F)\rtimes\mathbb B(\mathbb F)$, see e.g.\ \cite[Chapter~III, Sections~4, 5]{Roman}.

For a Sheffer sequence $(p_k(z))_{k\in\mathbb N_0}$, the corresponding operator $P$ is called a Sheffer operator. In the special case where $(p_k(z))_{k=0}^\infty$ is an Appell sequence or a binomial sequence, $P$ is called an Appell operator or an umbral operator, respectively. Below, with an abuse of notation, we will denote by $\mathbb S(\mathbb F)$, $\mathbb A(\mathbb F)$ and $\mathbb B(\mathbb F)$ also the groups of Sheffer, Appell and umbral operators, respectively. 

The umbral composition of Sheffer sequences (equivalently product of Sheffer operators) can be explicitly described with the help of their generating functions. Denote by $\mathcal F_0(\mathbb F)$ and $\mathcal F_1(\mathbb F)$ the sets of formal power series of the form $A(\xi)=1+\sum_{k=1}^\infty a_k\xi^k$ and $B(\xi)=\xi+\sum_{k=2}^\infty b_k\xi^k$, respectively. The $\mathcal F_0(\mathbb F)$ is an abelian 
group for multiplication of formal power series. The~$\mathcal F_1(\mathbb F)$ is a group for substitution of one formal power series into the other, i.e., for $B^{(1)}(\xi), B^{(2)}(\xi)\in \mathcal F_1(\mathbb F)$ their group product is given by $B^{(1)}(\xi)\circ B^{(2)}(\xi)=B^{(1)}(B^{(2)}(\xi))$. The group $\mathcal F_1(\mathbb F)$ was introduced by Jennings \cite{Jennings} in the 1950s and has been actively studied since that. We refer to the paper \cite{Babenko} which reviews numerous algebraic, geometric and topological problems related to this group. 
 
Next, consider the group $(\mathcal S(\mathbb F), \ast)$, where $\mathcal S(\mathbb F)=\mathcal F_0(\mathbb F)\times \mathcal F_1(\mathbb F)$ and the group product is defined by 
$$(A^{(1)}(\xi), B^{(1)}(\xi))\ast(A^{(2)}(\xi), B^{(2)}(\xi))=(A^{(1)}(B^{(2)}(\xi))A^{(2)}(\xi), B^{(1)}(B^{(2)}(\xi))).$$
The $\mathcal S(\mathbb F)$ is called the group of (unipotent) substitutions with
pre-function, see e.g.\ \cite{Poinsot} and the references therein. Then $\mathcal F_0(\mathbb F)$ and
$\mathcal F_1(\mathbb F)$ can be naturally identified as subgroups of $\mathcal S(\mathbb F)$ and furthermore $\mathcal S(\mathbb F)=\mathcal F_0(\mathbb F)\rtimes\mathcal F_1(\mathbb F)$. Define a map 
$\mathcal I:\mathcal S(\mathbb F)\to \mathbb S(\mathbb F)$ by $\mathcal I(A(\xi), B(\xi))=P$, where $P\in\mathbb S(\mathbb F)$ has the generating function $\exp[z B(\xi)]A(\xi)$. Then $\mathcal I$ is a group isomorphism, and furthermore $\mathcal I(\mathcal F_0(\mathbb F))=\mathbb A(\mathbb F)$ and $\mathcal I(\mathcal F_1(\mathbb F))=\mathbb B(\mathbb F)$, see e.g.\ \cite[Chapter~III, Section~5]{Roman}.

Motivated by problems of enumerative combinatorics, Shapiro et al.\ \cite{Shapiro} (see also \cite{ShapiroBook}) introduced the concept of Riordan matrices (arrays), which form a group under the matrix multiplication. A Riordan matrix is an infinite lower unitriangular matrix $R=[r_{ik}]_{i, k\in\mathbb N_0}$ ($r_{ik}\in\mathbb F$) satisfying $\sum_{i=k}^\infty r_{ik}=B(\xi)^kA(\xi)$ for all $k\in\mathbb N_0$, where $A(\xi)\in\mathcal F_0(\mathbb F)$ and $B(\xi)\in \mathcal F_1(\mathbb F)$. We denote by $\mathfrak R(\mathbb F)$ the Riordan group consisting of such matrices $R$. He et al.\ \cite{HeHsuShiue} (see also \cite{WangZhang}) proved that the Sheffer group and the Riordan group are isomorphic.

Note that, for each $R\in\mathfrak R(\mathbb F)$, the transposed matrix $R^T$ determines a monic polynomial sequence $(r_k(z))_{k\in\mathbb N_0}$ whose (ordinary) generating function is of the form\linebreak $\sum_{k=0}^\infty \xi^k r_k(z)=(1-z B(\xi))^{-1}A(\xi)$. Here the formal power series $A(\xi)$ and $B(\xi)$ are as in the definition of the Riordan matrix $R$. Orthogonal polynomials with such a generating function (even in several non-commuting variables) were discussed by Anshelevich \cite{ansh} (see also the references therein). Such polynomial sequences play an important role in free probability, see e.g.\ \cite{ansh2}. 

Cheon et al.\ \cite{Cheonatal} equipped the Riordan group with a structure of an infinite-di\-men\-sion\-al manifold, proved that $\mathfrak R(\mathbb F)$ is an infinite-dimensional Lie group, in the sense of Milnor \cite{Milnor}, and explicitly described the set of infinite matrices which form the Lie algebra of $\mathfrak R(\mathbb F)$. It should be noted that the paper \cite{Cheonatal} dealt with the larger group of Riordan lower triangular matrices that have non-zero entries on the diagonal. The paper \cite{Cheonatal} was preceded by Bacher's preprint \cite{Bacher}. See also the preceding paper~\cite{Poinsot} for a discussion of one-parameter subgroups of $\mathbb S(\mathbb F)$. 

A lot of research has been done to extend umbral calculus to the multivariate case, see Section~4 in \cite{DiBL}. However, this research had a significant drawback of being basis-dependent.
The paper \cite{DBLR} was a pioneering work in which some elements of such a theory were developed on a separable Hilbert space. However, no general definition of a binomial sequence or a Sheffer sequence on a Hilbert space was given therein. See also Section 4 of the follow-up paper \cite{DL}.

The paper~\cite{FKLO} developed foundations of infinite-dimensional, basis-independent umbral calculus.
In fact, examples of Sheffer sequences had previously appeared in infinite dimensional analysis on numerous occasions. Some of these polynomial sequences are orthogonal with respect to a probability measure, while others are related to point processes through their correlation functions or analytic structures on infinite dimensional spaces (see e.g.\ the references in~\cite{FKLO}).

Let us briefly recall the definition of a Sheffer sequence from \cite{FKLO} (in a slightly modified but essentially equivalent form, see also \cite{FKLO2}). 
Let $X$ be a locally compact topological vector space. Let $\Phi=C_\mathrm c(X;\mathbb F)$ be the space of continuous $\mathbb F$-valued functions on $X$ with compact support, equipped with its usual locally convex topology. Then the dual of $\Phi$ is $\Phi'=M(X;\mathbb F)$, the space of $\mathbb F$-valued Radon measures on $X$. For $k\ge 2$, denote by $\Phi^{\hat\odot k}$ the subspace of $C_\mathrm c(X^k;\mathbb F)$ consisting of symmetric functions of $k$ variables from $X$. Let also $\Phi^{\hat\odot 1}=\Phi$. For $\omega\in\Phi'$ and $f^{(k)}\in \Phi^{\hat\odot k}$ ($k\ge1$), denote $\langle\omega^{\otimes k}, f^{(k)}\rangle=\int_{X^k}f^{(k)}\, d\omega^{\otimes k}$. 
A polynomial on $\Phi'$ is a function of the form 
$\Phi'\ni\omega\mapsto p(\omega)=f^{(0)}+\sum_{k=1}^K\langle\omega^{\otimes k}, f^{(k)}\rangle$, where 
$f^{(0)}\in\mathbb F$, $f^{(k)}\in\Phi^{\hat\odot k}$ for $k=1, \dots, K$ and $K\in\mathbb N$. 
 Denote by $\mathcal P(\Phi')$ the vector space of polynomials on~$\Phi'$, and equip 
$\mathcal P(\Phi')$ with the topology of the locally convex direct sum of the spaces $\Phi^{\hat\odot k}$, $k\in\mathbb N_0$. (Here $\Phi^{\hat\odot 0}=\mathbb F$.) 
Consider a continuous linear operator $P$ in $\mathcal P(\Phi')$ that acts as follows:
\begin{equation} \label{vcxtew5u}
\big(P\langle\cdot^{\otimes k}, f^{(k)}\rangle\big)(\omega)=P_{0k}f^{(k)}+\sum_{i=1}^k \langle \omega^{\otimes i}, P_{ik}f^{(k)}\rangle, 
\end{equation}
where $P_{ik}\in\mathcal L(\Phi^{\hat\odot k}, \Phi^{\hat\odot i})$ is a continuous linear operator for $0\le i<k$ and $P_{kk}$ is the identity operator in~$\Phi^{\hat\odot k}$. Thus, $P$ can be written in the form of a block-matrix $[P_{ik}]_{i, k\in\mathbb N_0}$ with $P_{ik}=0$ for $i>k$. Denote by $P^{(k)}(\omega, f^{(k)})$ the polynomial on the right-hand side of \eqref{vcxtew5u}. Then $(P^{(k)}(\omega, f^{(k)}))_{f^{(k)}\in\Phi^{\hat\odot k}, \, k\in\mathbb N_0}$ is, by definition, a monic polynomial sequence on~$\Phi'$. We say that it is a Sheffer sequence on $\Phi'$ if its (exponential) generating function has the form $\sum_{k=0}^\infty \frac1{k!}\, P^{(k)}(\omega, \xi^{\otimes k})=\exp\big[\int_X B(\xi)\, d\omega\big]A(\xi)$ for $\omega\in\Phi'$ and $\xi\in\Phi$. Here
 $A(\xi)=1+\sum_{k=1}^\infty A_k\xi^{\otimes k}$, with $A_k\in\mathcal L(\Phi^{\hat\odot k}, \mathbb F)=(\Phi^{\hat\odot k})'$, and $B(\xi)=\xi+\sum_{k=2}^\infty B_k\xi^{\otimes k}$, with $B_k\in\mathcal L(\Phi^{\hat\odot k}, \Phi)$, are formal tensor power series in variable $\xi\in\Phi$, taking values in $\mathbb F$ and~$\Phi$, respectively. We similarly define an Appell sequence on $\Phi'$ (setting $B(\xi)=\xi$) and a binomial sequence on $\Phi'$ (setting $A(\xi)=1$). 
 
 For a Sheffer sequence, an Appell sequence and a binomial sequence on $\Phi'$, the corresponding operator~$P$ in~\eqref{vcxtew5u} 
 is called a Sheffer operator, an Appell operator and an umbral operator, respectively; see \cite{ShefHomeo}.
  We denote 
 the sets of these operators by $\mathbb S(\Phi)$, $\mathbb A(\Phi)$ and $\mathbb B(\Phi)$, respectively. 

Note that, if the underlying space $X$ contains only a finite number of points, then we recover from the above definition the multivariate Sheffer sequences. In particular, if $X$ has a single point, then this is just the classical definition of a monic Sheffer sequence on $\mathbb F$.

In this paper, we prove that the set of Sheffer operators, $\mathbb S(\Phi)$, is a group for the usual product of operators, and furthermore $\mathbb S(\Phi)=\mathbb A(\Phi)\rtimes\mathbb B(\Phi)$. 
We equip $\mathbb S(\Phi)$ with the topology of the topological product of the spaces $\mathcal L(\Phi^{\hat\odot k}, \Phi^{\hat\odot i})$ with $0\le i<k$, which makes~$\mathbb S(\Phi)$ into an infinite-dimensional manifold with a global parametrization. We show that $\mathbb S(\Phi)$ is an infinite-dimensional, regular Lie group, in the sense of Milnor \cite{Milnor}, and furthermore $\mathbb A(\Phi)$ and $\mathbb B(\Phi)$ are embedded Lie subgroups of $\mathbb S(\Phi)$ \cite{Gloeckner}. We provide an explicit description of the Lie algebra $\mathfrak s(\Phi)$ of $\mathbb S(\Phi)$. 

Let us briefly describe the latter result. Note that each functional $\alpha_k\in(\Phi^{\hat\odot k})'$ can be identified with an $\mathbb F$-valued Radon measure $\alpha_k(dx_1\dotsm dx_k)$ on $X^k$. Similarly, an operator $\beta_k\in\mathcal L(\Phi^{\hat\odot k}, \Phi)$ can be identified with an integral kernel: $(\beta_kf^{(k)})(x)=\int_{X^k} f^{(k)}(x_1, \dots, x_k)\, \beta_k(x, dx_1\dotsm dx_k)$. We prove that a continuous linear operator $Q$ in $\mathcal P(\Phi')$ belongs to $\mathfrak s(\Phi)$ if and only if there exist sequences $(\alpha_k)_{k=1}^\infty$, with $\alpha_k\in(\Phi^{\hat\odot k})'$, and $(\beta_k)_{k=2}^\infty$\, , with $\beta_k\in\mathcal L(\Phi^{\hat\odot k}, \Phi)$, such that, for each $p\in \mathcal P(\Phi')$, 
\begin{align}
(Qp)(\omega)&=\sum_{k=1}^\infty\int_{X^k} (D_{x_1}\dotsm D_{x_k}\, p)(\omega)\, \alpha_k(dx_1\dotsm dx_k)\notag\\
&\quad+\sum_{k=2}^\infty \int_X\int_{X^k} (D_{x_1}\dotsm D_{x_k}\, p)(\omega)\, \beta_k(x, dx_1\dotsm dx_k)\, \omega(dx).
\label{vftyre6i4bhft}\end{align}
Here, for $x\in X$, $D_x$ denotes the G\^ateaux derivative in direction $\delta_x$, the Dirac measure at $x$. As a corollary, we conclude that a continuous linear operator $P$ in $\mathcal P(\Phi')$ belongs to $\mathbb S(\Phi)$ if and only if $P=\exp(Q)$ where $Q$ is as in \eqref{vftyre6i4bhft}. The Lie algebra~$\mathfrak a(\Phi)$ of $\mathbb A(\Phi)$ is characterized by the requirement that $\beta_k=0$ for all $k\ge2$, and the Lie algebra $\mathfrak b(\Phi)$ of $\mathbb B(\Phi)$ is characterized by the requirement that $\alpha_k=0$ for all $k\ge1$.

Even in the one-dimensional case, this description of $\mathfrak s(\mathbb F)$ is a new result. In fact, in this case, formula \eqref{vftyre6i4bhft} takes the simple form
\begin{equation}\label{vcdrt4e}
(Qp)(z)=\sum_{k=1}^\infty\alpha_k (D^kp)(z)+z\sum_{k=2}^\infty \beta_k (D^kp)(z), 
\end{equation}
where $\alpha_k, \beta_k\in\mathbb F$ and $D$ denotes the derivative.  Note that, for this result, it is principal that we are dealing with the Sheffer group $\mathbb S(\mathbb F)$ rather than the Riordan group $\mathfrak R(\mathbb F)$, see Subsection~\ref{vgdfr6e64u} below. 

The Lie bracket on $\mathfrak s(\Phi)$ is the usual commutator of operators acting in $\mathcal P(\Phi')$. To write down the Lie bracket explicitly, we first note that formula \eqref{vftyre6i4bhft} provides a bijection between $\mathfrak s(\Phi)$ and the set $\mathcal W(\Phi)$ of all pairs of formal tensor power series $(\alpha(\xi), \beta(\xi))$ of variable $\xi\in\Phi$, where $\alpha(\xi)=\sum_{k=1}^\infty \alpha_k\xi^{\otimes k} $ and $\beta(\xi)=\sum_{k=2}^\infty\beta_k\xi^{\otimes k}$, with $\alpha_k$ and $\beta_k$ as in \eqref{vftyre6i4bhft}. Then the image of the Lie bracket on $\mathfrak s(\Phi)$ under this bijective map is the following Lie bracket on $\mathcal W(\Phi)$:
\begin{equation}\label{cxtewu65}
[(\alpha^{(1)}(\xi), \beta^{(1)}(\xi)), (\alpha^{(2)}(\xi), \beta^{(2)}(\xi)) ]=(\alpha(\xi), \beta(\xi)), 
\end{equation}
where
\begin{align}
\alpha(\xi)&= D_{\beta^{(2)}}\alpha^{(1)}(\xi)- D_{\beta^{(1)}}\alpha^{(2)}(\xi), \label{bvcyed6}\\
\beta(\xi)&= D_{\beta^{(2)}}\beta^{(1)}(\xi)- D_{\beta^{(1)}}\beta^{(2)}(\xi).\label{cfre6u4e}
\end{align}
Here $D_{\beta^{(i)}}$ denotes the derivative of a formal tensor power series in direction of the formal tensor power series $\beta^{(i)}(\xi)$, see Section~\ref{vgydi67r} below. 

Note that, in the one-dimensional case, the operator $D$ of differentiation and the operator $M$ of multiplication by the variable $z$ satisfy the commutation relation $[D, M]=1$, hence they generate a Weyl algebra. Therefore, the explicit form of the Lie bracket on $\mathfrak s(\mathbb F)$ can be thought of as a non-trivial consequence of formula~\eqref{vcdrt4e} and the commutation relation in the Weyl algebra. A similar observation holds in the general case, see Subsection~\ref{xra5aq354q} below.

We also briefly discuss an infinite-dimensional generalization $\mathfrak R(\Phi) $ of the Riordan group $\mathfrak R(\mathbb F)$. The elements of $\mathfrak R(\Phi) $ can be written in the form of a block-matrix $[(k!/i!)P_{ik}]_{i, k\in\mathbb N_0}$, provided that $[P_{ik}]_{i, k\in\mathbb N_0}$ belongs to $\mathbb S(\Phi)$. This representation leads to a group isomorphism between $\mathfrak R(\Phi) $ and $\mathbb S(\Phi)$, compare with  \cite{HeHsuShiue,WangZhang}.  We note that, in the case where the space $\Phi$ is finite-dimensional, i.e., $\Phi=\mathbb F^N$ ($N\in\mathbb N$, $N\ge2$), $\mathfrak R(\Phi)$ is (the subgroup of the unitriangular block-matrices from) the multivariate Riordan group as defined and studied in~\cite[Section~7.3]{ShapiroBook}, see also the references therein.

 We provide an explicit description for the matrix representation of the Lie algebra $\mathfrak r(\Phi) $ of $\mathfrak R(\Phi) $ and the Lie bracket on it. In the  one-dimensional case, $\Phi=\mathbb F$, this result leads to improved understanding of the Lie algebra $\mathfrak R(\mathbb F)$, compare with \cite{Cheonatal}.

Under a weak assumption on the underlying space $X$, the space $\Phi=C_\mathrm c(X;\mathbb F)$ is a (strict) (LB)-space. So everywhere below we will only assume that $\Phi$ is a (strict) (LB)-space and $\Phi'$ is its dual space. We will also assume below that all vector spaces under consideration are complex. Nevertheless, our results remain true in the real case, in particular, all the analytic maps appearing in this paper will be real analytic in the real case. 

The paper is organized as follows. In Section~\ref{vgdtrs5w4v}, we discuss preliminaries: (LB)-spaces, continuous linear operators acting between (LB)-spaces, injective tensor product of Banach spaces, and the special case of the (LB)-space $C_\mathrm c(X;\mathbb C)$. In Section~\ref{vgydi67r}, we discuss Lie structures of $\mathcal S(\Phi)$, an infinite-dimensional generalization of the Lie group~$\mathcal S(\mathbb C)$. In Section~\ref{tdr6e65e48}, we discuss Lie structures of the group of continuous linear operators in $\mathcal P(\Phi')$ that determine monic polynomial sequences on $\Phi'$. In Section~\ref{bvfyr64u}, we discuss the main results of the paper related to the Sheffer Lie group $\mathbb S(\Phi)$. Finally, in 
the Appendix~A, we present several auxiliary results concerning analyticity of functions acting between spaces of continuous linear operators in (LB)-spaces; and in the Appendix~B, we briefly recall some constructions of Milnor \cite{Milnor}, regarding infinite-dimensional Lie structures, in the special case of  a global parametrization.

\section{Preliminaries} \label{vgdtrs5w4v}

\subsection{(LB)-spaces and continuous linear operators in them}\label{vctes56u7}
 Let $(\Phi_n)_{n=1}^\infty$ be a sequence of complex Banach spaces such that, for each $n\in\mathbb N$, $\Phi_n$ is a subspace of $\Phi_{n+1}$, and the restriction of the norm in $\Phi_{n+1}$ to $\Phi_n$ coincides with the norm in $\Phi_n$. Denote $\Phi:=\bigcup_{n=1}^\infty \Phi_n$ and equip $\Phi$ with the inductive limit topology of the $\Phi_n$ spaces being embedded into $\Phi$, see e.g.\ \cite[Ch.~II, \S~6]{Schaefer}. 
 Then $\Phi$ is called an {\it (LB)-space}, see e.g.\ \cite[Ch.~II, \S~6.3]{Schaefer} (sometimes $\Phi$ is also called {\it a strict (LB)-space}\/). Obviously, an (LB)-space is a special case of a locally convex topological vector space (l.c.s.).

By \cite[Corollaire, p.~87]{DS} or \cite[Ch.~II, \S~6.6]{Schaefer}, the space $\Phi$ is complete. If each $\Phi_n$ is a proper subspace of $\Phi_{n+1}$, then, by \cite[Corollaire~2]{DS}, the space $\Phi$ is not metrizable. By \cite[Proposition~2]{DS}, the topology induced by $\Phi$ on each $\Phi_n$ coincides with the original topology on $\Phi_n$. By \cite[Proposition~4]{DS}, a subset $E$ of $\Phi$ is bounded, respectively compact if and only if $E\subset\Phi_n$ for some $n\in\mathbb N$ and $E$ is bounded, respectively compact in $\Phi_n$. The latter property means that $\Phi$ is compactly regular, see e.g.\ \cite[Definition~8.5.32]{PCB}.

For l.c.s.'s $V_1$ and $V_2$, we denote by $\mathcal L(V_1, V_2)$ the vector space of all continuous linear operators $A:V_1\to V_2$. If $V=V_1=V_2$, we denote $\mathcal L(V):=\mathcal L(V, V)$.

\begin{lemma}\label{vcyre6i}
Let $\Phi=\indlim_{n\to\infty}\Phi_n$ and $\Psi=\indlim_{m\to\infty}\Psi_m$ be (LB)-spaces. A linear operator $A:\Phi\to\Psi$ is continuous if and only if, for each $n\in\mathbb N$, there exists $m\in\mathbb N$ such that the restriction $A\restriction_{\Phi_n}\in\mathcal L(\Phi_n, \Psi_m)$.
\end{lemma}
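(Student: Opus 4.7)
The plan is to prove the two directions separately, relying on the structural properties of (LB)-spaces quoted just above the statement.

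For the easy direction (sufficiency), suppose that for every $n$ there exists $m=m(n)$ with $A\restriction_{\Phi_n}\in\mathcal L(\Phi_n,\Psi_m)$. Composing with the continuous inclusion $\Psi_m\hookrightarrow\Psi$, I get that $A\restriction_{\Phi_n}:\Phi_n\to\Psi$ is continuous for every $n$. The topology on $\Phi=\indlim_n\Phi_n$ is by definition the finest locally convex topology making all the inclusions $\Phi_n\hookrightarrow\Phi$ continuous, so by its universal property a linear map out of $\Phi$ is continuous as soon as each restriction to $\Phi_n$ is; this yields $A\in\mathcal L(\Phi,\Psi)$.

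For the nontrivial direction (necessity), assume $A\in\mathcal L(\Phi,\Psi)$. Fix $n\in\mathbb N$. Since the inclusion $\Phi_n\hookrightarrow\Phi$ is continuous, $A\restriction_{\Phi_n}:\Phi_n\to\Psi$ is continuous. Let $B_n$ be the closed unit ball of the Banach space $\Phi_n$; it is bounded in $\Phi_n$, hence bounded in $\Phi$, so $A(B_n)$ is a bounded subset of $\Psi$. Here I would invoke the key fact quoted from \cite[Proposition~4]{DS}: every bounded subset of the (LB)-space $\Psi$ is contained in, and bounded in, some step $\Psi_m$. Therefore there exists $m\in\mathbb N$ with $A(B_n)\subset\Psi_m$. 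By linearity and the fact that $\Phi_n=\bigcup_{k\in\mathbb N}kB_n$, this upgrades to $A(\Phi_n)\subset\Psi_m$.

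It remains to see that $A\restriction_{\Phi_n}$ is continuous as a map into the Banach space $\Psi_m$ rather than merely into $\Psi$. This uses the other quoted fact, namely \cite[Proposition~2]{DS}: the topology that $\Psi$ induces on $\Psi_m$ coincides with the original Banach topology of $\Psi_m$. Hence a linear map with image in $\Psi_m$ that is continuous into $\Psi$ is automatically continuous into $\Psi_m$, giving $A\restriction_{\Phi_n}\in\mathcal L(\Phi_n,\Psi_m)$ as required. The only place that needs care is the step where boundedness of $A(B_n)$ in $\Psi$ is promoted to containment in a single $\Psi_m$; this is precisely the compact/bounded regularity of (LB)-spaces and is the substantive input, though here it is cited as a known fact rather than reproved.
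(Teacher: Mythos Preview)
Your proof is correct and follows essentially the same route as the paper's: both directions rest on the universal property of the inductive limit for the easy implication, and on bounded regularity of the (LB)-space $\Psi$ (the cited \cite[Proposition~4]{DS}) for the substantive one. The only cosmetic difference is in the last step: the paper concludes $A\restriction_{\Phi_n}\in\mathcal L(\Phi_n,\Psi_m)$ by observing that $A$ is a bounded linear map between Banach spaces (hence continuous), whereas you instead invoke \cite[Proposition~2]{DS} on the coincidence of induced and original topologies on $\Psi_m$; both finishes are equivalent.
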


\begin{proof} By \cite[Proposition~5]{DS} or \cite[Ch.~II, \S~6.1]{Schaefer}, a linear operator $A:\Phi\to\Psi$ is continuous if and only if $A\restriction_{\Phi_n}\in\mathcal L(\Phi_n, \Psi)$ for all $n\in\mathbb N$. 
By the definition of the inductive limit, we obviously have the inclusion
$\bigcup_{m\in\mathbb N}\mathcal L(\Phi_n, \Psi_m)\subset \mathcal L(\Phi_n, \Psi)$.
On the other hand, if $A\in \mathcal L(\Phi_n, \Psi)$, then the linear operator $A$ is bounded, i.e., it sends a bounded subset of $\Phi_n$ to a bounded subset of $\Psi$, see e.g.\ \cite[Section~4-4]{Wilansky}. Since a bounded subset of $\Psi$ is a bounded subset of some $\Psi_m$, this implies that the linear operator $A$ maps $\Phi_n$ into $\Psi_m$ and is bounded. Hence $A\in\mathcal L(\Phi_n, \Psi_m)$, and so 
\begin{equation}\bigcup_{m\in\mathbb N}\mathcal L(\Phi_n, \Psi_m)=\mathcal L(\Phi_n, \Psi).\label{utf7ir}
\end{equation}
\end{proof}

\begin{remark}\label{seiytr8o5w5} Let $\Phi=\indlim_{n\to\infty}\Phi_n$ be an (LB)-space, and let $\Phi'=\mathcal L(\Phi, \mathbb C)$ be its (continuous) dual space. By Lemma~\ref{vcyre6i} with $\Psi=\mathbb C$, we have
$\Phi'=\bigcap_{n\in\mathbb N} \Phi_n'$. 
\end{remark}

For $\Phi$ and $\Psi$ as in Lemma \ref{vcyre6i}, we define a topology on $\mathcal L(\Phi, \Psi)$ as follows. 
First, we fix $n\in\mathbb N$. Each $\mathcal L(\Phi_n, \Psi_m)$ is a Banach space, $\mathcal L(\Phi_n, \Psi_m)$ is a subspace of $\mathcal L(\Phi_n, \Psi_{m+1})$, and the restriction of the norm in $\mathcal L(\Phi_n, \Psi_{m+1})$ 
to $\mathcal L(\Phi_n, \Psi_{m})$ coincides with the norm in $\mathcal L(\Phi_n, \Psi_{m})$. Hence, in view of \eqref{utf7ir}, we construct the (LB)-space
\begin{equation*}
\mathcal L(\Phi_n, \Psi)=\indlim_{m\to\infty}\mathcal L(\Phi_n, \Psi_m).
\end{equation*}
Next, in view of Lemma~\ref{vcyre6i} and its proof, we may define, for each $n\in\mathbb N$, a linear map
\begin{equation*}
\mathcal L(\Phi, \Psi)\ni A\mapsto A\restriction_{\Phi_n}\in\mathcal L(\Phi_n, \Psi).
\end{equation*}
By using these maps, we equip 
$\mathcal L(\Phi, \Psi)=\bigcap_{n=1}^\infty \bigcup_{m=1}^\infty\mathcal L(\Phi_n, \Psi_m)$
 with the projective limit topology (see e.g.\ \cite[Ch.~II, \S~5]{Schaefer}):
\begin{equation*}
\mathcal L(\Phi, \Psi)=\prlim_{n\to\infty}\mathcal L(\Phi_n, \Psi).
\end{equation*}
By e.g.\ \cite[Ch.~II, \S~5.3]{Schaefer}, the l.c.s.\ $\mathcal L(\Phi, \Psi)$ is complete. In particular, the dual space $\Phi'=\mathcal L(\Phi, \mathbb C)$ is complete.

 For a l.c.s.\ $V$ and a closed interval $I\subset \mathbb{R}$, we consider the space $C(I;V)$ of all continuous $V$-valued curves (functions) on $I$. We equip $C(I;V)$ with its usual (locally convex) topology of uniform convergence. Thus, a local base for zero in $C(I;V)$ is given by the sets $\{f\in C(I;V): f(t)\in U\ \forall t\in I\}$, where $U\in\mathcal N_0$ and $\mathcal N_0$ is a local base for zero in $V$.
 
 We similarly define a topology on $C^k(I;V)$ ($k\in\mathbb N$), the space of all $k$-times  continuously differentiable $V$-valued curves on $I$. Furthermore, we equip $C^\infty(I;V)=\bigcap_{k=0}^\infty C^k(I;V)$ with the projective limit topology of the $C^k(I;V)$ spaces, which makes $C^\infty(I;V)$ an l.c.s. (Here $C^0(I;V):=C(I;V)$.) We will call functions from $C^\infty(I;V)$ \emph{smooth curves}.

\begin{lemma}\label{vcrswu5} Let $\Phi$ and $\Psi$ be as in Lemma \ref{vcyre6i}. We have
\begin{align}
C([0, 1];\mathcal L(\Phi_n, \Psi))&=\indlim_{m\to\infty}C([0, 1];\mathcal L(\Phi_n, \Psi_m)), \label{fxdste5swu5}\\
C([0, 1];\mathcal L(\Phi, \Psi))&=\prlim_{n\to\infty}C([0, 1];\mathcal L(\Phi_n, \Psi)), \label{fstsw5u}
\end{align}
where in formula \eqref{fstsw5u}, the projective limit is constructed via the maps
$$ C([0, 1];\mathcal L(\Phi, \Psi))\ni A(\cdot)\mapsto A(\cdot)\restriction_{\Phi_n}\in C([0, 1];\mathcal L(\Phi_n, \Psi)), \quad n\in\mathbb N.$$
\end{lemma}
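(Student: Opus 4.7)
The plan is to derive \eqref{fxdste5swu5} from compact regularity of strict (LB)-spaces, and \eqref{fstsw5u} from the universal property of projective limits applied to continuous curves.

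For \eqref{fxdste5swu5}, I would first observe that $\mathcal L(\Phi_n, \Psi)=\indlim_{m\to\infty}\mathcal L(\Phi_n, \Psi_m)$ is itself a strict (LB)-space: the bonding maps $\mathcal L(\Phi_n,\Psi_m)\hookrightarrow \mathcal L(\Phi_n,\Psi_{m+1})$ are isometric embeddings of Banach spaces, so the compact regularity property recalled in Section~\ref{vctes56u7} applies. Hence every compact subset of $\mathcal L(\Phi_n,\Psi)$ is contained in, and compact in, some step $\mathcal L(\Phi_n,\Psi_m)$. Given $A(\cdot)\in C([0,1];\mathcal L(\Phi_n,\Psi))$, the image $A([0,1])$ is compact in $\mathcal L(\Phi_n,\Psi)$, hence is contained in some $\mathcal L(\Phi_n,\Psi_m)$. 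Since the subspace topology on $\mathcal L(\Phi_n,\Psi_m)$ induced from $\mathcal L(\Phi_n,\Psi)$ coincides with its original Banach topology (as in the preliminaries), $A(\cdot)$ remains continuous as a curve into $\mathcal L(\Phi_n,\Psi_m)$. This proves the set-theoretic inclusion $\subset$ in \eqref{fxdste5swu5}; the opposite inclusion is immediate from continuity of the embeddings $\mathcal L(\Phi_n,\Psi_m)\hookrightarrow \mathcal L(\Phi_n,\Psi)$. For the locally convex topologies, the inductive limit topology on the right-hand side is at least as fine as the uniform-convergence topology on the left. For the reverse, one verifies that a local base of zero on the right is recovered by seminorms of the form $\sup_{t\in[0,1]}\|A(t)\|_{\mathcal L(\Phi_n,\Psi_m)}$ restricted to curves whose images lie in $\mathcal L(\Phi_n,\Psi_m)$, and matches the zero neighborhoods in $C([0,1];\mathcal L(\Phi_n,\Psi))$ via the same compact-regularity mechanism.

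For \eqref{fstsw5u}, I would invoke the universal property of the projective limit $\mathcal L(\Phi,\Psi)=\prlim_{n\to\infty}\mathcal L(\Phi_n,\Psi)$: giving a continuous map from a topological space $[0,1]$ into a projective limit is the same as giving a compatible family of continuous maps into the factors. Concretely, each $A(\cdot)\in C([0,1];\mathcal L(\Phi,\Psi))$ yields, via composition with the restriction map $\mathcal L(\Phi,\Psi)\ni B\mapsto B\restriction_{\Phi_n}\in\mathcal L(\Phi_n,\Psi)$, a compatible family $(A(\cdot)\restriction_{\Phi_n})_{n\in\mathbb N}$, and conversely such a compatible family defines pointwise a curve into the projective limit which is automatically continuous. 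The topologies coincide because the uniform-convergence topology on the left is generated by the seminorms $A(\cdot)\mapsto \sup_{t\in[0,1]} q(A(t))$ as $q$ runs over the defining seminorms of $\mathcal L(\Phi,\Psi)$, and these seminorms are precisely pulled back from the sup-seminorms on each $C([0,1];\mathcal L(\Phi_n,\Psi))$.

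The main obstacle is the first identity: the set-theoretic step truly needs compact regularity, since $\mathcal L(\Phi_n,\Psi)$ is not metrizable in general, and the coincidence of the two locally convex topologies requires one to unpack that any equicontinuous or bounded family of curves is uniformly contained in a single step $\mathcal L(\Phi_n,\Psi_m)$. The second identity is essentially formal once the projective-limit structure from Section~\ref{vctes56u7} is available.
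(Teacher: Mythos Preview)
Your approach is essentially the same as the paper's: compact regularity of the strict (LB)-space $\mathcal L(\Phi_n,\Psi)$ for \eqref{fxdste5swu5}, and the definition/universal property of the projective limit for \eqref{fstsw5u}. The paper simply cites Doma\'nski for the first identity rather than unpacking it; your set-theoretic argument is exactly right, and the topological coincidence you sketch (which is the less trivial half) is precisely what that reference supplies, so nothing is missing.
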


\begin{proof} Since the (LB)-space is compactly regular, formula \eqref{fxdste5swu5} immediately follows, see e.g.\ \cite[p.~61]{Domanski}. By using the definition of the projective limit topology, we similarly conclude that formula \eqref{fstsw5u} holds. 
\end{proof}

Recall that, for a continuous function from $C(I;V)$ with a (sequentially) complete l.c.s.\ $V$, its Riemann integral is well defined, see e.g.\ \cite[Section~2]{Milnor}.

\begin{lemma}\label{dtstesrew5}

 Let $\Phi$ and $\Psi$ be as in Lemma~\ref{vcyre6i}.
 
 (i) Let $A(\cdot)\in C([0, 1];\mathcal L(\Phi, \Psi))$. Then, the function $[0, 1]\ni t\mapsto \int_0^t A(r)\, dr\in \mathcal L(\Phi, \Psi)$ is differentiable and has derivative $A(t)$. 

(ii)~Let $I\subset\R$ be a closed interval that contains $0$,  let $A(\cdot)\in C(I;\mathcal L(\Phi, \Psi))$ and $B_0\in \mathcal L(\Phi, \Psi)$. Then, there exists a unique curve $B(\cdot)\in C^1(I;\mathcal L(\Phi, \Psi))$ that solves the Cauchy problem 
$B'(t)=A(t)$ ($t\in I$), $B(0)=B_0$, and furthermore $B(t)=B_0+\int_0^t A(r)\, dr$. 
\end{lemma}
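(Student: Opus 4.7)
The plan is to exploit the projective limit structure $\mathcal{L}(\Phi, \Psi) = \prlim_{n \to \infty} \mathcal{L}(\Phi_n, \Psi)$ to reduce everything to the classical fundamental theorem of calculus in the Banach spaces $\mathcal{L}(\Phi_n, \Psi_m)$, then lift back up. Since $\mathcal{L}(\Phi, \Psi)$ is complete (as noted in the text above the lemma), the Riemann integral $\int_0^t A(r)\, dr$ is well-defined in $\mathcal{L}(\Phi, \Psi)$ for each $t$.

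For part (i), the projective limit topology on $\mathcal{L}(\Phi, \Psi)$ is the initial topology generated by the continuous restriction maps $\rho_n: A \mapsto A \restriction_{\Phi_n}$, so it suffices to check differentiability after applying each $\rho_n$. Since $\rho_n$ is linear and continuous, it commutes with the Riemann integral, giving $\rho_n\bigl(\int_0^t A(r)\, dr\bigr) = \int_0^t \rho_n(A(r))\, dr$. By Lemma~\ref{vcrswu5} (formula \eqref{fxdste5swu5}), the curve $\rho_n \circ A(\cdot)$ lies in $C([0,1];\mathcal{L}(\Phi_n, \Psi_m))$ for some $m = m(n)$, i.e.\ it is continuous into the genuine Banach space $\mathcal{L}(\Phi_n, \Psi_m)$. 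The classical fundamental theorem of calculus for Banach-space-valued continuous curves then yields $\frac{d}{dt}\int_0^t \rho_n(A(r))\, dr = \rho_n(A(t))$ in $\mathcal{L}(\Phi_n, \Psi_m)$, and this differentiability transfers to $\mathcal{L}(\Phi_n, \Psi)$ via the continuous inclusion and then to $\mathcal{L}(\Phi, \Psi)$ via the projective limit.

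For part (ii), existence is immediate from (i): $B(t) := B_0 + \int_0^t A(r)\, dr$ satisfies $B(0) = B_0$ and $B'(t) = A(t)$, and since $A(\cdot)$ is continuous, $B \in C^1(I; \mathcal{L}(\Phi, \Psi))$. For uniqueness, if $\widetilde B$ is another solution, then $C := \widetilde B - B \in C^1(I; \mathcal{L}(\Phi, \Psi))$ satisfies $C'(t) = 0$ and $C(0) = 0$. Composing with any continuous linear functional $\phi$ on $\mathcal{L}(\Phi, \Psi)$, the scalar curve $\phi \circ C$ is $C^1$ with identically zero derivative and $\phi \circ C(0) = 0$, hence $\phi \circ C \equiv 0$ by the classical scalar result. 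Since $\mathcal{L}(\Phi, \Psi)$ is a Hausdorff locally convex space, continuous linear functionals separate points by Hahn--Banach, so $C \equiv 0$ and $\widetilde B = B$.

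The only point requiring care is the interchange of integration/differentiation with the projective-limit structure; this is handled cleanly because the compact regularity encoded in Lemma~\ref{vcrswu5} guarantees that each continuous curve into the inductive limit $\mathcal{L}(\Phi_n, \Psi) = \indlim_m \mathcal{L}(\Phi_n, \Psi_m)$ is actually a continuous curve into a single Banach step $\mathcal{L}(\Phi_n, \Psi_m)$, so no infinite-dimensional calculus beyond the Banach case is required.
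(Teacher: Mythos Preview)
Your proposal is correct and follows essentially the same route as the paper: part (i) is reduced, via Lemma~\ref{vcrswu5}, to the classical Banach-space fundamental theorem of calculus after restricting along each $\rho_n$, and part (ii) is obtained from (i) together with a standard uniqueness argument (the paper simply cites \cite[Assertion~3.2]{Milnor} for the latter, whereas you spell out the Hahn--Banach separation).
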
 

\begin{proof}

 (i)~By~Lemma~\ref{vcrswu5}, for each $n\in\mathbb N$, there exists $m\in\mathbb N$ such that, for all $t\in[0, 1]$, the restriction $A(t)\restriction_{\Phi_n}$ belongs to $\mathcal L(\Phi_n, \Psi_m)$, and the curve $[0, 1]\ni t\mapsto A(t)\restriction_{\Phi_n}\in \mathcal L(\Phi_n, \Psi_m)$ is continuous. Therefore, $\int_0^t (A(r)\restriction_{\Phi_n})\, dr$ is the Riemann integral of a function with values in the Banach space $\mathcal L(\Phi_n, \Psi_m)$. This implies the statement, cf.\ \cite[1.11~Remark]{Schmeding}.

 (ii) This follows from part (i) and \cite[Assertion~3.2]{Milnor}.
\end{proof}

\subsection{Injective tensor product}\label{vcftstwe6}
For Banach spaces $B_1$ and $B_2$, we denote by $B_1\otimes B_2$ the (algebraic) tensor product of~$B_1$ and $B_2$, e.g.\ \cite[Section~1.1]{Ryan}. Let $B_1\hat\otimes B_2$ denote the completion of $B_1\otimes B_2$ in the injective norm, \footnote{The injective tensor product is usually denoted by $\hat\otimes_\varepsilon$, however we drop the lower index $\varepsilon$ since we will not use any other completions of $B_1\otimes B_2$ in this paper.} see e.g.\ \cite[Section~3.1]{Ryan}. Namely, for any $u=\sum_{i=1}^n b_{i1}\otimes b_{i2}\in B_1\otimes B_2$, the injective norm of $u$ is given by
\begin{align}
\|u\|_{B_1\hat\otimes B_2}&=\sup\bigg{\{}\bigg|\sum_{i=1}^n\omega_1(b_{i1})\omega_2(b_{i2})\bigg|: \omega_i\in B_i', \ \|\omega_i\|_{B_i'}\le1, \ i=1, 2\bigg\}\label{cxts5tws5w}\\
&=\sup\bigg{\{}\bigg{\|}\sum_{i=1}^nb_{i1}\, \omega_2(b_{i2})\bigg{\|}_{B_1}: \omega_2\in B_2', \ \|\omega_2\|_{B_2'}\le1\bigg{\}}\label{vyre7ki7tr8eid}\\
&=\sup\bigg{\{}\bigg{\|}\sum_{i=1}^n\omega_1(b_{i1})b_{i2}\bigg{\|}_{B_2}: \omega_1\in B_1', \ \|\omega_1\|_{B_1'}\le1\bigg{\}}.\label{hgcdyjr78}
\end{align}
It easily follows from \eqref{cxts5tws5w}--\eqref{hgcdyjr78} that the injective tensor product is associative, and so we can similarly consider the injective tensor product of several Banach spaces.

By e.g.\ \cite[Proposition~3.1]{Ryan}, we have, for any $b_1\in B_1$ and $b_2\in B_2$, 
$$\|b_1\otimes b_2\|_{B_1\hat\otimes B_2}=\|b_1\|_{B_1}\|b_2\|_{B_2}, $$
and for any $\omega_1\in B_1'$ and $\omega_2\in B_2'$, we have $\omega_1\otimes\omega_2\in B_1'\otimes B_2'\subset (B_1\hat\otimes B_2)'$ with
\begin{equation}\label{draze4tqa}
\|\omega_1\otimes\omega_2\|_{(B_1\hat\otimes B_2)'}=\|\omega_1\|_{B_1'}\|\omega_2\|_{B_2'}.\end{equation}
 
 By e.g.\ \cite[Proposition~3.2]{Ryan}, for Banach spaces $B_{ij}$ ($i, j\in\{1, 2\}$) and continuous linear operators $A_i\in\mathcal L(B_{i1}, B_{i2})$, there exists a unique continuous linear operator $A_1 \otimes A_2\in\mathcal L(B_{11}\hat\otimes B_{21}, B_{12}\hat\otimes B_{22})$ such that $(A_1\otimes A_2)(b_1\otimes b_2)=(A_1b_1)\otimes(A_2b_2)$ for all $b_1\in B_{11}$ and $b_2\in B_{21}$. Furthermore, we have
 \begin{equation*}
 \|A_1\otimes A_2\|_{\mathcal L(B_{11}\hat\otimes B_{21}, B_{12}\hat\otimes B_{22})}=\|A_1\|_{\mathcal L(B_{11}, B_{12})} \|A_2\|_{\mathcal L(B_{21}, B_{22})}.
 \end{equation*}

The injective tensor product respects subspaces, see e.g.\ \cite[p.~47]{Ryan}. More precisely, if $C_i$ is a closed subspace of the Banach space $B_i$ ($i=1, 2$), then $C_1\hat\otimes C_2$ is a closed subspace of $B_1\hat\otimes B_2$.

Let $B$ be a Banach space and $k\in \mathbb N$. We define a continuous linear operator $\operatorname{Sym}_k\in\mathcal L(B^{\hat\otimes k})$ by
$$\operatorname{Sym}_k(b_1\otimes b_2\otimes\dots\otimes b_k):=\frac1{k!}\sum_{\pi\in S_k}b_{\pi(1)}\otimes b_{\pi(2)}\otimes\dots\otimes b_{\pi(k)}, \quad b_1, b_2, \dots, b_k\in B, $$
where $S_k$ is the symmetric group of order $k$. Since 
the norm of this operator is obviously bounded by 1 and $\operatorname{Sym}_kb^{\otimes k}=b^{\otimes k}$ for all $b\in B$, the norm of $\operatorname{Sym}_k$ is equal to 1. We define $B^{\hat\odot k}$, the {\it $k$th symmetric injective tensor power of $B$}, as the range of $\operatorname{Sym}_k$. Since $\operatorname{Sym}_k^2=\operatorname{Sym}_k$ on $B^{\hat\otimes k}$, we easily see that
 $B^{\hat\odot k}$ is a closed subspace of $B^{\hat\otimes k}$. We denote
\begin{equation*}
b_1\odot b_2\odot\dots\odot b_k:=\operatorname{Sym}_k(b_1\otimes b_2\otimes\dots\otimes b_k).\end{equation*}
It follows from the polarization identity that $B^{\hat\odot k}$ is equal to the closure of the linear span of vectors of the form $b^{\otimes k}=b^{\odot k}$ with $b\in B$. We will also denote $B^{\hat\otimes0}=B^{\hat\odot 0}:=\mathbb C$ and $\operatorname{Sym}_0:=\mathbf 1$, the identity function in $\mathbb C$.

For any Banach spaces $B_1$ and $B_2$, any $i_1, i_2\in\mathbb N$, $j_1, j_2\in\mathbb N_0$, and $A_1\in\mathcal L(B_1^{\hat\odot i_1}, B_2^{\hat\odot j_1})$, $A_2\in\mathcal L(B_1^{\hat\odot i_2}, B_2^{\hat\odot j_2})$, the symmetric tensor product of $A_1$ and $A_2$ is defined as 
$$A_1\odot A_2\in\mathcal L(B_1^{\hat\odot(i_1+i_2)}, B_2^{\hat\odot(j_1+j_2)}), \quad A_1\odot A_2:=\operatorname{Sym}_{j_1+j_2}\bigg((A_1 \otimes A_2)\restriction_{B_1^{\hat\odot(i_1+i_2)}}\bigg).$$ 
As easily seen, $A_1\odot A_2=A_2\odot A_1$. Furthermore, this symmetric tensor product is obviously associative.

Let $\Phi=\indlim_{n\to\infty}\Phi_n$ be an (LB)-space. Then, for a fixed $k\in\mathbb N$, the Banach space $\Phi_n^{\hat\odot k}$ is a subspace of $\Phi_{n+1}^{\hat\odot k}$ and the restriction of the norm in $\Phi_{n+1}^{\hat\odot k}$ to $\Phi_n^{\hat\odot k}$ coincides with the norm in $\Phi_n^{\hat\odot k}$. Hence, we define the (LB)-space 
\begin{equation}\label{xreaWQQ}\Phi^{\hat\odot k}:=\indlim_{n\to\infty}\Phi_n^{\hat\odot k}.\end{equation}
We similarly define the (LB)-space $\Phi^{\hat\otimes k}$. We will denote $\Phi^{\hat\otimes0}=\Phi^{\hat\odot 0}:=\mathbb C$.

\begin{remark}\label{cfsthewdd}
Let $A\in\mathcal L(\Phi^{\hat\odot k}, \Phi^{\hat\odot l})$. It follows from Lemma~\ref{vcyre6i} and the definition of $\Phi^{\hat\odot k}$ that the operator $A$ is uniquely characterized by its values on the set $\{\xi^{\otimes k}\mid\xi\in\Phi\}$.
\end{remark}

Lemma~\ref{vcyre6i} allows us to define, for any $A_1\in\mathcal L(\Phi^{\hat\odot i_1}, \Phi^{\hat\odot j_1})$ and $A_2\in\mathcal L(\Phi^{\hat\odot i_2}, \Phi^{\hat\odot j_2})$, their symmetric tensor product $A_1\odot A_2\in\mathcal L(\Phi^{\hat\odot (i_1+i_2)}, \Phi^{\hat\odot (j_1+j_2)})$.

Recall Remark~\ref{seiytr8o5w5} and formula \eqref{draze4tqa}. For each $k\in\mathbb N$ and $\omega\in\Phi'$, we have 
$\omega^{\otimes k}\in(\Phi_n^{\hat\otimes k})'$ for all $n\in\mathbb N$, and so $\omega^{\otimes k}\in(\Phi_n^{\hat\odot k})'$ for all $n$. Hence, by \eqref{xreaWQQ} and e.g.\ \ \cite[Ch.~II, \S~6.1]{Schaefer}, $\omega^{\otimes k}\in (\Phi^{\hat\odot k})'$.

For $f^{(k)}$ from $\Phi^{\hat\odot k}$ (or $\Phi^{\hat\otimes k}$) and 
$\psi^{(k)}$ from $(\Phi^{\hat\odot k})'$ (or $(\Phi^{\hat\otimes k})'$), we will denote by either $\langle\psi^{(k)}, f^{(k)}\rangle$ or just $\psi^{(k)}f^{(k)}$
the dual pairing between $\psi^{(k)}$ and $f^{(k)}$. (Thus, $\langle\cdot, \cdot\rangle$ is a bilinear form on $(\Phi^{\hat\odot k})'\times \Phi^{\hat\odot k}$, or on $(\Phi^{\hat\otimes k})'\times \Phi^{\hat\otimes k}$.) In particular, for $\omega\in\Phi'$ and $f^{(k)}\in \Phi^{\hat\odot k}$, the dual pairing 
$\langle\omega^{\otimes k}, f^{(k)}\rangle$ is well-defined.

\begin{lemma}\label{fsrea45y}
Let $f^{(k)}\in \Phi^{\hat\odot k}$ be such that $\langle\omega^{\otimes k}, f^{(k)}\rangle=0$ for all $\omega\in\Phi'$. Then $f^{(k)}=0$.
\end{lemma}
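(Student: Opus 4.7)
\emph{Plan.} The content of the lemma is that the ``diagonal'' functionals $\omega^{\otimes k}$, $\omega\in\Phi'$, separate points of $\Phi^{\hat\odot k}$. I would prove this in three steps, combining Hahn--Banach with the usual polarization identity.

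First, reduce to the Banach level. By \eqref{xreaWQQ} and the fact that the topology $\Phi^{\hat\odot k}$ induces on each $\Phi_n^{\hat\odot k}$ coincides with the original Banach topology, the hypothesis $f^{(k)}\in\Phi^{\hat\odot k}$ implies $f^{(k)}\in\Phi_n^{\hat\odot k}\subset\Phi_n^{\hat\otimes k}$ for some $n\in\mathbb N$, and it suffices to prove $f^{(k)}=0$ inside the Banach space $\Phi_n^{\hat\otimes k}$. Iterating the formulas \eqref{cxts5tws5w}--\eqref{hgcdyjr78} for the injective norm, one obtains
\[
\|f^{(k)}\|_{\Phi_n^{\hat\otimes k}}=\sup\bigl\{|\langle\omega_1\otimes\cdots\otimes\omega_k,\,f^{(k)}\rangle|:\omega_i\in\Phi_n',\ \|\omega_i\|_{\Phi_n'}\le 1\bigr\},
\]
so it is enough to show that $\langle\omega_1\otimes\cdots\otimes\omega_k,\,f^{(k)}\rangle=0$ for every choice of $\omega_1,\dots,\omega_k\in\Phi_n'$.

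Second, combine symmetry with polarization. Since $f^{(k)}$ is symmetric, the $k$-linear form $T(\omega_1,\dots,\omega_k):=\langle\omega_1\otimes\cdots\otimes\omega_k,f^{(k)}\rangle$ on $\Phi_n'$ is symmetric in its arguments, so the polarization identity gives
\[
T(\omega_1,\dots,\omega_k)=\frac1{k!\,2^k}\sum_{\varepsilon_1,\dots,\varepsilon_k\in\{\pm1\}}\varepsilon_1\cdots\varepsilon_k\,\bigl\langle(\varepsilon_1\omega_1+\cdots+\varepsilon_k\omega_k)^{\otimes k},\,f^{(k)}\bigr\rangle.
\]
Third, invoke Hahn--Banach. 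By the locally convex Hahn--Banach theorem applied in $\Phi$ (whose induced topology on $\Phi_n$ is the norm topology), each $\omega_i\in\Phi_n'$ extends to some $\tilde\omega_i\in\Phi'$; see also Remark~\ref{seiytr8o5w5}. Then $\tilde\omega:=\varepsilon_1\tilde\omega_1+\cdots+\varepsilon_k\tilde\omega_k\in\Phi'$ restricts on $\Phi_n$ to $\omega:=\varepsilon_1\omega_1+\cdots+\varepsilon_k\omega_k$, and therefore on $\Phi_n^{\hat\odot k}$ one has $\langle\omega^{\otimes k},f^{(k)}\rangle=\langle\tilde\omega^{\otimes k},f^{(k)}\rangle$, which is $0$ by hypothesis. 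Every term on the right-hand side of the polarization identity vanishes, so $T(\omega_1,\dots,\omega_k)=0$; taking the supremum gives $\|f^{(k)}\|_{\Phi_n^{\hat\otimes k}}=0$, and hence $f^{(k)}=0$.

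The only point that is not essentially bookkeeping is the Hahn--Banach extension step, which is what lets us upgrade ``vanishes on diagonals $\omega^{\otimes k}$ with $\omega\in\Phi'$'' to ``vanishes on diagonals $\omega^{\otimes k}$ with $\omega\in\Phi_n'$''. The polarization identity and the injective-norm characterization are standard and purely formal.
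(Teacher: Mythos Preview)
Your argument is correct and follows essentially the same route as the paper's proof: reduce to $\Phi_n^{\hat\odot k}$, identify the injective norm with the norm of the associated symmetric $k$-linear form on $(\Phi_n')^k$, and use polarization to conclude. The only difference is that you spell out explicitly the Hahn--Banach extension needed to pass from the hypothesis ``$\langle\omega^{\otimes k},f^{(k)}\rangle=0$ for all $\omega\in\Phi'$'' to ``for all $\omega\in\Phi_n'$'', a step the paper leaves implicit.
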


\begin{proof}
 Recall \eqref{xreaWQQ}. Choose $n\in\mathbb N$ such that $f^{(k)}\in\Phi_n^{\hat\odot k}$. 
 It easily follows from \eqref{cxts5tws5w} (see also \cite[p.~45]{Ryan}) that the norm $\|f^{(k)}\|_{\Phi_n^{\hat \otimes k}}$ is equal to the norm of the $k$-linear form $b$ on $(\Phi_n')^k$ defined by
 $b(\omega_1, \dots, \omega_k):=\langle\omega_1\otimes\dots\otimes \omega_k, f^{(k)}\rangle$ for $\omega_1, \dots, \omega_k\in\Phi_n'$.
 Since $f^{(k)}$ belongs to the $k$th symmetric tensor power of $\Phi_n$, the form $b$ is symmetric. Hence, by the polarization identity, the form $b$ is completely determined by the values 
 $b(\omega, \dots, \omega)=\langle \omega^{\otimes k}, f^{(k)}\rangle$, $\omega\in \Phi_n'$.
But by our assumption, $b(\omega, \dots, \omega)=0$ for all $\omega\in\Phi_n'$, and so the form $b$ is identically equal to zero. This in turn implies that $\|f^{(k)}\|_{\Phi_n^{\hat\otimes k}}=0$. 
\end{proof}

\subsection{A class of examples}\label{vcts5ywiyu}

Let us now discuss a class of examples of the constructions in Subsections~\ref{vctes56u7} and~\ref{vcftstwe6}.

 Let $X$ be a locally compact topological space. Furthermore, we assume that the space $X$ is countable at infinity, i.e., $X$ is the union of a sequence of relatively compact open sets $U_n$ such that $\overline U_n\subset U_{n+1}$. 
 Let $\Phi_n$ denote the Banach space of all continuous functions $f:X\to\mathbb C$ that vanish outside $\overline U_n$, with $\|f\|_{\Phi_n}:=\sup_{x\in X}|f(x)|$. Let $C_\mathrm c(X;\mathbb C)$ denote the space of all continuous functions $f:X\to\mathbb C$ with compact support. We equip $C_\mathrm c(X;\mathbb C)$ with the coarsest locally convex topology such that $f_n\to f$ in $C_\mathrm c(X;\mathbb C)$ if and only if there exists a compact set $\mathcal K$ in $X$ such that all functions $f_n$ vanish outside~$\mathcal K$ and $f_n\to f$ uniformly on $X$. Then $C_\mathrm c(X;\mathbb C)$ is the inductive limit of the $\Phi_n$ spaces, see e.g.\ \cite[Ch.~III, \S1, Sect.~1]{Bourbaki}. Thus, $\Phi:=C_\mathrm c(X;\mathbb C)$ is an (LB)-space.

 Let $\mathcal B(X)$ denote the Borel $\sigma$-algebra on $X$ and let $\mathcal B_0(X)$ denote the collection of all precompact sets from $\mathcal B(X)$. Recall that a positive Radon measure $\sigma$ on $(X, \mathcal B(X))$ is a measure satisfying $\sigma(A)<\infty$ for all $A\in\mathcal B_0(X)$, and a complex-valued Radon measure $\omega$ on $X$ has the form $\omega=(\omega_1-\omega_2)+i(\omega_3-\omega_4)$, where $\omega_1, \omega_2, \omega_3, \omega_4$ are positive Radon measures. Note that, generally speaking, a complex-valued Radon measure $\omega$ is well-defined only on $\mathcal B_0(X)$. We denote by $M(X;\mathbb C)$ the vector space of all complex-valued Radon measures on $X$. This space can be identified with $\Phi'$, the dual space of $\Phi$. The dual pairing between 
 $\omega\in \Phi'=M(X;\mathbb C)$ and $f\in \Phi=C_\mathrm c(X;\mathbb C)$ is given by $\langle\omega, f\rangle=\int_X f\, d\omega$, see e.g.\ \cite[Ch.~III, \S1, Sects.~3 \&\ 5]{Bourbaki} and 
 \cite[\S 29]{Bauer}.

 Similarly to \cite[Section~3.2]{Ryan}, one can show that, for all $k\ge2$ and $n\in\mathbb N$, $\Phi_n^{\hat\otimes k}$ is the space of all continuous functions $f^{(k)}:X^k\to\mathbb C$ that vanish outside $(\overline U_n)^k$, with $\|f^{(k)}\|_{\Phi_n^{\hat\otimes k}}=\sup_{(x_1, \dots, x_k)\in X^k}|f^{(k)}(x_1, \dots, x_k)|$. Hence, $\Phi^{\hat\otimes k}=C_\mathrm c(X^k;\mathbb C)$. One can easily conclude that $\Phi^{\hat\odot k}$ is the subspace of $C_\mathrm c(X^k;\mathbb C)$ that consists of all symmetric functions $f^{(k)}$ from $C_\mathrm c(X^k;\mathbb C)$. This also implies that, for each $\omega\in\Phi'=M(X;\mathbb C)$ and $f^{(k)}\in\Phi^{\hat\odot k}$, we have 
 $\langle\omega^{\otimes k}, f^{(k)}\rangle=\int_{X^k}f^{(k)}\, d\omega^{\otimes k}$.

\section{Lie structures on formal tensor power series}\label{vgydi67r}
Everywhere below $\Phi=\indlim_{n\to\infty}\Phi_n$ is an (LB)-space.
\subsection{Formal tensor power series}

Recall Remark \ref{cfsthewdd}. 
Let $i\in\mathbb N_0$. A {\it formal tensor power series on $\Phi$ with values in $\Phi^{\hat\odot i}$} is a formal series of the form $C(\xi)=\sum_{k=i}^\infty C_{k}\xi^{\otimes k}$ for $\xi\in\Phi$, where $(C_{k})_{k\ge i}$ is an arbitrary sequence of operators $C_{k}\in\mathcal L(\Phi^{\hat\odot k}, \Phi^{\hat\odot i})$. We denote by
$\mathcal F(\Phi;\Phi^{\hat\odot i})$ the set of all formal tensor power series on $\Phi$ with values in $\Phi^{\hat\odot i}$. We equip $\mathcal F(\Phi;\Phi^{\hat\odot i})$ with the topology of the topological product of the spaces $\mathcal L(\Phi^{\hat\odot k}, \Phi^{\hat\odot i})$ with $k\ge i$. Hence, $\mathcal F(\Phi;\Phi^{\hat\odot i})$ is a complete l.c.s., see e.g.\ \cite[Ch.~II, \S~5.2 and 5.3]{Schaefer}.

\begin{remark}\label{bydtrsw5w}
 Note that for $s\in\mathbb C$, and $C(\xi)=\sum_{k=i}^\infty C_{k}\xi^{\otimes k}\in \mathcal F(\Phi;\Phi^{\hat\odot i})$, we have
$C(s\xi)=\sum_{k=i}^\infty C_{k}(s\xi)^{\otimes k}=\sum_{k=i}^\infty s^k\, C_{k}\xi^{\otimes k}$, 
where on the right-hand side we have a formal power series in $s$ with coefficients from $\Phi^{\hat\odot i}$. 
\end{remark}

\begin{remark}\label{cfta4a4zz} Let $C_k\in\mathcal L(\Phi^{\hat\odot k}, \Phi^{\hat\odot i})$. Then, for each $\omega\in\Phi'$, we obviously have $\langle\omega^{\otimes i}, C_k\, \cdot\rangle\in(\Phi^{\hat\odot k})'$. Therefore, for $C(\xi)=\sum_{k=i}^\infty C_{k}\xi^{\otimes k}\in \mathcal F(\Phi;\Phi^{\hat\odot i})$ and $\omega\in\Phi'$, we may define $\langle \omega^{\otimes i}, C(\xi)\rangle:= \sum_{k=i}^\infty \langle\omega^{\otimes i}, C_k\xi^{\otimes k}\rangle\in\mathcal F(\Phi;\mathbb C)$. It follows from Lemma~\ref{fsrea45y} that the formal tensor power series $\big(\langle \omega^{\otimes i}, C(\xi)\rangle\big)_{\omega\in\Phi'}$ uniquely characterize $C(\xi)$.
\end{remark}

For formal tensor power series $A^{(j)}(\xi)=\sum_{k=0}^\infty A^{(j)}_k\xi^{\otimes k}\in\mathcal F(\Phi;\mathbb C)$ ($j=1, 2$), we define their {\it (multiplicative) product}
\begin{equation}\label{cfrtsw5yw}
A^{(1)}(\xi)A^{(2)}(\xi)
:=\sum_{k=0}^\infty\sum_{l=0}^\infty \big(A^{(1)}_k\odot A^{(2)}_l\big)\xi^{\otimes(k+l)}=\sum_{k=0}^\infty A_k\xi^{\otimes k}\in\mathcal F(\Phi;\mathbb C), \end{equation}
 where 
 \begin{equation}\label{bftyde6w}
A_0=A_0^{(1)}A_0^{(2)}, \quad A_k=\sum_{l=0}^k A^{(1)}_l\odot A^{(2)}_{k-l}\quad\text{for }k\in\mathbb N.
\end{equation}

More generally, for $i, j\in\mathbb N_0$ and formal tensor power series $B(\xi)=\sum_{k=i}^\infty B_k\xi^{\otimes k}\in\mathcal F(\Phi, \Phi^{\hat\odot i})$ and $C(\xi)=\sum_{l=j}^\infty C_l\xi^{\otimes l}\in\mathcal F(\Phi, \Phi^{\hat\odot j})$, we define their {\it symmetric tensor product}
\begin{equation}\label{cxdtsrea5}
B(\xi)\odot C(\xi)=\sum_{k=i+j}^\infty D_k\xi^{\otimes k}\in\mathcal F(\Phi;\Phi^{\hat\odot (i+j)}), 
\end{equation} where 
\begin{equation}\label{vcftse5a5ywu}D_k:=\sum_{l=i}^{k-j} B_l\odot C_{k-l}.
\end{equation}
If $i=j=0$, then the symmetric tensor product is just the multiplicative product. Furthermore, in the case where only one of $i$ and $j$ is equal to zero, say $i=0$, the formal tensor power series $B(\xi)$ becomes scalar-valued, hence we will still write $B(\xi)C(\xi)$ instead of $B(\xi)\odot C(\xi)$.

For $C(\xi)=\sum_{k=i}^\infty C_{k}\xi^{\otimes k}\in \mathcal F(\Phi;\Phi^{\hat\odot i})$ ($i\in\mathbb N_0$) and $B(\xi)=\sum_{k=1}^\infty B_k\xi^{\otimes k}\in\mathcal F(\Phi, \Phi)$, we define the {\it composition (substitution)}
\begin{equation}
(C\circ B)(\xi)=C(B(\xi)):=\sum_{k=i}^\infty C_k \bigg(\sum_{l=1}^\infty B_l\xi^{\otimes l}\bigg)^{\otimes k}
=\sum_{k=i}^\infty D_k\xi^{\otimes k}\in \mathcal F(\Phi;\Phi^{\hat\odot i}), \label{vcxdeay543q}
\end{equation}
where $D_0=C_0$ if $i=0$, and for $k\ge\max\{1, i\}$, we have
\begin{equation}\label{crtsdtrsew}
D_k=\sum_{l=1}^k C_l\sum_{\substack{i_1, \dots, i_l\in\mathbb N\\i_1+\dots+i_l=k}}B_{i_1}\odot B_{i_2}\odot\cdots\odot B_{i_l}.\end{equation}
Sometimes it will be convenient to write the composition in \eqref{vcxdeay543q} as $C(\xi)\circ B(\xi)$.

Similarly, for each (formal) power series of a complex argument, $q(z)=\sum_{k=0}^\infty q_kz^k$ ($q_k\in\mathbb C$), and $A(\xi)=\sum_{k=1}^\infty A_k\xi^{\otimes k}\in\mathcal F(\Phi;\mathbb C)$, we define the {\it composition} 
\begin{equation}\label{cxrsa4wq4}
q(A(\xi))=q_0+\sum_{k=1}^\infty q_k\bigg(\sum_{l=1}^\infty A_l\xi^{\otimes l}\bigg)^k
=q_0+\sum_{k=1}^\infty B_k\xi^{\otimes k}\in\mathcal F(\Phi;\mathbb C), 
\end{equation}
where 
\begin{equation}\label{xra4waq}
B_k=\sum_{l=1}^kq_l\sum_{\substack{i_1, \dots, i_l\in\mathbb N\\i_1+\dots+i_l=k}}A_{i_1}\odot\cdots\odot A_{i_l}.
\end{equation}

Let $C_k\in\mathcal L(\Phi^{\hat\odot k}, \Phi^{\hat\odot i})$ ($k\ge i$) and consider the function $\Phi\ni\xi\mapsto C_k\xi^{\otimes k}\in\Phi^{\hat\odot i}$. 
For a fixed $\zeta\in\Phi$, the G\^ateaux derivative of this function in direction $\zeta$ at $\xi$ is given by
$$ D_\zeta(C_k\xi^{\otimes k}):=\lim_{t\to0}\frac1t\big(C_k(\xi+t\zeta)^{\otimes k}-C_k\xi^{\otimes k}\big)=kC_k(\xi^{\odot(k-1)}\odot\zeta).$$
Note that $C_k(\cdot\odot\zeta)\in\mathcal L(\Phi^{\hat\odot(k-1)}, \Phi^{\hat\odot i})$. In view of this, for $\zeta\in\Phi$ and $C(\xi)=\sum_{k=i}^\infty C_k\xi^{\otimes k}\in\mathcal F(\Phi;\Phi^{\hat\odot i})$, we define the {\it $ D_\zeta$\,-derivative of $C(\xi)$} as\footnote{Note that the summation on the right-hand side of formula \eqref{cxtrswu5e4i6} starts with $i-1$. Hence, for $i\ge1$, $ D_\zeta C(\xi)$ belongs to $\mathcal F(\Phi;\Phi^{\hat\odot i})$ only if $C_i=0$.}
\begin{equation}\label{cxtrswu5e4i6}
 D_\zeta C(\xi):=\sum_{k=i}^\infty kC_{k}(\xi^{\odot (k-1)}\odot\zeta)=\sum_{k=i-1}^\infty(k+1)C_{k+1}(\xi^{\odot k}\odot\zeta).\end{equation}

Let $B(\xi)=\sum_{k=1}^\infty B_k\xi^{\otimes k}\in\mathcal F(\Phi;\Phi)$ and $C(\xi)=\sum_{k=i}^\infty C_k\xi^{\otimes k}\in\mathcal F(\Phi;\Phi^{\hat\odot i})$. 
Substituting in formula~\eqref{cxtrswu5e4i6} the formal tensor power series $B(\xi)$ for $\zeta$, 
 we define $D_{B}C(\xi)$, the {\it derivative of $C(\xi)$ in direction $B(\xi)$}, as an element of $\mathcal F(\Phi;\Phi^{\hat\odot i})$. Indeed, similarly to formulas~\eqref{vcxdeay543q}, \eqref{crtsdtrsew}, we calculate $D_{B}C(\xi)$ as follows:
\begin{align}
 D_{B}C(\xi)&:=\sum_{k=i}^\infty kC_{k}(\xi^{\otimes (k-1)}\odot B(\xi))=\sum_{k=i}^\infty\sum_{l=1}^\infty kC_k\big(\xi^{\otimes(k-1)}\odot (B_l\xi^{\otimes l})\big)
 \notag\\&=\sum_{k=i}^\infty\sum_{l=1}^\infty kC_k\big(\mathbf 1_{k-1}\odot B_l\big)\xi^{\otimes(k-1+l)}=\sum_{k=i}^\infty D_k\xi^{\otimes k}, \label{cfdst}
 \end{align}
where
\begin{equation*}
D_k=\sum_{l=i}^{k}l\, C_{l}(\mathbf 1_{l-1}\odot B_{k-l+1}), 
\end{equation*}
and $\mathbf 1_l$ is the identity operator in $\Phi^{\hat\odot l}$.

We denote by $\mathcal F_0(\Phi)$ the set of all formal tensor power series 
$\sum_{k=0}^\infty A_k\xi^{\otimes k}\in \mathcal F(\Phi;\mathbb C)$, with $A_0=1$. We denote by $\mathcal F_1(\Phi)$ the set of all formal tensor power series 
$\sum_{k=1}^\infty B_k\xi^{\otimes k}\in \mathcal F(\Phi;\Phi)$, 
with $B_1=\mathbf 1$, the identity operator in $\Phi$. 

\subsection{The $\mathcal F_0(\Phi)$ as a Lie group for multiplication}

We define the l.c.s.\ $\mathcal W_0(\Phi)$ as the topological product of all the spaces $(\Phi^{\hat\odot k})'$ with $k\in\mathbb N$. Since each space $(\Phi^{\hat\odot k})'$ is complete, $\mathcal W_0(\Phi)$ is complete, see e.g.\ \cite[Ch.~II, \S~5.2 and 5.3]{Schaefer}.

 With an abuse of notation, we will identify $\mathcal W_0(\Phi)$ with the subspace of $\mathcal F(\Phi;\mathbb C)$ that consists of all formal tensor power series $\alpha(\xi)=\sum_{k=1}^\infty \alpha_k\xi^{\otimes k}$ ($\alpha_k\in(\Phi^{\hat\odot k})'$).
The bijective map
\begin{equation}\label{biject0}
\mathcal W_0(\Phi)\ni \alpha(\xi)\mapsto 1+\alpha(\xi)\in \mathcal F_0(\Phi)
\end{equation}
determines a topology on $\mathcal F_0(\Phi)$
%, which makes it into a smooth manifold modeled on the l.c.s.\ $\mathcal W_0(\Phi)$ with a global parametrization \cite[Section~4]{Milnor}. Note that this topology on $\mathcal F_0(\Phi)$ 
which coincides with the topology 
%on $\mathcal F_0(\Phi)$ 
induced by that on $\mathcal F(\Phi;\mathbb C)$.

For the definition of a Lie group modeled on a complete l.c.s.\ and its Lie algebra, see Subsections~\ref{app:LieGroup}--\ref{app:LieAlgebra} in the Appendix.

\begin{proposition} \label{ccfgrag543w}
The $\mathcal F_0(\Phi)$ equipped with the multiplicative product of formal tensor power series is a commutative Lie group modeled on $\mathcal W_0(\Phi)$ through the global parametrization   \eqref{biject0}. The corresponding Lie algebra, hence, can be identified with $\mathcal W_0(\Phi)$.
\end{proposition}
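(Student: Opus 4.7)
The plan is to verify three claims in turn: (a) $\mathcal F_0(\Phi)$ is a commutative group under multiplication; (b) through the parametrization \eqref{biject0}, the group operations become analytic maps on $\mathcal W_0(\Phi)$; (c) the Lie algebra is identified with $\mathcal W_0(\Phi)$ equipped with the trivial bracket.

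For (a), I would first observe that, by \eqref{bftyde6w}, if $A^{(1)}, A^{(2)} \in \mathcal F_0(\Phi)$ then $(A^{(1)}A^{(2)})_0 = A_0^{(1)} A_0^{(2)} = 1$, so $\mathcal F_0(\Phi)$ is closed under the product. Associativity and commutativity follow from the corresponding properties of $\odot$ established in Subsection~\ref{vcftstwe6}. The constant series $1$ is a two-sided identity. For the inverse of $A(\xi) = 1 + \alpha(\xi)$ with $\alpha \in \mathcal W_0(\Phi)$, I would define $B(\xi) = 1 + \beta(\xi)$ recursively by imposing $(AB)_k = 0$ for all $k \ge 1$, which uniquely determines
\[
\beta_k = -\alpha_k - \sum_{l=1}^{k-1} \alpha_l \odot \beta_{k-l}, \quad k \ge 1.
\]
Each $\beta_k$ lies in $(\Phi^{\hat\odot k})'$ since the symmetric tensor product of continuous linear functionals is again continuous.

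For (b), through \eqref{biject0} multiplication corresponds to the map $m : \mathcal W_0(\Phi) \times \mathcal W_0(\Phi) \to \mathcal W_0(\Phi)$ given coordinate-wise by
\[
m(\alpha, \beta)_k = \alpha_k + \beta_k + \sum_{l=1}^{k-1} \alpha_l \odot \beta_{k-l},
\]
and inversion corresponds to the coordinate-wise polynomial recursion exhibited in the previous paragraph. Each coordinate is a polynomial of degree at most two in finitely many components of the arguments, hence analytic in the sense of Milnor by the results of Appendix~A applied to the continuous bilinear operation $\odot$ on duals. Since $\mathcal W_0(\Phi)$ carries the product topology of the spaces $(\Phi^{\hat\odot k})'$, the universal property of the projective topology lifts componentwise analyticity to analyticity of both $m$ and the inversion map into $\mathcal W_0(\Phi)$. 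Thus $\mathcal F_0(\Phi)$ becomes a Lie group modelled on $\mathcal W_0(\Phi)$ via the global chart \eqref{biject0}.

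For (c), by the constructions recalled in Appendix~B, the Lie algebra of a Lie group presented by a single global parametrization is naturally identified with its model space, here $\mathcal W_0(\Phi)$; commutativity of the group forces the Lie bracket to vanish. The main obstacle I anticipate is the rigorous verification of analyticity: namely, that the coordinate-wise polynomial maps are analytic in the infinite-dimensional sense of Milnor, and that componentwise analyticity into each $(\Phi^{\hat\odot k})'$ lifts to analyticity into the product l.c.s.\ $\mathcal W_0(\Phi)$. Both should be accessible via the analyticity lemmas in Appendix~A combined with the universal property of projective (product) topologies.
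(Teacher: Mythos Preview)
Your proposal is correct and follows essentially the same route as the paper: the inverse is constructed via the same coordinate-wise recursion, and analyticity of multiplication and inversion is reduced, via Lemma~\ref{tfy7e6e} (the product-topology lifting lemma), to the analyticity of finite polynomial expressions in the $\odot$-operation on duals, which is supplied by Lemma~\ref{cgtsrta}~(i). The only minor imprecision is that the inversion coordinates are polynomials of degree higher than two in the $\alpha_l$'s (the recursion composes quadratic steps), but this does not affect the argument since compositions of analytic maps remain analytic.
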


\begin{proof} To show that $\mathcal F_0(\Phi)$ is a group, we only need to check that, for each $A(\xi)=\sum_{k=0}^\infty A_k\xi^{\otimes k}\in\mathcal F_0(\Phi)$, there exists $\tilde A(\xi)=\sum_{k=0}^\infty \tilde A_k\xi^{\otimes k}\in \mathcal F_0(\Phi)$ such that $A(\xi)\tilde A(\xi)=1$. (Below we will denote $\tilde A(\xi)$ by $A^{-1}(\xi)$.) But \eqref{bftyde6w} implies that $\tilde A_k$ can be found from the recurrence formula 
\begin{equation}\label{vcter4y56}
\tilde A_0=1, \quad \tilde A_1=-A_1, \quad \tilde A_k=-A_k-\sum_{l=1}^{k-1}A_{k-l}\odot\tilde A_l\quad\text{for }k\ge2. 
\end{equation}

Next, we have to check that the maps
\begin{gather}
\mathcal F_0(\Phi)^2\ni(A^{(1)}(\xi), A^{(2)}(\xi))\mapsto A^{(1)}(\xi)A^{(2)}(\xi)\in \mathcal F_0(\Phi), \label{bvste6e6}\\
 \mathcal F_0(\Phi)\ni A(\xi)\mapsto A^{-1}(\xi)\in \mathcal F_0(\Phi)\label{vgtfdy6ew}
 \end{gather}
 are analytic. It follows from formulas~\eqref{cfrtsw5yw}, \eqref{bftyde6w} and 
Lemma~\ref{tfy7e6e} in the Appendix that, in order to prove the analyticity of the map in \eqref{bvste6e6}, it is sufficient to prove that, for each $k\in\mathbb N$, the following map is analytic:
$$\bigg(\prod_{l=1}^k (\Phi^{\hat\odot l})'\bigg)^2\ni(A_1^{(1)}, \dots, A_k^{(1)}, A_1^{(2)}, \dots, A_k^{(2)})\mapsto A_k^{(1)}+A_k^{(2)}+\sum_{l=1}^{k-1}A_l^{(1)}\odot A_{k-l}^{(2)}\in(\Phi^{\hat\odot k})'.$$
But this analyticity is a consequence of Lemma~\ref{cgtsrta} (i). The analyticity of the map in \eqref{vgtfdy6ew} follows similarly from \eqref{vcter4y56} if one additionally takes into account that the composition of analytic maps is analytic. Indeed, an induction argument shows that each 
$\tilde A_k$ ($k\ge 1$) in formula \eqref{vcter4y56} is an analytic function of $A_1, \dots, A_k$.
 \end{proof}

 For the definition of the exponential map for a Lie group, see Subsection~\ref{app:ExpMap} in the Appendix, and for the definition of an equivalent global parametrization of a Lie group, see Subsection~\ref{app:LieAlgebra} in the Appendix.

% For the definition of the exponential map for a Lie group, see \cite[Section~7]{Milnor}.

\begin{proposition}\label{cxtstew5}
The exponential map for $\mathcal F_0(\Phi)$ exists and is given by
\begin{equation}\label{crq4yxs}
\mathcal W_0(\Phi)\ni \alpha(\xi)\mapsto \exp(\alpha(\xi))=1+\sum_{n=1}^\infty\frac1{n!}\alpha(\xi)^n\in\mathcal F_0(\Phi).\end{equation}
This is a bijective map and its inverse is given by
\begin{equation}\label{xweqa4q42q}
\mathcal F_0(\Phi)\ni A(\xi)\mapsto\log(A(\xi))=\sum_{n=1}^\infty\frac{(-1)^{n+1}}n\, (A(\xi)-1)^n\in\mathcal W_0(\Phi).\end{equation}
The maps in formulas \eqref{crq4yxs} and \eqref{xweqa4q42q} are analytic; hence, the exponential map \eqref{crq4yxs} provides an equivalent global parametrization of $\mathcal F_0(\Phi)$.
\end{proposition}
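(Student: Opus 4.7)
First I would verify that both series define well-posed elements of the target spaces. For $\alpha(\xi)=\sum_{k\ge1}\alpha_k\xi^{\otimes k}\in\mathcal W_0(\Phi)$, the product $\alpha(\xi)^n$ has zero coefficient of $\xi^{\otimes k}$ whenever $k<n$, so the formal series $1+\sum_{n\ge1}\frac{1}{n!}\alpha(\xi)^n$ has, for each $k\in\mathbb N$, only finitely many nonzero contributions (from $n=1,\dots,k$); hence it defines an element of $\mathcal F_0(\Phi)$. Similarly, for $A(\xi)\in\mathcal F_0(\Phi)$ the tensor power series $A(\xi)-1$ has vanishing constant term, so $\sum_{n\ge1}\frac{(-1)^{n+1}}{n}(A(\xi)-1)^n$ is a well-defined element of $\mathcal W_0(\Phi)$.

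Next I would show that \eqref{crq4yxs} is the exponential map of the Lie group $\mathcal F_0(\Phi)$. Set $\gamma(t):=\exp(t\alpha(\xi))$, where by $t\alpha(\xi)$ we mean the series obtained by rescaling each $\alpha_k$ by $t$ (cf.\ Remark~\ref{bydtrsw5w}). Because multiplication in $\mathcal F_0(\Phi)$ is commutative and the truncation of $\gamma(t)$ at tensor degree $k$ is a polynomial in $t$, the binomial identity applied coefficient-wise yields $\gamma(s)\gamma(t)=\gamma(s+t)$ in $\mathcal F_0(\Phi)$; hence $\gamma$ is a one-parameter subgroup. Term-by-term differentiation of the truncations, which is legitimate since each $\xi^{\otimes k}$-coefficient is polynomial in $t$, shows $\gamma\in C^\infty(\R;\mathcal F_0(\Phi))$ and $\gamma'(0)=\alpha(\xi)$. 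By the characterization of the exponential map recalled in Appendix~B, this identifies $\gamma(1)$ with the exponential of $\alpha(\xi)$.

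Then I would verify that $\log$ is a two-sided inverse of $\exp$. For fixed $k\in\mathbb N$, the $\xi^{\otimes k}$-coefficient of $\log(\exp(\alpha(\xi)))$ and of $\exp(\log(A(\xi)))$ depends on only finitely many input components, and its evaluation uses exclusively the commutative symmetric tensor multiplication $\odot$. Hence each such coefficient reduces to a finite polynomial manipulation formally identical to the scalar identities $\log(\exp z)=z$ and $\exp(\log(1+z))=1+z$ in $\mathbb C[[z]]$. It follows that $\log\circ\exp=\mathrm{id}_{\mathcal W_0(\Phi)}$ and $\exp\circ\log=\mathrm{id}_{\mathcal F_0(\Phi)}$.

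Finally, I would prove analyticity of both maps following the pattern of Proposition~\ref{ccfgrag543w}: by Lemma~\ref{tfy7e6e}, it suffices to verify, for each $k$, that the $k$-th tensor coefficient of $\exp(\alpha)$ (respectively $\log(A)$) is an analytic function of $(\alpha_1,\dots,\alpha_k)$ (respectively $(A_1,\dots,A_k)$). This is immediate, since these coefficients are finite $\odot$-polynomials in the input components, and Lemma~\ref{cgtsrta}(i) gives analyticity of symmetric tensor multiplication; compositions of analytic maps remain analytic. As $\exp$ and $\log$ are analytic and mutually inverse, \eqref{crq4yxs} defines an equivalent global parametrization of $\mathcal F_0(\Phi)$. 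The only mildly delicate point is the identification of the series in \eqref{crq4yxs} with the Milnor exponential map via the one-parameter subgroup property; the rest reduces to formal-power-series manipulations and coefficient-wise analyticity.
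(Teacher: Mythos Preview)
Your proof is correct and follows essentially the same route as the paper: define the curve $t\mapsto\exp(t\alpha(\xi))$, verify it is a smooth one-parameter subgroup with derivative $\alpha(\xi)$ at $t=0$, check that $\log$ is the two-sided inverse, and prove analyticity coefficient-wise via Lemmas~\ref{tfy7e6e} and~\ref{cgtsrta}(i). One small point: your final sentence ``As $\exp$ and $\log$ are analytic and mutually inverse, \eqref{crq4yxs} defines an equivalent global parametrization'' only establishes that $\exp$ is \emph{a} global parametrization; to get \emph{equivalence} you also need the derivative condition~\eqref{cxts5sw5y}, which is exactly your earlier observation $\gamma'(0)=\alpha(\xi)$ (this is the content of Lemma~\ref{analytic_reparametrization}, which the paper invokes at this step).
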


\begin{proof} Fix $\alpha(\xi)=\sum_{k=1}^\infty\alpha_k\xi^{\otimes k}\in\mathcal W_0(\Phi)$. For each $t\in\R$, we define $A(t, \xi):=\exp(t\alpha(\xi))\in\mathcal F_0(\Phi)$. By \eqref{cxrsa4wq4} and \eqref{xra4waq}, we have $A(t, \xi)=1+\sum_{k=1}^\infty A_k(t)\xi^{\otimes k}$, where
\begin{equation}\label{vcrtw5y}
A_k(t)=\sum_{l=1}^k\frac{t^l}{l!}\sum_{\substack{i_1, \dots, i_l\in\mathbb N\\i_1+\dots+i_l=k}}\alpha_{i_1}\odot\cdots\odot \alpha_{i_l}, \quad k\in\mathbb N.\end{equation}
Obviously, for each $k\in\mathbb N$, the map $\R\ni t\mapsto A_k(t)\in(\Phi^{\hat\odot k})'$ is smooth. 
Hence, the map $ \R\ni t\mapsto A(t, \xi)\in\mathcal F_0(\Phi)$ is smooth. For any $s, t\in\R$, we obviously have $A(s, \xi)A(t, \xi)=A(s+t, \xi)$. Hence, $(A(t, \xi))_{t\in\R}$ is a one-parameter subgroup of~$\mathcal F_0(\Phi)$. Formula \eqref{vcrtw5y} implies that $A_k'(0)=\alpha_k$, and so $A'(0, \xi)=\alpha(\xi)$. Therefore, the exponential of $\alpha(\xi)$ is $A(1, \xi)=\exp(\alpha(\xi))$.

By Lemma~\ref{tfy7e6e} and \eqref{vcrtw5y}, to show that the map in \eqref{crq4yxs} is analytic, it is sufficient to prove that, for each $k\in\mathbb N$, the map
$$\prod_{l=1}^k (\Phi^{\hat\odot l})'\ni(\alpha_1, \dots, \alpha_k)\mapsto \sum_{l=1}^k\frac{1}{l!}\sum_{\substack{i_1, \dots, i_l\in\mathbb N\\i_1+\dots+i_l=k}}\alpha_{i_1}\odot\cdots\odot \alpha_{i_l}\in(\Phi^{\hat\odot k})'$$
is analytic. But this follows from Lemma~\ref{cgtsrta} (i) and Remark~\ref{cxfet678i9}. 

Next, it is straightforward to see that, for all $\alpha(\xi)\in\mathcal W_0(\Phi)$ and $A(\xi)\in\mathcal F_0(\Phi)$, we have $\log(\exp(\alpha(\xi)))=\alpha(\xi)$ and $\exp(\log(A(\xi)))=A(\xi)$. The analyticity of the map in~\eqref{xweqa4q42q} can be shown similarly to the analyticity of the exponential map. 
That the exponential map  \eqref{crq4yxs} provides an equivalent global parametrization
follows immediately from Lemma~\ref{analytic_reparametrization} in the Appendix.
\end{proof}

For the definition of a regular Lie group, see Subsection~\ref{app:RegLieGroup} in the Appendix.

%For the definition of a regular Lie group, we refer the reader to \cite[Section~7]{Milnor} (see, in particular, Definition~7.6) or \cite[Section~5]{Gloeckner}.

\begin{proposition}\label{vytde6e6}
The Lie group $\mathcal F_0(\Phi)$ is regular.
\end{proposition}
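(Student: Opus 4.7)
My plan is to exploit two facts: the group $\mathcal F_0(\Phi)$ is commutative, and by Proposition~\ref{cxtstew5} the exponential map is an analytic diffeomorphism between $\mathcal W_0(\Phi)$ and $\mathcal F_0(\Phi)$. In an abelian Lie group the product integral (evolution) of a smooth curve in the Lie algebra ought to coincide with the exponential of its ordinary Riemann integral, and this will be the strategy.

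Given $\alpha(\cdot)\in C^\infty([0,1];\mathcal W_0(\Phi))$, the space $\mathcal W_0(\Phi)$ is complete (being a topological product of complete spaces), so the Riemann integral $\beta(t):=\int_0^t \alpha(s)\,ds$ is well-defined. Applying Lemma~\ref{dtstesrew5} componentwise in each factor $(\Phi^{\hat\odot k})'$ of the product, and noting that the continuous projections onto these factors commute with integration and differentiation, I get $\beta(\cdot)\in C^\infty([0,1];\mathcal W_0(\Phi))$ with $\beta'(t)=\alpha(t)$. Setting $g(t):=\exp(\beta(t))$, the analyticity of $\exp$ in Proposition~\ref{cxtstew5} together with smoothness of $\beta(\cdot)$ gives $g(\cdot)\in C^\infty([0,1];\mathcal F_0(\Phi))$. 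Differentiating the series $\exp(\beta(t))=1+\sum_{n=1}^\infty \frac{1}{n!}\beta(t)^n$ termwise, which is legitimate since each $\xi^{\otimes k}$-coefficient of the series depends polynomially on only the finitely many coefficients $\beta_1(t),\dots,\beta_k(t)$, and using commutativity of the multiplicative product, I obtain
$$g'(t)=\beta'(t)\,g(t)=\alpha(t)\,g(t),\qquad g(0)=1,$$
which is the required product integral of $\alpha(\cdot)$.

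For uniqueness, if $\tilde g(\cdot)$ is any smooth solution of the same Cauchy problem, then by the analytic diffeomorphism $\log$ of Proposition~\ref{cxtstew5} the curve $\gamma(t):=\log(\tilde g(t))$ is smooth, and the chain rule together with commutativity of the product reduces the ODE $\tilde g'(t)=\alpha(t)\tilde g(t)$ to $\gamma'(t)=\alpha(t)$, $\gamma(0)=0$. By Lemma~\ref{dtstesrew5}\,(ii), this forces $\gamma(t)=\beta(t)$ and hence $\tilde g=g$. It remains to show smoothness of the evolution map $\operatorname{Evol}\colon C^\infty([0,1];\mathcal W_0(\Phi))\to\mathcal F_0(\Phi)$, $\alpha(\cdot)\mapsto g(1)$: it factors as the continuous linear (hence analytic) integration $\alpha(\cdot)\mapsto\int_0^1\alpha(s)\,ds$ followed by the analytic map $\exp$, and is therefore analytic. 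The corresponding map into $C^\infty([0,1];\mathcal F_0(\Phi))$ is handled analogously via Lemma~\ref{vcrswu5}.

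The main point requiring care is the termwise differentiation of the exponential series and the chain rule for $\log$---i.e., verifying that $\exp$ and $\log$ genuinely transform the multiplicative ODE in $\mathcal F_0(\Phi)$ into the additive ODE in $\mathcal W_0(\Phi)$. This is ultimately routine, because on each graded component the computation takes place inside the finite product $\prod_{l=1}^k (\Phi^{\hat\odot l})'$ (each $\xi^{\otimes k}$-coefficient depending only on $\alpha_1,\dots,\alpha_k$), where the standard one-variable differential calculus for products and compositions of power series applies directly.
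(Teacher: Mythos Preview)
Your proof is correct and takes a genuinely different route from the paper. The paper solves the evolution equation $A'(t,\xi)=A(t,\xi)\alpha(t,\xi)$ by writing it out as a system of ODEs for the coefficients $A_k(t)$ and solving these by an explicit recurrence (its formula~\eqref{vfdr6e6}); analyticity of $\alpha(\cdot)\mapsto A(1,\xi)$ is then argued coefficient-by-coefficient via the appendix lemmas on products and integrals of operator-valued curves. You instead exploit commutativity to write the solution in closed form as $g(t)=\exp\bigl(\int_0^t\alpha(s)\,ds\bigr)$, so that analyticity of the evolution map is immediate from the factorization through linear integration and the already-established analyticity of $\exp$ (Proposition~\ref{cxtstew5}). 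Your argument is cleaner and more conceptual in this abelian case; the paper's recurrence approach is more laborious here but has the advantage of setting up a template that is reused, with the obvious modifications, for the non-abelian groups $\mathcal F_1(\Phi)$ and $\mathcal S(\Phi)$ (Propositions~\ref{vcte6u} and~\ref{acufytFTCD}), where no closed-form $\exp$ of an integral is available. One small remark: your final sentence invoking Lemma~\ref{vcrswu5} for the map into $C^\infty([0,1];\mathcal F_0(\Phi))$ is superfluous, since the paper's definition of regularity (Subsection~\ref{app:RegLieGroup}) only asks for analyticity of $\gamma\mapsto\eta(1)$, not of the full evolution curve.
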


\begin{proof} Recall that the map in formula \eqref{biject0} provides a global parametrization of $\mathcal F_0(\Phi)$. 
  Let $A(\xi)\in\mathcal F_0(\Phi)$ and $\alpha(\xi)\in\mathcal W_0(\Phi)$. We find:
 \begin{equation}\label{cfxdtrsts5tsw}
 \frac d{ds}\Big|_{s=0}\big(A(\xi)(1+s\alpha(\xi))-1\big)=A(\xi)\alpha(\xi)\in \mathcal W_0(\Phi). 
 \end{equation}

Let $[0, 1]\ni t\mapsto\alpha(t, \xi)=\sum_{k=1}^\infty\alpha_k(t)\xi^{\otimes k}\in\mathcal W_0(\Phi)$ be a smooth curve. In view of \eqref{cfxdtrsts5tsw}, in order to prove that the Lie group $\mathcal F_0(\Phi)$ is regular, we first need to find a smooth curve $[0, 1]\ni t\mapsto A(t, \xi)=1+\sum_{k=1}^\infty A_k(t)\xi^{\otimes k}\in\mathcal F_0(\Phi)$ that satisfies
\begin{equation}\label{drtwu6}
A'(t, \xi)=A(t, \xi)\alpha(t, \xi), \quad t\in[0, 1].
\end{equation}
An easy calculation shows that equation~\eqref{drtwu6} is equivalent to the equations
$$ A_1'(t)=\alpha_1(t), \quad A'_k(t)=\alpha_k(t)+\sum_{l=1}^{k-1}A_l(t)\odot\alpha_{k-l}(t)\quad \text{for }k\ge 2.$$
By Lemma~\ref{dtstesrew5} (i), a solution to this equation is given by the recurrence formula
\begin{equation}\label{vfdr6e6}
 A_1(t)=\int_0^t \alpha_1(r)\, dr, \quad A_k(t)=\int_0^t\bigg(\alpha_k(r)+\sum_{l=1}^{k-1}A_l(r)\odot\alpha_{k-l}(r)\bigg)\, dr\quad \text{for }k\ge 2.\end{equation}
Obviously, the resulting curve $[0,1]\ni t\mapsto A(t,\xi)$ is smooth.

Next, we have to prove that the map 
$$C^\infty([0, 1];\mathcal W_0(\Phi))\ni\alpha(\cdot, \xi)\mapsto A(1, \xi)\in\mathcal F_0(\Phi)$$
is analytic.

As easily seen, for each continuous curve $\alpha(\cdot, \xi)=\sum_{k=1}^\infty\alpha_k(\cdot)\xi^{\otimes k}\in C([0, 1];\mathcal W_0(\Phi))$, formula~\eqref{vfdr6e6} with $t=1$ still defines $ A(1, \xi)=1+\sum_{k=1}^\infty A_k(1)\xi^{\otimes k}$ as an element of $\mathcal F_0(\Phi)$. Since the space $C^\infty([0, 1];\mathcal W_0(\Phi))$ is continuously embedded into $C([0, 1];\mathcal W_0(\Phi))$, it is therefore sufficient to prove that the map 
$$C([0, 1];\mathcal W_0(\Phi))\ni\alpha(\cdot, \xi)\mapsto A(1, \xi)\in\mathcal F_0(\Phi)$$
is analytic.
 But this easily follows from Lemmas~\ref{tfy7e6e}, \ref{cgtsrta}~(ii) (see also Remark~\ref{cxfet678i9}), and~\ref{ctrsw6u4}. 
\end{proof}

\subsection{The $\mathcal F_1(\Phi)$ as a Lie group for composition}

Let the complete l.c.s.\ $\mathcal W_1(\Phi)$ be defined as the topological product of all the spaces $ \mathcal L(\Phi^{\hat\odot k}, \Phi)$ with $k\ge2$. With an abuse of notation, we will identify $\mathcal W_1(\Phi)$ with the subspace of $\mathcal F(\Phi;\Phi)$ that consists of all formal tensor power series $\beta(\xi)=\sum_{k=2}^\infty \beta_k\xi^{\otimes k}$.

The bijective map
\begin{equation}\label{vcfts5w5}
\mathcal W_1(\Phi)\ni\beta(\xi)\mapsto \xi+\beta(\xi)\in \mathcal F_1(\Phi)
\end{equation}
 determines a topology on $\mathcal F_1(\Phi)$, which 
 %makes it a smooth manifold modeled on the l.c.s.\ $\mathcal W_1(\Phi)$. Note that this topology on $\mathcal F_1(\Phi)$ 
 coincides with the topology 
 % on $\mathcal F_1(\Phi)$ 
induced by that on $\mathcal F(\Phi;\Phi)$.

\begin{proposition} \label{ccdyfytsfystfc}
The $\mathcal F_1(\Phi)$ equipped with the composition of formal tensor power series is a Lie group modeled on $\mathcal W_1(\Phi)$ through the global parametrization \eqref{vcfts5w5}.
The corresponding Lie algebra, hence, can be identified with  $\mathcal W_1(\Phi)$. Under this identification, the  Lie bracket on $\mathcal W_1(\Phi)$ is given by
\begin{equation}\label{cxteaaq}
[\beta^{(1)}(\xi), \beta^{(2)}(\xi)]= D_{\beta^{(2)}}\beta^{(1)}(\xi)- D_{\beta^{(1)}}\beta^{(2)}(\xi), \quad \beta^{(1)}, \beta^{(2)}\in\mathcal W_1(\Phi).
\end{equation}
\end{proposition}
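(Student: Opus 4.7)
The plan is to mirror the arguments of Propositions~\ref{ccfgrag543w}--\ref{vytde6e6} for $\mathcal F_0(\Phi)$, replacing multiplication by composition. There are three tasks: showing that $(\mathcal F_1(\Phi),\circ)$ is a group with identity $\xi$, showing that composition and inversion are analytic in the chart \eqref{vcfts5w5} (so the tangent space at the identity is naturally $\mathcal W_1(\Phi)$), and computing the Lie bracket.

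For the group structure, associativity of $\circ$ is a formal identity. To construct an inverse of $B(\xi)=\xi+\sum_{k\ge2}B_k\xi^{\otimes k}$, I would impose $B\circ\tilde B=\xi$ and read off from formulas \eqref{vcxdeay543q}--\eqref{crtsdtrsew} a recurrence analogous to \eqref{vcter4y56}: one obtains $\tilde B_2=-B_2$, and for each $k\ge3$, $\tilde B_k$ is a finite sum of continuous multilinear expressions in $B_2,\dots,B_k$ and $\tilde B_2,\dots,\tilde B_{k-1}$. A standard uniqueness-of-inverses argument then gives $\tilde B\circ B=\xi$ as well. Analyticity of the composition map $\mathcal F_1(\Phi)^2\to\mathcal F_1(\Phi)$ follows directly from \eqref{crtsdtrsew}: each coefficient $D_k$ of $C\circ B$ is a finite sum of continuous multilinear expressions in the finite tuples $(B_l)_{2\le l\le k}$ and $(C_l)_{2\le l\le k}$, so Lemmas~\ref{cgtsrta} and~\ref{tfy7e6e} deliver analyticity. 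Analyticity of inversion follows by induction on $k$ from the recurrence, together with closure of the class of analytic maps under composition.

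For the Lie bracket, I would expand the group multiplication in the chart \eqref{vcfts5w5} to bilinear order in $(\beta^{(1)},\beta^{(2)})$. Using $(\xi+\beta^{(1)})\circ(\xi+\beta^{(2)})=(\xi+\beta^{(2)})+\beta^{(1)}(\xi+\beta^{(2)})$ together with the symmetric-tensor binomial expansion $(\xi+\beta^{(2)}(\xi))^{\otimes k}=\sum_{j=0}^{k}\binom{k}{j}\xi^{\odot(k-j)}\odot\beta^{(2)}(\xi)^{\odot j}$ and the definition \eqref{cxtrswu5e4i6} of $D_\zeta$ yields
\begin{equation*}
(\xi+\beta^{(1)})\circ(\xi+\beta^{(2)})=\xi+\beta^{(1)}(\xi)+\beta^{(2)}(\xi)+D_{\beta^{(2)}}\beta^{(1)}(\xi)+R(\beta^{(1)},\beta^{(2)}),
\end{equation*}
where $R$ collects the contributions with $j\ge2$, all of which are linear in $\beta^{(1)}$ and of multilinear degree at least two in $\beta^{(2)}$, and hence do not contribute to the part bilinear in $(\beta^{(1)},\beta^{(2)})$. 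Thus the bilinear part of the chart multiplication is $B_2(\beta^{(1)},\beta^{(2)})=D_{\beta^{(2)}}\beta^{(1)}$, and the general formula recalled in Appendix~B expressing the Lie bracket in a chart centered at the identity as the antisymmetrization of the bilinear part of the product gives $[\beta^{(1)},\beta^{(2)}]=B_2(\beta^{(1)},\beta^{(2)})-B_2(\beta^{(2)},\beta^{(1)})=D_{\beta^{(2)}}\beta^{(1)}-D_{\beta^{(1)}}\beta^{(2)}$. The Jacobi identity then comes for free from associativity of $\circ$ via the general framework.

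The main obstacle is the last step. Besides isolating the bilinear term, one must verify that $D_{\beta^{(2)}}\beta^{(1)}(\xi)$ really belongs to $\mathcal W_1(\Phi)$ (i.e., its order-one coefficient vanishes); a direct inspection of \eqref{cxtrswu5e4i6}, using that each $\beta^{(i)}$ starts at order two, shows that $D_{\beta^{(2)}}\beta^{(1)}(\xi)$ starts at order three in $\xi$, so this is automatic. One must also take care to align the sign and normalization conventions with those of the Milnor-style framework recalled in Appendix~B.
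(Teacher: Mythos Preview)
Your proposal is correct and follows essentially the same route as the paper: the group structure via a recurrence for the compositional inverse, analyticity of composition and inversion coefficient-by-coefficient via Lemma~\ref{tfy7e6e} and the multilinear lemmas in the Appendix, and the Lie bracket by expanding the chart multiplication to bilinear order and antisymmetrizing as in Subsection~\ref{app:LieAlgebra}. The only cosmetic differences are that the paper introduces scalars $s_1,s_2$ to isolate the $s_1s_2$-coefficient (matching the formulation \eqref{awawfxdtrsre}--\eqref{cftes56u7} literally) whereas you extract the ``bilinear part'' directly, and that the paper also invokes Lemma~\ref{gctessswe5}(i) for the operator product $C_l\,(B_{i_1}\odot\cdots\odot B_{i_l})$ appearing in \eqref{crtsdtrsew}, which you should cite alongside Lemma~\ref{cgtsrta}.
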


\begin{proof}
 Let $B^{(j)}(\xi)=\sum_{k=1}^\infty B_k^{(j)}\xi^{\otimes k}\in\mathcal F_1(\Phi)$ ($j=1, 2$). By \eqref{vcxdeay543q} and 
 \eqref{crtsdtrsew}, we have 
\begin{equation}\label{vcdse5qcc}
B^{(1)}(B^{(2)}(\xi))=\sum_{k=1}^\infty B_k\xi^{\otimes k}\in\mathcal F_1(\Phi), 
\end{equation}
where $B_1=\mathbf 1$ and 
\begin{equation}\label{vhdtuwe}
B_k=\sum_{l=1}^k B_l^{(1)}\sum_{\substack{i_1, \dots, i_l\in\mathbb N\\i_1+\dots+i_l=k}}B_{i_1}^{(2)}\odot B_{i_2}^{(2)}\odot\cdots\odot B_{i_l}^{(2)}, \quad k\ge 2.\end{equation}

The composition is obviously an associative operation. Hence, to prove that $\mathcal F_1(\Phi)$ is a group, we have to show that, for each $B(\xi)=\sum_{k=1}^\infty B_k\xi^{\otimes k}\in\mathcal F_1(\Phi)$, there exists $B^{\langle-1\rangle}(\xi)\in\mathcal F_1(\Phi)$ such that $B(B^{\langle-1\rangle}(\xi))=B^{\langle-1\rangle}(B(\xi))=\xi$. 
First, we find $B^{(1)}(\xi)=\sum_{k=1}^\infty B^{(1)}_k\xi^{\otimes k} \in \mathcal F_1(\Phi)$ such that $B(B^{(1)}(\xi))=\xi$. Formulas~\eqref{vcdse5qcc}, \eqref{vhdtuwe} imply the recurrence formula
\begin{equation*} B_2^{(1)}=-B_2, \quad 
B^{(1)}_k= -\sum_{l=2}^{k} B_l\sum_{\substack{i_1, \dots, i_l\in\mathbb N\\i_1+\dots+i_l=k}}B_{i_1}^{(1)}\odot B_{i_2}^{(1)}\odot\cdots\odot B_{i_l}^{(1)}\quad \text{for }k\ge3, \end{equation*}
with $B^{(1)}_1=\mathbf 1$. Similarly, we find $B^{(2)}(\xi)=\sum_{k=1}^\infty B^{(2)}_k\xi^{\otimes k} \in \mathcal F_1(\Phi)$ such that $B^{(2)}(B(\xi))=\xi$ from the recurrence formula 
\begin{equation}\label{bvcyrd}
B_2^{(2)}=-B_2, \quad B^{(2)}_k= -\sum_{l=1}^{k-1} B_l^{(2)}\sum_{\substack{i_1, \dots, i_l\in\mathbb N\\i_1+\dots+i_l=k}}B_{i_1}\odot B_{i_2}\odot\cdots\odot B_{i_l}\quad\text{for } k\ge3, 
\end{equation}
with $B^{(2)}_1=\mathbf 1$. The associativity of the composition implies $B^{(1)}=B^{(2)}=B^{\langle-1\rangle}$.

Similarly to the proof of Proposition~\ref{ccfgrag543w}, we now show that $\mathcal F_1(\Phi)$ is a Lie group. Formulas \eqref{vcdse5qcc} and \eqref{vhdtuwe}, Lemmas~\ref{tfy7e6e}, \ref{gctessswe5} (i), \ref{cgtsrta} (i) and Remark~\ref{cxfet678i9} imply that the map $\mathcal F_1(\Phi)^2\ni(B^{(1)}(\xi), B^{(2)}(\xi))\mapsto B^{(1)}(B^{(2)}(\xi))\in\mathcal F_1(\Phi)$ is analytic. Analogously, formula \eqref{bvcyrd} implies that the map $\mathcal F_1(\Phi)\ni B(\xi)\mapsto B^{\langle-1\rangle}(\xi)\in\mathcal F_1(\Phi)$ is analytic. Indeed, an induction argument shows that each 
$B_k^{(2)}$ ($k\ge 2$) in formula \eqref{bvcyrd} is an analytic function of $B_2, \dots, B_k$.

To calculate the Lie bracket on $\mathcal W_1(\Phi)$, we use Subsection~\ref{app:LieAlgebra} in the Appendix.
 Let $\beta^{(j)}(\xi)=\sum_{k=2}^\infty \beta_k^{(j)}\xi^{\otimes k}\in\mathcal W_1(\Phi)$ ($j=1, 2$) and $s_1, s_2\in\mathbb C$. We have, by using \eqref{cfdst}, \eqref{vcdse5qcc} and \eqref{vhdtuwe}, 
\begin{align}
&(\xi+s_1\beta^{(1)}(\xi))\circ(\xi+s_2\beta^{(2)}(\xi))-\xi=s_2\beta^{(2)}(\xi)+s_1\sum_{k=2}^\infty \beta_k^{(1)}(\xi+s_2\beta^{(2)}(\xi))^{\otimes k}\notag\\
&\quad=s_2\beta^{(2)}(\xi)+s_1\beta^{(1)}(\xi)+s_1s_2\sum_{k=2}^\infty k\beta_k^{(1)}(\xi^{\otimes(k-1)}\odot \beta^{(2)}(\xi))+R(s_1, s_2, \xi)\notag\\
&\quad=s_1\beta^{(1)}(\xi)+s_2\beta^{(2)}(\xi)+s_1s_2\, D_{\beta^{(2)}}\beta^{(1)}(\xi)+R(s_1, s_2, \xi), \label{xear4aq435}
\end{align}
where $R(s_1, s_2, \xi)$ is the sum of the higher order terms in $s_1$ and $s_2$. Formula \eqref{xear4aq435} implies~\eqref{cxteaaq}. 
\end{proof}

\begin{proposition}\label{gfxsetser5}
The exponential map 
\begin{equation}\label{EXP1}
\mathcal W_1 (\Phi)\ni \beta(\xi)\mapsto\operatorname{Exp}(\beta(\xi))\in\mathcal F_1(\Phi)
\end{equation}
exists for $\mathcal F_1(\Phi)$. This map is bijective and we denote its inverse by
\begin{equation*}%\label{vaw4q546}
\mathcal F_1(\Phi) \ni B(\xi)\mapsto\operatorname{Log}(B(\xi))\in\mathcal W_1(\Phi).
\end{equation*}
  Both maps $\operatorname{Exp}$ and $\operatorname{Log}$ are analytic;  hence, the exponential map $\operatorname{Exp}$  provides an equivalent global parametrization of $\mathcal F_1(\Phi)$. In particular, under the global parametrization $\operatorname{Exp}$, the Lie bracket on $\mathcal W_1(\Phi)$ is still given by  \eqref{cxteaaq}.
\end{proposition}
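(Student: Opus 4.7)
The plan is to mirror the strategy of Proposition~\ref{cxtstew5}, replacing the multiplicative one-parameter subgroup by the flow of a formal vector field. Fix $\beta(\xi)=\sum_{k=2}^\infty \beta_k\xi^{\otimes k}\in\mathcal{W}_1(\Phi)$. I look for a smooth curve $\R\ni t\mapsto B(t,\xi)=\xi+\sum_{k=2}^\infty B_k(t)\xi^{\otimes k}\in\mathcal{F}_1(\Phi)$ solving the Cauchy problem
\begin{equation}\label{PLANode}
\partial_t B(t,\xi)=\beta(B(t,\xi)),\qquad B(0,\xi)=\xi.
\end{equation}
Expanding the right-hand side via~\eqref{vcxdeay543q}--\eqref{crtsdtrsew}, the $\xi^{\otimes k}$-coefficient of $\beta(B(t,\xi))$ equals $\sum_{l=2}^{k}\beta_l\sum_{i_1+\dots+i_l=k,\ i_j\ge 1}B_{i_1}(t)\odot\cdots\odot B_{i_l}(t)$ (with $B_1(t):=\mathbf{1}$), whose $l=k$ summand contributes $\beta_k$ and whose remaining summands involve only $B_2(t),\dots,B_{k-1}(t)$ and $\beta_2,\dots,\beta_{k-1}$. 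Thus \eqref{PLANode} decouples into a triangular system of Banach-valued Cauchy problems in $\mathcal{L}(\Phi^{\hat\odot k},\Phi)$, uniquely solvable by Lemma~\ref{dtstesrew5}; each $B_k(t)$ comes out as a polynomial in $t$. I set $\operatorname{Exp}(\beta(\xi)):=B(1,\xi)$.

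Next I verify that $(B(t,\cdot))_{t\in\R}$ is a one-parameter subgroup of $\mathcal{F}_1(\Phi)$, so that $\operatorname{Exp}$ really is the exponential map in the sense of Subsection~\ref{app:ExpMap}. Fix $t\in\R$ and consider $C(s,\xi):=B(s+t,\xi)$ and $D(s,\xi):=B(s,\cdot)\circ B(t,\cdot)(\xi)$. Both curves satisfy~\eqref{PLANode} in the variable $s$ with the common initial value $B(t,\xi)$ at $s=0$; by the coefficient-wise uniqueness already established (now with initial data $B(t,\xi)$ in place of $\xi$), $C=D$, i.e.\ $B(s+t,\cdot)=B(s,\cdot)\circ B(t,\cdot)$. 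Moreover, \eqref{PLANode} at $t=0$ gives $\partial_tB(0,\xi)=\beta(\xi)$, so $\beta(\xi)$ is the tangent vector of this one-parameter subgroup at the identity.

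To prove bijectivity I construct $\operatorname{Log}$ explicitly. Integrating the triangular system from $0$ to $1$ yields a recurrence
\begin{equation*}
B_k(1)=\beta_k+F_k(\beta_2,\dots,\beta_{k-1}),\qquad k\ge 2,
\end{equation*}
where $F_k$ is built from symmetric tensor products and compositions of the symmetric tensor powers of $\mathbf{1}$ with its arguments and hence does not involve $\beta_k$. Given $B(\xi)=\xi+\sum_{k\ge 2}B_k\xi^{\otimes k}\in\mathcal{F}_1(\Phi)$, I define $\beta_k:=B_k-F_k(\beta_2,\dots,\beta_{k-1})$ inductively and set $\operatorname{Log}(B(\xi)):=\sum_{k=2}^\infty\beta_k\xi^{\otimes k}\in\mathcal{W}_1(\Phi)$; this is well-defined and two-sided inverse to $\operatorname{Exp}$.

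Analyticity of both maps reduces, by Lemma~\ref{tfy7e6e}, to analyticity of the coefficient-level maps, which follows by induction along the triangular recurrence from Lemmas~\ref{cgtsrta}, \ref{gctessswe5} and~\ref{ctrsw6u4} in the Appendix together with Remark~\ref{cxfet678i9} (analyticity of the symmetric tensor product, composition, and Riemann integration). The equivalent global parametrization claim is then immediate from Lemma~\ref{analytic_reparametrization}; and since the Lie bracket is intrinsic to the Lie algebra, it is preserved under this change of global parametrization, yielding the last assertion. The main technical point is the careful verification that the triangular structure of \eqref{PLANode} genuinely reduces everything to Banach-space ODEs one coefficient at a time, so that Lemma~\ref{dtstesrew5} and the Appendix~A lemmas can be applied uniformly along the induction.
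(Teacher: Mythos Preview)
Your proposal is correct and follows essentially the same strategy as the paper: solve the flow equation $\partial_t B(t,\xi)=\beta(B(t,\xi))$ coefficient-wise via Lemma~\ref{dtstesrew5}, use uniqueness with general initial data to get the one-parameter subgroup property, invert the triangular recurrence $B_k=\beta_k+(\text{terms in }\beta_2,\dots,\beta_{k-1})$ to define $\operatorname{Log}$, and deduce analyticity by induction through the Appendix~A lemmas and the equivalent-parametrization claim via Lemma~\ref{analytic_reparametrization}. The only presentational difference is that the paper explicitly sets up Step~1 with an arbitrary initial datum $\gamma(\xi)\in\mathcal F_1(\Phi)$ before specializing to $\gamma(\xi)=\xi$, whereas you note parenthetically that the same uniqueness argument applies with initial data $B(t,\xi)$; this is fine, but worth stating as carefully as the paper does.
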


\begin{proof} We divide the proof into several steps.

{\it Step 1}. Let $\beta(\xi)=\sum_{k=2}^\infty \beta_k\xi^{\otimes k}\in\mathcal W_1(\Phi)$ and $\gamma(\xi)=\xi+\sum_{k=2}^\infty \gamma_k\xi^{\otimes k}\in\mathcal F_1(\Phi)$. We state that there exists a unique smooth curve
$\R\ni t \mapsto B(t, \xi)=\xi+\sum_{k=2}^\infty B_k(t)\xi^{\otimes k}\in\mathcal F_1(\Phi)$ such that 
\begin{align}
B'(t, \xi)&=\beta(B(t, \xi)), \quad t\in\R, \label{rewq}\\
B(0, \xi)&=\gamma(\xi).\label{darwa4w}
\end{align}
 Indeed, in view of formulas \eqref{vcxdeay543q}, \eqref{crtsdtrsew}, the differential equation \eqref{rewq} means
\[
B_2'(t)=\beta_2, \quad B'_k(t)=\beta_k+\sum_{l=2}^{k-1}\beta_l \sum_{\substack{i_1, \dots, i_l\in\mathbb N\\i_1+\dots+i_l=k}}B
_{i_1}(t)\odot B_{i_2}(t)\odot\cdots\odot B_{i_l}(t)\quad\text{for }k\ge3.
\]
Hence, by Lemma~\ref{dtstesrew5} (ii), the unique solution of the Cauchy problem \eqref{rewq}, \eqref{darwa4w} is given by the recurrence formula
\begin{gather}
B_2(t)=\gamma_2+t\beta_2, \notag\\
B_k(t)=\gamma_k+ t\beta_k+\sum_{l=2}^{k-1} \sum_{\substack{i_1, \dots, i_l\in\mathbb N\\i_1+\dots+i_l=k}}\beta_l\int_0^t B
_{i_1}(r)\odot B_{i_2}(r)\odot\cdots\odot B_{i_l}(r)\, dr\quad\text{for } k\ge3.\label{cxreay4}
\end{gather}
 It is easy to see by induction that 
\begin{equation}\label{cxera4y5}
B_k(t)=\gamma_k+t\beta_k+\sum_{m=1}^{k-1}t^m\rho_{k, m}(\gamma_2, \dots, \gamma_{k-1}, \beta_2, \dots, \beta_{k-1}), \quad k\ge3, 
\end{equation}
where each $\rho_{k, m}(\gamma_2, \dots, \gamma_{k-1}, \beta_2, \dots, \beta_{k-1})\in\mathcal L(\Phi^{\hat\odot k}, \Phi)$ is completely determined by $\gamma_2, \dots, \gamma_{k-1}, \beta_2, \dots, \beta_{k-1}$.
In particular, $\mathbb R\ni t\mapsto B(t, \xi)$ is a smooth curve in $\mathcal F_1(\Phi)$.

{\it Step 2}. Let $B(t, \xi)$ be the solution to the Cauchy problem \eqref{rewq}, \eqref{darwa4w} in which $\gamma(\xi)=\xi$. A standard argument shows that $\big(B(t, \xi)\big)_{t\in\R}$ is a one-parameter subgroup of $\mathcal F_1(\Phi)$. Indeed, fix $s\in\R$ and define $\theta(t, \xi):=B(s+t, \xi)$ for $t\in\R$.
Then, 
\begin{equation}\label{cxra4aq4}
\theta'(t, \xi)=\frac d{dr}\Big|_{r=s+t}B(r, \xi)=\beta(B(s+t, \xi))=\beta(\theta(t, \xi)), \quad \theta(0, \xi)=B(s, \xi).
\end{equation}
Next, define the curve $\psi(t, \xi):=B(t, B(s, \xi))$. Then
\begin{equation}\label{a4bvdr}
\psi'(t, \xi)=B'(t, B(s, \xi))=\beta(B(t, B(s, \xi)))=\beta(\psi(t, \xi)), \quad \psi(0, \xi)=B(s, \xi).
\end{equation}
Formulas \eqref{cxra4aq4} and \eqref{a4bvdr} imply that $\theta(t, \xi)=\psi(t, \xi)$ for all $t\in\R$, i.e., $B(s+t, \xi)=B(t, B(s, \xi))$. By \eqref{cxera4y5}, we have $B'(0, \xi)=\beta(\xi)$, and so $B(t, \xi)=\operatorname{Exp}(t\beta(\xi))$. 

Denoting $B(\xi)=\xi+\sum_{k=2}^\infty B_k\xi^{\otimes k}:=B(1, \xi)=\operatorname{Exp}(\beta(\xi))\in\mathcal F_1(\Phi)$, we conclude from formula~\eqref{cxera4y5} that
\begin{equation}\label{vcxtaq}
B_2=\beta_2, \quad B_k=\beta_k+\sum_{m=1}^{k-1}\eta_{k, m}(\beta_2, \dots, \beta_{k-1})\quad \text{for } k\ge3, \end{equation}
where 
\begin{equation}\label{fss5ws55}\eta_{k, m}(\beta_2, \dots, \beta_{k-1}):=\rho_{k, m}(0, \dots, 0, \beta_2, \dots, \beta_{k-1}).
\end{equation}

{\it Step 3}. We state that, for each $B(\xi)=\xi+\sum_{k=2}^\infty B_k\xi^{\otimes k}\in\mathcal F_1(\Phi)$, there exists a unique $\beta(\xi)=\sum_{k=2}^\infty \beta_k\xi^{\otimes k}\in\mathcal W_1(\Phi)$ such that $\operatorname{Exp}(\beta(\xi))=B(\xi)$.
Indeed, formula \eqref{vcxtaq} implies the recurrence formula: 
\begin{equation}\label{fxdraq5y}
\beta_2=B_2, \quad \beta_k=B_k-\sum_{m=1}^{k-1}\eta_{k, m}(\beta_2, \dots, \beta_{k-1})\quad\text{for }k\ge3.
\end{equation}
 Thus, the $\operatorname{Log}$ map exists and $\beta(\xi)=\operatorname{Log}(B(\xi))$.

{\it Step 4}. Similarly to the proof of Proposition~\ref{ccdyfytsfystfc}, we easily conclude by induction from formulas~\eqref{cxreay4}, \eqref{cxera4y5} and \eqref{fss5ws55} that each map $(\beta_2, \dots, \beta_{k-1})\mapsto \eta_{k, m}(\beta_2, \dots, \beta_{k-1})$ is analytic. Hence, formulas \eqref{vcxtaq} and \eqref{fxdraq5y} imply that both maps $\operatorname{Exp}$ and $\operatorname{Log}$ are analytic. 
The fact that the exponential map $\operatorname{Exp}$ provides an equivalent global parametrization of $\mathcal F_1(\Phi)$ follows from Lemma~\ref{analytic_reparametrization} in the Appendix. 
\end{proof}

% \begin{corollary}\label{vctrset5ws} When $\mathcal F_1(\Phi)$ is globally parametrized through the map in~\eqref{vaw4q546}, the Lie bracket on $\mathcal W_1(\Phi)$ is still given by \eqref{cxteaaq}.
% \end{corollary}

% \begin{proof}
% Let $\beta(\xi)\in\mathcal W_1(\Phi)$. Consider the following path in $\mathcal F_1(\Phi)$: $B(t, \xi)=\operatorname{EXP}(t\beta(\xi))$. By the definition of the exponential map, we have 
% $\frac{d}{dt}\big|_{t=0}\big(B(t, \xi)-\xi\big)=\beta(\xi)$. On the other hand, 
% $\frac d{dt}\big|_{t=0}\operatorname{LOG}\big(B(t, \xi)\big) =\frac d{dt}\big|_{t=0}\big(t\beta(\xi)\big)=\beta(\xi)$. Therefore, when we change the global parametrization of $\mathcal F_1(\Phi)$ by replacing the map in \eqref{vcfts5w5} with the map in~\eqref{vaw4q546}, the equivalence class of paths whose derivative at zero is equal to $\beta(\xi)$ does not change. But this implies that the Lie bracket on $\mathcal W_1(\Phi)$ does not change, see \cite[Section~5]{Milnor}. 
% \end{proof}

\begin{proposition}\label{vcte6u}
The Lie group $\mathcal F_1(\Phi)$ is regular.
\end{proposition}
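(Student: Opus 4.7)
The plan is to follow the strategy of Proposition \ref{vytde6e6}, adapted to the non-commutative composition operation of $\mathcal F_1(\Phi)$. Tangent vectors to $\mathcal F_1(\Phi)$ at the identity $\xi\mapsto\xi$ form the space $\mathcal W_1(\Phi)$, and a short computation shows that the differential of right translation $R_{B_0}(C):=C\circ B_0$ at the identity sends $\beta\in\mathcal W_1(\Phi)$ to $\beta\circ B_0$. Hence, given a smooth curve $[0,1]\ni t\mapsto\beta(t,\xi)=\sum_{k=2}^\infty\beta_k(t)\xi^{\otimes k}\in\mathcal W_1(\Phi)$, regularity of $\mathcal F_1(\Phi)$ reduces to producing a smooth curve $t\mapsto B(t,\xi)\in\mathcal F_1(\Phi)$ solving
\begin{equation}\label{vcrwxq}
B'(t,\xi)=\beta(t,B(t,\xi))\quad (t\in[0,1]),\qquad B(0,\xi)=\xi,
\end{equation}
and to showing that the resulting evaluation map $C^\infty([0,1];\mathcal W_1(\Phi))\ni\beta(\cdot,\xi)\mapsto B(1,\xi)\in\mathcal F_1(\Phi)$ is analytic. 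Equation \eqref{vcrwxq} is precisely the time-dependent analogue of the ODE appearing in Step 1 of the proof of Proposition \ref{gfxsetser5}.

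To solve \eqref{vcrwxq}, I would write $B(t,\xi)=\xi+\sum_{k=2}^\infty B_k(t)\xi^{\otimes k}$, set $B_1(t):=\mathbf 1$, and expand both sides in powers of $\xi$ by means of formulas \eqref{vcxdeay543q}--\eqref{crtsdtrsew}. A direct computation decouples \eqref{vcrwxq} into the triangular recursive system
\[
B_2'(t)=\beta_2(t),\qquad B_k'(t)=\beta_k(t)+\sum_{l=2}^{k-1}\beta_l(t)\sum_{\substack{i_1,\dots,i_l\in\mathbb N\\ i_1+\dots+i_l=k}}B_{i_1}(t)\odot\cdots\odot B_{i_l}(t)\quad(k\ge 3),
\]
with initial data $B_k(0)=0$. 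Since the right-hand side of the equation for $B_k'(t)$ involves only $\beta_2(t),\dots,\beta_k(t)$ and the previously constructed $B_2(t),\dots,B_{k-1}(t)$, Lemma~\ref{dtstesrew5}\,(ii) applied inductively yields a unique solution $B_k\in C^\infty([0,1];\mathcal L(\Phi^{\hat\odot k},\Phi))$, given by the integral recurrence analogous to \eqref{vfdr6e6}. The resulting curve $t\mapsto B(t,\xi)$ thus lies in $C^\infty([0,1];\mathcal F_1(\Phi))$ and solves \eqref{vcrwxq}.

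For the analyticity of the map $\beta(\cdot,\xi)\mapsto B(1,\xi)$, I would mimic the end of the proof of Proposition~\ref{vytde6e6}: the integral recurrence still defines $B(1,\xi)\in\mathcal F_1(\Phi)$ for any merely continuous $\beta(\cdot,\xi)\in C([0,1];\mathcal W_1(\Phi))$, and because $C^\infty([0,1];\mathcal W_1(\Phi))$ embeds continuously into $C([0,1];\mathcal W_1(\Phi))$ and $\mathcal F_1(\Phi)$ carries the product topology, it suffices to establish coefficient-wise analyticity $\beta(\cdot,\xi)\mapsto B_k(1)$ from $C([0,1];\mathcal W_1(\Phi))$ for every $k\ge 2$. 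This follows by induction on $k$ from Lemma~\ref{tfy7e6e} together with the Appendix lemmas already deployed in the analyticity arguments of Propositions~\ref{vytde6e6} and \ref{ccdyfytsfystfc}, namely Lemmas~\ref{gctessswe5}, \ref{cgtsrta}\,(ii) and \ref{ctrsw6u4} (and Remark~\ref{cxfet678i9}). The main obstacle, relative to the commutative setting of Proposition~\ref{vytde6e6}, is purely organizational: the right-hand side of \eqref{vcrwxq} is a nested symmetric-tensor polynomial in the $B_j$'s rather than a plain symmetric product, so both the inductive integration and the level-by-level analyticity estimates require more careful bookkeeping; however, the triangular structure of the recursion guarantees that only finitely many quantities intervene at each level, so no analytical input beyond what has been assembled in Sections~\ref{vgdtrs5w4v} and~\ref{vgydi67r} is needed.
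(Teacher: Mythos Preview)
Your overall strategy is sound, but there is a mismatch between the evolution equation you write down and the one required by the paper's definition of regularity. In Appendix~B.5 the action $g\cdot v$ is $\tfrac{d}{ds}\big|_{s=0}\bigl(g\cdot\varphi(sv)\bigr)$, i.e.\ the differential of \emph{left} translation $L_g$ at the identity. For $\mathcal F_1(\Phi)$ with group law $\circ$ this gives
\[
B\cdot\beta=\frac{d}{ds}\Big|_{s=0}\,B\!\circ\!(\xi+s\beta(\xi))=D_\beta B(\xi),
\]
so the regularity ODE is $B'(t,\xi)=D_{\beta(t,\cdot)}B(t,\cdot)(\xi)$, not $B'(t,\xi)=\beta(t,B(t,\xi))$. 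Your equation \eqref{vcrwxq} comes from the differential of \emph{right} translation and is therefore the ``other'' evolution equation; your sentence ``regularity of $\mathcal F_1(\Phi)$ reduces to \dots'' is not justified under the paper's convention. You can close this gap either by invoking the standard (but not entirely trivial) equivalence of left and right regularity, or---more simply---by redoing the computation with left translation, which is exactly what the paper does.

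Apart from this, the two approaches diverge in the resulting recursion. The paper's left-translation ODE yields a system that is \emph{linear} in the unknowns $B_j(t)$:
\[
B_k'(t)=\beta_k(t)+\sum_{j=2}^{k-1} j\,B_j(t)\bigl(\mathbf 1_{j-1}\odot\beta_{k-j+1}(t)\bigr),
\]
whereas your right-translation ODE leads to the nonlinear (polynomial) recursion inherited from Step~1 of Proposition~\ref{gfxsetser5}. Both are solvable by Lemma~\ref{dtstesrew5} and both give analytic dependence on $\beta$ via Lemmas~\ref{tfy7e6e}, \ref{gctessswe5}~(ii), \ref{cgtsrta}~(ii) and~\ref{ctrsw6u4}, so once the reduction step is fixed your argument goes through; the paper's route is just a little cleaner at the bookkeeping stage.
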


\begin{proof}  Recall that the map in formula \eqref{vcfts5w5} provides a global parametrization of $\mathcal F_1(\Phi)$.
  Let $B(\xi)=\xi+\sum_{k=2}^\infty B_k\xi^{\otimes k}\in\mathcal F_1(\Phi)$ and $\beta(\xi)=\sum_{k=2}^\infty\beta_k\xi^{\otimes k}\in\mathcal W_1(\Phi)$.  For $s\in\mathbb C$, we have
\begin{equation}\label{cxrear4aqaa}
B(\xi+s\beta(\xi))=B(\xi)+s\bigg(\beta(\xi)+\sum_{k=2}^\infty kB_k(\xi^{\otimes(k-1)}\odot\beta(\xi))\bigg)+R(s, \xi), 
\end{equation}
where $R(s, \xi)$ is the sum of the higher order terms in $s$. Then, by \eqref{cfdst} and \eqref{cxrear4aqaa}, 
\begin{equation}\label{cxdreaw4t}
\frac{d}{ds}\Big|_{s=0}\big(B(\xi+s\beta(\xi))-\xi\big)=\beta(\xi)+\sum_{k=2}^\infty kB_k(\xi^{\otimes(k-1)}\odot\beta(\xi))= D_{\beta}B(\xi)\in \mathcal W_1(\Phi).\end{equation}

Let now $[0, 1]\ni t\mapsto \beta(t, \xi)=\sum_{k=2}^\infty\beta_k(t)\xi^{\otimes k}\in\mathcal W_1(\Phi)$ be a smooth curve. In view of~\eqref{cxdreaw4t}, in order to prove that the Lie group $\mathcal F_1(\Phi)$ is regular, we first need to find a smooth curve  $[0, 1]\ni t\mapsto B(t, \xi)=\xi+\sum_{k=2}^\infty B_k(t)\xi^{\otimes k}\in\mathcal F_1(\Phi)$ that satisfies
\begin{align}
\sum_{k=2}^\infty B'_k(t)\xi^{\otimes k}&= \beta(t, \xi)+\sum_{k=2}^\infty k B_k(t)(\xi^{\otimes(k-1)}\odot\beta(t, \xi))\label{yufr7r7}\\
&= \beta(t, \xi)+\sum_{k=3}^\infty \sum_{j=2}^{k-1}j B_j(t)\big(\mathbf 1_{j-1}\odot\beta_{k-j+1}(t)\big)\xi^{\otimes k}.\label{vcfts5uw}
\end{align}
Equation~\eqref{vcfts5uw} is equivalent to the system of equations
\begin{equation}
 B_2'(t)=\beta_2(t), \quad B'_k(t)=\beta_k(t)+\sum_{j=2}^{k-1}j B_j(t)\big(\mathbf 1_{j-1}\odot\beta_{k-j+1}(t)\big), \quad k\ge 3.\label{vydy6e}
\end{equation}
A solution to \eqref{vydy6e} is given by the recurrence formula
\begin{align}
 B_2(t)&=\int_0^t\beta_2(r)\, dr, \notag\\
 B_k(t)&=\int_0^t\bigg(\beta_k(r)+\sum_{j=2}^{k-1}j B_j(r)\big(\mathbf 1_{j-1}\odot\beta_{k-j+1}(r)\big)\bigg)dr, \quad k\ge 3.\label{dstere5w5w}
\end{align}
Clearly, the resulting curve $[0,1]\ni t\mapsto B(t,\xi)$ is smooth. 

Next, with the help of Lemmas~\ref{tfy7e6e}, \ref{gctessswe5} (ii), \ref{cgtsrta} (ii), and \ref{ctrsw6u4}, the analyticity of the map
$ C^\infty([0, 1];\mathcal W_1(\Phi))\ni\beta(\cdot, \xi)\mapsto B(1, \xi)\in\mathcal F_1(\Phi)$
easily follows by analogy with the proof of Proposition~\ref{vytde6e6}. 
\end{proof}

\subsection{The semidirect product of $\mathcal F_0(\Phi)$ and $\mathcal F_1(\Phi)$}

Denote $\mathcal S(\Phi):=\mathcal F_0(\Phi) \times\mathcal F_1(\Phi)$. We define a product $\ast$ in $\mathcal S(\Phi)$ by 
\begin{equation}\label{vcyrtsw5u}
(A^{(1)}(\xi), B^{(1)}(\xi))\ast (A^{(2)}(\xi), B^{(2)}(\xi))=(A(\xi), B(\xi)), 
\end{equation}
where
\begin{equation}\label{tsay4w}
 A(\xi)=A^{(1)}(B^{(2)}(\xi)) A^{(2)}(\xi), \quad B(\xi)=B^{(1)}(B^{(2)}(\xi)).
 \end{equation}
 As easily seen, the $\mathcal S(\Phi)$ equipped with product $\ast$ is a group. In particular, for $(A(\xi), B(\xi))\in\mathcal S(\Phi)$, its inverse is given by
\begin{equation}\label{vfyrds5ws5a}
(A(\xi), B(\xi))^{-1}=(A^{-1}(B^{\langle -1\rangle}(\xi)), \, B^{\langle -1\rangle}(\xi)).\end{equation}

 By identifying $A(\xi)\in\mathcal F_0(\Phi)$ and $B(\xi)\in\mathcal F_1(\Phi)$ with $(A(\xi), \xi)$ and $(1, B(\xi))$, respectively, we easily see that
 $\mathcal F_0(\Phi)$ is an abelian normal subgroup of $\mathcal S(\Phi)$, and $\mathcal F_1(\Phi)$ is a subgroup of $\mathcal S(\Phi)$. Furthermore, each $(A(\xi), B(\xi))\in \mathcal S(\Phi)$ admits a unique representation as a product of elements from $\mathcal F_0(\Phi)$ and $\mathcal F_1(\Phi)$:
 $$(A(\xi), B(\xi))=(A(B^{\langle-1\rangle}(\xi)), \xi)\ast(1, B(\xi)).$$
 Hence, $\mathcal S(\Phi)$ is the semidirect product of $\mathcal F_0(\Phi)$ and $\mathcal F_1(\Phi)$, i.e., $\mathcal S(\Phi):=\mathcal F_0(\Phi) \rtimes\mathcal F_1(\Phi)$.

Denote $\mathcal W(\Phi):=\mathcal W_0(\Phi)\times\mathcal W_1(\Phi)$, which is a complete l.c.s. The bijective map
\begin{equation}\label{vcstsreaea}
\mathcal T:\mathcal W(\Phi)\to \mathcal S(\Phi), \quad \mathcal T(\alpha(\xi), \beta(\xi))= (1+\alpha(\xi), \xi+\beta(\xi))
\end{equation}
determines a topology on $\mathcal S(\Phi)$.

\begin{proposition} \label{vtdfrdserse54s45sw}
The $\mathcal S(\Phi)$ is a Lie group modeled on $\mathcal W(\Phi)$ through the global parametrization $\mathcal T$. The corresponding Lie algebra, hence, can be identified with  $\mathcal W(\Phi)$. Under this identification, the  Lie bracket on $\mathcal W(\Phi)$ is given by formulas \eqref{cxtewu65}--\eqref{cfre6u4e}.
\end{proposition}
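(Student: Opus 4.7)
The group structure of $\mathcal S(\Phi)$ under $\ast$ has already been established through the semidirect product decomposition $\mathcal F_0(\Phi)\rtimes\mathcal F_1(\Phi)$. The plan is therefore to (i) verify that $\ast$ and the inversion map~\eqref{vfyrds5ws5a} are analytic with respect to the topology induced by $\mathcal T$, which together with the fact that $\mathcal T$ is a global parametrization will give $\mathcal S(\Phi)$ the structure of a Lie group modeled on $\mathcal W(\Phi)$; and then (ii) compute the Lie bracket on the tangent space $\mathcal W(\Phi)$ at the identity $(1,\xi)$ by expanding the group product to second order and antisymmetrizing, exactly as in the proof of Proposition~\ref{ccdyfytsfystfc}.

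For analyticity, I would decompose $\ast$ into its two constituent operations. The first component of \eqref{tsay4w} is the product in $\mathcal F_0(\Phi)$ of $A^{(1)}\circ B^{(2)}$ with $A^{(2)}$; the composition $A^{(1)}\circ B^{(2)}$ is given by the same combinatorial formula \eqref{vcxdeay543q}--\eqref{crtsdtrsew} as the composition in $\mathcal F_1(\Phi)$, so its analyticity follows by the argument used in the proof of Proposition~\ref{ccdyfytsfystfc}, via Lemmas~\ref{tfy7e6e}, \ref{gctessswe5}(i), \ref{cgtsrta}(i) and Remark~\ref{cxfet678i9}, and the subsequent multiplication is analytic by Proposition~\ref{ccfgrag543w}. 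The second component $B^{(1)}\circ B^{(2)}$ is analytic directly by Proposition~\ref{ccdyfytsfystfc}. Analyticity of inversion then follows from~\eqref{vfyrds5ws5a}, using the analyticity of $A\mapsto A^{-1}$ in $\mathcal F_0(\Phi)$ and $B\mapsto B^{\langle-1\rangle}$ in $\mathcal F_1(\Phi)$ (Propositions~\ref{ccfgrag543w} and~\ref{ccdyfytsfystfc}) and the composition analyticity just established.

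For the Lie bracket, fix $(\alpha^{(j)},\beta^{(j)})\in\mathcal W(\Phi)$, $j=1,2$, and $s_1,s_2\in\mathbb C$, and expand
\[
\mathcal T(s_1\alpha^{(1)},s_1\beta^{(1)})\ast\mathcal T(s_2\alpha^{(2)},s_2\beta^{(2)})
\]
modulo terms of order $\ge 3$ in $(s_1,s_2)$. The $\beta$-component is furnished by formula~\eqref{xear4aq435}, which gives $s_1\beta^{(1)}+s_2\beta^{(2)}+s_1s_2\,D_{\beta^{(2)}}\beta^{(1)}$ plus higher order terms. For the $\alpha$-component, using~\eqref{cfdst} one has $\alpha^{(1)}(\xi+s_2\beta^{(2)}(\xi))=\alpha^{(1)}(\xi)+s_2\,D_{\beta^{(2)}}\alpha^{(1)}(\xi)+O(s_2^2)$, whence multiplying by $1+s_2\alpha^{(2)}(\xi)$ produces
\[
1+s_1\alpha^{(1)}(\xi)+s_2\alpha^{(2)}(\xi)+s_1s_2\bigl(\alpha^{(1)}(\xi)\alpha^{(2)}(\xi)+D_{\beta^{(2)}}\alpha^{(1)}(\xi)\bigr)+\cdots.
\]
Antisymmetrizing in $(1)\leftrightarrow(2)$, as done to deduce~\eqref{cxteaaq} from~\eqref{xear4aq435}, makes the symmetric cross term $\alpha^{(1)}(\xi)\alpha^{(2)}(\xi)$ vanish, since the multiplicative product of scalar-valued formal tensor power series is commutative; what remains is $D_{\beta^{(2)}}\alpha^{(1)}-D_{\beta^{(1)}}\alpha^{(2)}$ for the $\alpha$-component and $D_{\beta^{(2)}}\beta^{(1)}-D_{\beta^{(1)}}\beta^{(2)}$ for the $\beta$-component, matching \eqref{bvcyed6}--\eqref{cfre6u4e}.

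The main obstacle will be the bookkeeping of the remainder terms in the joint $(s_1,s_2)$-expansion of the first component, so as to rigorously justify that only the pure $s_1s_2$ cross term contributes to the bracket through the global parametrization $\mathcal T$, as formalized in Subsection~\ref{app:LieAlgebra} of the Appendix. The central observation that simplifies the computation is the commutativity of multiplication in $\mathcal F_0(\Phi)$, which causes the new nontrivial contribution to the $\alpha$-component of the bracket to come entirely from the action of $\mathcal F_1(\Phi)$ on $\mathcal F_0(\Phi)$ by composition, rather than from the internal multiplicative structure of $\mathcal F_0(\Phi)$.
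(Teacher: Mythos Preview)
Your proposal is correct and follows essentially the same approach as the paper: reduce the Lie group structure to the analyticity of the ``mixed'' composition $(A,B)\mapsto A\circ B$ (handled by the same argument as for $\mathcal F_1(\Phi)$), and obtain the bracket by expanding the product to the $s_1s_2$ term and antisymmetrizing. The paper is terser---it simply says the $\alpha$-component computation is ``analogous to that of formula~\eqref{cxteaaq}''---whereas you helpfully make explicit that the symmetric cross term $\alpha^{(1)}\alpha^{(2)}$ vanishes upon antisymmetrization, but the strategy is the same.
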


\begin{proof} In view of Propositions~\ref{ccfgrag543w} and~\ref{ccdyfytsfystfc} and formulas \eqref{vcyrtsw5u}--\eqref{vcstsreaea}, to show that $\mathcal S(\Phi)$ is a Lie group, we only need to prove that the map
$\mathcal S(\Phi)\ni(A(\xi), B(\xi))\mapsto A(B(\xi))\in\mathcal F_0(\Phi)$
is analytic. But this can be easily done by analogy with the proof of the fact that the composition of formal tensor power series from $\mathcal F_1(\Phi)$ is analytic. 

In view of the definition of the product in $\mathcal S(\Phi)$, the fact that $\beta(\xi)$ in formula~\eqref{cxtewu65} is given by \eqref{cfre6u4e} follows immediately from \eqref{cxteaaq}. The proof of formula \eqref{bvcyed6} is analogous to that of formula~\eqref{cxteaaq}. 
\end{proof}

\begin{proposition}\label{cydsawa}
The exponential map
\begin{equation}\label{cfdt6ue}
\mathcal W(\Phi)\ni(\alpha(\xi), \beta(\xi))\mapsto\operatorname{EXP}(\alpha(\xi), \beta(\xi))\in\mathcal S(\Phi)
\end{equation}
exists and is given by
\begin{equation}\label{vtyrswu53w}
\operatorname{EXP}(\alpha(\xi), \beta(\xi))=\bigg(\exp\bigg[\int_0^1\alpha\big(\operatorname{Exp}(r\beta(\xi))\big)\, dr\bigg], \, \operatorname{Exp}(\beta(\xi))\bigg).
\end{equation}
The exponential map in \eqref{cfdt6ue} is bijective and we denote its inverse by
\begin{equation*}%\label{vcfsruiytr}
\mathcal S(\Phi)\ni(A(\xi), B(\xi))\mapsto \operatorname{LOG}(A(\xi), B(\xi))\in\mathcal W(\Phi).
\end{equation*}
Both maps $\operatorname{EXP}$ and $\operatorname{LOG}$ are analytic; hence, the exponential map $\operatorname{EXP}$  provides an equivalent global parametrization of $\mathcal S(\Phi)$. In particular, under the global parametrization $\operatorname{EXP}$, the Lie bracket on  $\mathcal W(\Phi)$ is still given by formulas \eqref{cxtewu65}--\eqref{cfre6u4e}.
\end{proposition}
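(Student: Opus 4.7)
The plan is to proceed in three main steps: construct the one-parameter subgroup of $\mathcal{S}(\Phi)$ whose time-$1$ value gives formula \eqref{vtyrswu53w}; invert this formula via a triangular recursion to obtain $\operatorname{LOG}$; and then collect analyticity from the previously-established propositions and appendix lemmas.

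For the construction, given $(\alpha(\xi), \beta(\xi)) \in \mathcal{W}(\Phi)$, I look for a smooth curve $c(t) = (A(t,\xi), B(t,\xi))$ in $\mathcal{S}(\Phi)$ satisfying $c(0) = (1,\xi)$, $c'(0) = (\alpha(\xi), \beta(\xi))$, and $c(s+t) = c(s) \ast c(t)$. By the group law \eqref{vcyrtsw5u}--\eqref{tsay4w}, the projection of $c$ onto $\mathcal{F}_1(\Phi)$ is itself a one-parameter subgroup with initial velocity $\beta$, so Proposition \ref{gfxsetser5} forces $B(t,\xi) = \operatorname{Exp}(t\beta(\xi))$. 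The $\mathcal{F}_0$-component then satisfies $A(s+t,\xi) = A(s, B(t,\xi)) \cdot A(t,\xi)$; differentiating in $s$ at $s = 0$ yields the linear ODE
\[
\partial_t A(t,\xi) \;=\; \alpha\bigl(\operatorname{Exp}(t\beta(\xi))\bigr)\cdot A(t,\xi), \qquad A(0,\xi) = 1.
\]
Because $\mathcal{F}_0(\Phi)$ is abelian (Proposition \ref{ccfgrag543w}) and Lemma \ref{dtstesrew5} allows integration of continuous curves in the complete l.c.s.\ $\mathcal{W}_0(\Phi)$, the unique solution is $A(t,\xi) = \exp\!\bigl(\int_0^t \alpha(\operatorname{Exp}(r\beta(\xi)))\,dr\bigr)$, and setting $t=1$ recovers \eqref{vtyrswu53w}. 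A direct check, using the one-parameter subgroup property of $\operatorname{Exp}$ in $\mathcal{F}_1(\Phi)$ and the abelianness of $\exp$ in $\mathcal{F}_0(\Phi)$, then confirms that $c$ is a genuine one-parameter subgroup.

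For bijectivity, given $(A(\xi), B(\xi)) \in \mathcal{S}(\Phi)$, Proposition \ref{gfxsetser5} immediately determines $\beta(\xi) := \operatorname{Log}(B(\xi))$ uniquely. The remaining equation $\int_0^1 \alpha(\operatorname{Exp}(r\beta(\xi)))\,dr = \log(A(\xi))$ for $\alpha(\xi) = \sum_{k\ge 1} \alpha_k\xi^{\otimes k}$ is solved coefficient-by-coefficient: expanding $\operatorname{Exp}(r\beta(\xi)) = \xi + \sum_{k\ge 2} \widetilde B_k(r)\xi^{\otimes k}$ and applying \eqref{vcxdeay543q}--\eqref{crtsdtrsew}, the $\xi^{\otimes k}$-coefficient on the left-hand side equals $\alpha_k$ plus a term depending only on $\alpha_1,\dots,\alpha_{k-1}$ and $\beta_2,\dots,\beta_k$ (the bare $\alpha_k$ arises because the only decomposition $i_1+\cdots+i_k=k$ with $i_j\ge 1$ is $i_j\equiv 1$, and $\widetilde B_1 = \mathbf{1}$). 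The resulting triangular system recursively determines $\alpha$ uniquely, yielding both the existence of the inverse $\operatorname{LOG}$ and an explicit recursion for it.

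Finally, analyticity of $\operatorname{EXP}$ follows by composing known analytic maps: $\operatorname{Exp}$ (Proposition \ref{gfxsetser5}); composition of formal tensor power series, as exploited in the proofs of Propositions \ref{ccdyfytsfystfc} and \ref{vtdfrdserse54s45sw} via Lemmas \ref{tfy7e6e}, \ref{gctessswe5}, \ref{cgtsrta}; Riemann integration in a parameter (Lemma \ref{ctrsw6u4}, as used in Proposition \ref{vytde6e6}); and $\exp$ in $\mathcal{F}_0(\Phi)$ (Proposition \ref{cxtstew5}). Analyticity of $\operatorname{LOG}$ then follows from the recursion of the previous paragraph combined with analyticity of $\log$ and $\operatorname{Log}$, since each $\alpha_k$ is expressed as an analytic function of finitely many coefficients of $A$ and $B$. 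Lemma \ref{analytic_reparametrization} in the Appendix then promotes $\operatorname{EXP}$ to an equivalent global parametrization of $\mathcal{S}(\Phi)$, and the final assertion about the Lie bracket is automatic since equivalent global parametrizations induce identical brackets on the model space $\mathcal{W}(\Phi)$, already computed in Proposition \ref{vtdfrdserse54s45sw}. I expect the main obstacle to be the analyticity step: tracking analyticity through the nested compositions of (LB)-space valued maps — pre-composition by $\operatorname{Exp}(r\beta(\xi))$ viewed as a parameter family, Riemann integration into $\mathcal{W}_0(\Phi)$, and the outer $\exp$ — is straightforward in principle but requires careful bookkeeping in parallel with the proofs of Propositions \ref{vytde6e6} and \ref{vcte6u}.
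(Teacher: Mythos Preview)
Your proposal is correct and follows essentially the same route as the paper: construct the one-parameter subgroup $(\psi(t,\xi),\theta(t,\xi))$ with $\theta(t,\xi)=\operatorname{Exp}(t\beta(\xi))$ and $\psi(t,\xi)=\exp\bigl[\int_0^t\alpha(\theta(r,\xi))\,dr\bigr]$, invert via the triangular recursion for the $\alpha_k$, and assemble analyticity from Propositions~\ref{cxtstew5}, \ref{gfxsetser5} and the appendix lemmas before invoking Lemma~\ref{analytic_reparametrization}. The only cosmetic difference is that you derive the formula for $A(t,\xi)$ by differentiating the cocycle identity to obtain an ODE, whereas the paper simply posits the formula and verifies the semigroup law directly.
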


\begin{proof} Let $(\alpha(\xi), \beta(\xi))\in\mathcal W(\Phi)$. For $t\in\R$, define $\theta(t, \xi):=\operatorname{Exp}(t\beta(\xi))\in\mathcal F_1(\Phi)$ and
\begin{equation}\label{ctru}
\psi(t, \xi):=\exp\bigg[\int_0^t \alpha(\theta(r, \xi))\, dr\bigg]\in\mathcal F_0(\Phi).
\end{equation}
We state that $\big(\psi(t, \xi), \theta(t, \xi)\big)_{t\in\R}$ is a one-parameter subgroup of $\mathcal S(\Phi)$. In view of~\eqref{vcyrtsw5u} and~\eqref{tsay4w}, we have, for $s, t\in\R$, 
$$(\psi(s, \xi), \theta(s, \xi))\ast (\psi(t, \xi), \theta(t, \xi))=( A(s, t, \xi), \theta(s+t, \xi)), $$
where
\begin{align*}
 A(s, t, \xi)&=\exp\bigg[\int_0^{s} \alpha(\theta(r, \theta(t, \xi)))\, dr+\int_0^{t} \alpha(\theta(r, \xi))\, dr\bigg]\\
&=\exp\bigg[\int_0^{s} \alpha(\theta(r+t, \xi))\, dr+\int_0^{t} \alpha(\theta(r, \xi))\, dr\bigg]\\
&=\exp\bigg[\int_{t}^{s+t} \alpha(\theta(r, \xi))\, dr+\int_0^{t} \alpha(\theta(r, \xi))\, dr\bigg]=\psi(s+t, \xi).
\end{align*}
Since $\R\ni t\mapsto \theta(t, \xi)\in \mathcal F_1(\Phi)$ is a smooth curve, formula \eqref{ctru} implies that $\R\ni t \mapsto \psi(t, \xi)\in \mathcal F_0(\Phi)$ is a smooth curve. Thus, $\big(\psi(t, \xi), \theta(t, \xi)\big)_{t\in\R}$ is indeed a one-parameter subgroup of $\mathcal S(\Phi)$. By construction, $\frac{d}{dt}\big|_{t=0}\theta(t, \xi)=\beta(\xi)$. 
Furthermore, $\frac{d}{dt}\psi(t, \xi)=\psi(t, \xi)\alpha(\theta(t, \xi))$, 
which implies $\frac{d}{dt}\big|_{t=0}\psi(t, \xi)=\alpha(\xi)$. 
Hence, formula \eqref{vtyrswu53w} is proven.

Next, we will show that, for an arbitrary $(A(\xi), B(\xi))\in\mathcal S(\Phi)$, there exists a unique $(\alpha(\xi), \beta(\xi))\in\mathcal W(\Phi)$ such that $\operatorname{EXP}(\alpha(\xi), \beta(\xi))=(A(\xi), B(\xi))$, i.e., $(\alpha(\xi), \beta(\xi))=\operatorname{LOG}(A(\xi), B(\xi))$. 

By \eqref{vcyrtsw5u}, \eqref{tsay4w} and Proposition~\ref{gfxsetser5}, we must have $\beta(\xi)=\operatorname{Log}(B(\xi))$. Let $\theta(t, \xi)=\sum_{k=1}^\infty \theta_k(t)\xi^{\otimes k}:=\operatorname{Exp}(t\beta(\xi))$ ($t\in\R$) be the corresponding one-parameter subgroup of $\mathcal F_1(\Phi)$. Define $\tilde \alpha(\xi)=\sum_{k=1}^\infty\tilde \alpha_k\xi^{\otimes k}:=\log(A(\xi))\in\mathcal W_0(\Phi)$. In view of~\eqref{vtyrswu53w}, it is sufficient to prove that there exists a unique $\alpha(\xi)=\sum_{k=1}^\infty \alpha_k\xi^{\otimes k}\in \mathcal W_0(\Phi)$ such that 
$
\tilde \alpha(\xi)=\int_0^1 \alpha(\theta(r, \xi))\, dr$. But the latter equation is equivalent to
$$\tilde\alpha_k=\sum_{l=1}^k \alpha_l \sum_{\substack{i_1, \dots, i_l\in\mathbb N\\i_1+\dots+i_l=k}}\int_0^1 \theta
_{i_1}(r)\odot \theta_{i_2}(r)\odot\cdots\odot \theta_{i_l}(r)\, dr, \quad k\in\mathbb N.$$
The unique solution to this system of equations is given by the recurrence formula
$$\alpha_1=\tilde \alpha_1, \quad\alpha_k=\tilde \alpha_k-\sum_{l=1}^{k-1} \alpha_l \sum_{\substack{i_1, \dots, i_l\in\mathbb N\\i_1+\dots+i_l=k}}\int_0^1 \theta
_{i_1}(r)\odot \theta_{i_2}(r)\odot\dots\odot \theta_{i_l}(r)\, dr\quad \text{for }k\ge2.$$

That both maps $\operatorname{EXP}$ and $\operatorname{LOG}$ are analytic easily follows from Propositions~\ref{cxtstew5} and~\ref{gfxsetser5} and their proofs; see, in particular, formula \eqref{cxera4y5} with $\gamma_k=0$. Finally, the fact that the exponential map $\operatorname{EXP}$ provides an equivalent global parametrization of $\mathcal S(\Phi)$ follows from Lemma~\ref{analytic_reparametrization} in the Appendix. 
\end{proof}

% \begin{corollary}\label{fsesa4acdrd} When $\mathcal S(\Phi)$ is globally parametrized through the map in~\eqref{vcfsruiytr}, the Lie bracket on $\mathcal W(\Phi)$ is still given by formulas \eqref{cxtewu65}--\eqref{cfre6u4e}.
% \end{corollary}

\begin{proposition}\label{acufytFTCD}
The Lie group $\mathcal S(\Phi)$ is regular.
\end{proposition}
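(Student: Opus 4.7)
My plan is to imitate the proofs of Propositions~\ref{vytde6e6} and~\ref{vcte6u}, exploiting the semidirect product structure $\mathcal{S}(\Phi)=\mathcal{F}_0(\Phi)\rtimes\mathcal{F}_1(\Phi)$. First, for $(A(\xi),B(\xi))\in\mathcal{S}(\Phi)$ and $(\alpha(\xi),\beta(\xi))\in\mathcal{W}(\Phi)$, I would compute the derivative of the right-translated parametrization. Using \eqref{vcyrtsw5u}--\eqref{tsay4w} together with the computations \eqref{cfxdtrsts5tsw} and \eqref{cxdreaw4t} from the preceding proofs, a direct calculation yields
\begin{equation*}
\frac{d}{ds}\Big|_{s=0}\bigl[(A,B)\ast(1+s\alpha,\xi+s\beta)-(1,\xi)\bigr]=\bigl(A(\xi)\alpha(\xi)+D_{\beta}A(\xi),\,D_{\beta}B(\xi)\bigr)\in\mathcal{W}(\Phi).
\end{equation*}
Consequently, given a smooth curve $[0,1]\ni t\mapsto(\alpha(t,\xi),\beta(t,\xi))\in\mathcal{W}(\Phi)$, proving regularity reduces to finding a unique smooth curve $[0,1]\ni t\mapsto(A(t,\xi),B(t,\xi))\in\mathcal{S}(\Phi)$ with $(A(0,\xi),B(0,\xi))=(1,\xi)$ that solves the coupled system
\begin{align*}
A'(t,\xi)&=A(t,\xi)\alpha(t,\xi)+D_{\beta(t,\cdot)}A(t,\xi),\\
B'(t,\xi)&=D_{\beta(t,\cdot)}B(t,\xi),
\end{align*}
and then showing that the endpoint map $(\alpha(\cdot),\beta(\cdot))\mapsto(A(1,\xi),B(1,\xi))$ is analytic.

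The $B$-equation is precisely the one already treated in Proposition~\ref{vcte6u}, so it admits a unique smooth solution $B(t,\xi)\in\mathcal{F}_1(\Phi)$ depending analytically on $\beta(\cdot)$. Substituting this $B(t,\xi)$ into the $A$-equation and expanding in homogeneous degree via \eqref{cfrtsw5yw}--\eqref{bftyde6w} and \eqref{cfdst} decouples it into the triangular recurrence $A_1'(t)=\alpha_1(t)$ and, for $k\ge2$,
\begin{equation*}
A_k'(t)=\alpha_k(t)+\sum_{l=1}^{k-1}A_l(t)\odot\alpha_{k-l}(t)+\sum_{l=1}^{k-1}l\,A_l(t)\bigl(\mathbf 1_{l-1}\odot\beta_{k-l+1}(t)\bigr),
\end{equation*}
where the right-hand side depends only on $A_1(t),\dots,A_{k-1}(t)$ together with finitely many $\alpha_j(t)$ and $\beta_j(t)$. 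By Lemma~\ref{dtstesrew5}(ii), this system has a unique solution obtained by successive Riemann integration, and the resulting curve $t\mapsto A(t,\xi)$ is smooth.

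To complete the proof I would verify analyticity of the endpoint map. Analyticity of $\beta(\cdot)\mapsto B(1,\xi)$ is Proposition~\ref{vcte6u}. Analyticity of $(\alpha(\cdot),\beta(\cdot))\mapsto A(1,\xi)$ then follows by induction on $k$, using Lemmas~\ref{tfy7e6e}, \ref{cgtsrta}(ii) (with Remark~\ref{cxfet678i9}), and~\ref{ctrsw6u4}, exactly as in the final step of the proof of Proposition~\ref{vytde6e6}. I expect no substantive obstacle: the only new ingredient beyond Proposition~\ref{vytde6e6} is the jointly analytic dependence of $l\,A_l(\mathbf 1_{l-1}\odot\beta_{k-l+1})$ on $(A_l,\beta_{k-l+1})$, which is immediate from Lemma~\ref{cgtsrta}, so the recursion closes at each step.
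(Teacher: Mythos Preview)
Your proposal is correct and follows essentially the same approach as the paper: the paper computes the same derivative \eqref{vctesese}, arrives at the same coupled system \eqref{teww5w}--\eqref{vtyde6i4}, solves the $B$-equation via \eqref{dstere5w5w}, and then gives exactly your recurrence for $A_k(t)$ in integral form before referring back to Propositions~\ref{vytde6e6} and~\ref{vcte6u} for analyticity. One minor point: for the term $l\,A_l(\mathbf 1_{l-1}\odot\beta_{k-l+1})$ you will also need Lemma~\ref{gctessswe5}(ii) (for the outer operator product), not just Lemma~\ref{cgtsrta}(ii).
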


\begin{proof} 
 Recall that the map $\mathcal T$ provides a global parametrization of $\mathcal S(\Phi)$.
  Let $(A(\xi), B(\xi))\in\mathcal S(\Phi)$ and $(\alpha(\xi), \beta(\xi))\in\mathcal W(\Phi)$. Similarly to~\eqref{cxrear4aqaa}, we have, for $s\in\mathbb C$, 
\begin{align*}
&(A(\xi), B(\xi))\ast(1+s\alpha(\xi), \xi+s\beta(\xi))=\big(A(\xi+s\beta(\xi))(1+s\alpha(\xi)), \, B(\xi+s\beta(\xi))\big)\\
&\quad =\big(A(\xi)+s D_{\beta}A(\xi)+sA(\xi)\alpha(\xi)+R_1(s, \xi), \, B(\xi)+s D_{\beta}B(\xi)+R_2(s, \xi)\big), 
\end{align*}
where $R_1(s, \xi)$ and $R_2(s, \xi)$ are the sums of the higher order terms in $s$.  Hence, 
\begin{equation}\label{vctesese}
\frac{d}{ds}\Big|_{s=0}\mathcal T^{-1}\big((A(\xi), B(\xi))\ast(1+s\alpha(\xi), \xi+s\beta(\xi))\big)=\big( D_{\beta}A(\xi)+A(\xi)\alpha(\xi), \, D_{\beta}B(\xi)\big). 
\end{equation}

Let now 
$$[0, 1]\ni t\mapsto (\alpha(t, \xi), \beta(t, \xi))=\bigg(\sum_{k=1}^\infty \alpha_k(t)\xi^{\otimes k}, \sum_{k=2}^\infty\beta_k(t)\xi^{\otimes k}\bigg)\in\mathcal W(\Phi)$$
 be a smooth curve. In view of \eqref{vctesese}, we have to find a smooth curve 
 $$[0, 1]\ni t\mapsto (A(t, \xi), B(t, \xi))=\bigg(1+\sum_{k=1}^\infty A_k(t)\xi^{\otimes k}, \, \xi+\sum_{k=2}^\infty B_k(t)\xi^{\otimes k}\bigg)\in\mathcal S(\Phi)$$
 that satisfies
 \begin{align}
 A'(t, \xi)&= \big(D_{\beta(t, \cdot)}A(t, \cdot)\big)(\xi)+A(t, \xi)\alpha(t, \xi), \label{teww5w}\\
 B'(t, \xi)&= \big(D_{\beta(t, \cdot)}B(t, \cdot)\big)(\xi).\label{vtyde6i4}
 \end{align}
Let $B(t, \xi)$ be the solution to equation \eqref{vtyde6i4} that is given by \eqref{dstere5w5w}. 
Similarly to~\eqref{yufr7r7}, equation \eqref{teww5w} can be written in the form
\begin{equation}
\sum_{k=1}^\infty A_k'(t)\xi^{\otimes k}= A_1(t)\beta(t, \xi)+\sum_{k=2}^\infty kA_k(t)\big(\xi^{\otimes(k-1)}\odot\beta(t, \xi)\big)+A(t, \xi)\alpha(t, \xi).\label{bvgutr7ir4}
\end{equation}
Similarly to \eqref{vydy6e}, \eqref{dstere5w5w}, a solution to equation \eqref{bvgutr7ir4} is given by the recurrence formula
\begin{align*}
A_1(t)&=\int_0^t \alpha_1(r)\, dr, \\
A_k(t)&= \int_0^t\bigg(\sum_{i=1}^{k-1}iA_i(r)(\mathbf 1_{i-1}\odot \beta_{k-i+1}(r))+\alpha_k(r)+\sum_{i=1}^{k-1} A_i(r)\odot\alpha_{k-i}(r)\bigg)dr, \quad k\ge2.
\end{align*}
The rest of the proof is analogous to that of Propositions~\ref{vytde6e6} and \ref{vcte6u}.
\end{proof}

\section{The Lie group of operators defining monic\\ polynomial sequences}\label{tdr6e65e48}

\subsection{Monic polynomial sequences}
 For any finite sequence $(f^{(k)})_{k=0}^K$ with $f^{(k)}\in\Phi^{\hat\odot k}$ and $K\in\mathbb N_0$, the function
\begin{equation}\label{vgdyeswse}
p(\omega)=\sum_{k=0}^K\langle \omega^{\otimes k}, f^{(k)}\rangle, \quad\omega\in\Phi', 
\end{equation}
is called a {\it polynomial on $\Phi'$}. (In \eqref{vgdyeswse}, we denote $\langle \omega^{\otimes 0}, f^{(0)}\rangle{:= }f^{(0)}$.) If in \eqref{vgdyeswse} $f^{(K)}\ne0$, then we say that the {\it degree of the polynomial $p$} is $K$. We denote by $\mathcal P(\Phi')$ and $\mathcal P^{(n)}(\Phi')$ the vector space of all polynomials on $\Phi'$ and the vector space of all polynomials on $\Phi'$ of degree $\le n$, respectively.

Denote by $\mathcal F_{\mathrm{fin}}(\Phi)$ the vector space of all sequences $f=(f^{(k)})_{k=0}^\infty$ such that $f^{(k)}\in\Phi^{\hat\odot k}$ and, for some $K\in\mathbb N_0$ (depending on $f$), we have $f^{(k)}=0$ for all $k>K$. We equip $\mathcal F_{\mathrm{fin}}(\Phi)$ with the topology of the locally convex direct sum of the spaces $\Phi^{\hat\odot k}$, see e.g.\ \cite[p.~55]{Schaefer}. Since each (LB)-space $\Phi^{\hat\odot k}$ is complete, by e.g.\ \cite[Ch.II, \S~6.2]{Schaefer}, $\mathcal F_{\mathrm{fin}}(\Phi)$ is complete.

Define a linear map $I:\mathcal F_{\mathrm{fin}}(\Phi)\to \mathcal P(\Phi')$ by
$(If)(\omega):=\sum_{k=0}^\infty \langle \omega^{\otimes k}, f^{(k)}\rangle$
for $f=(f^{(k)})_{k=0}^\infty\in \mathcal F_{\mathrm{fin}}(\Phi)$. (The series in the definition of $I$ is in fact finite.) It easily follows from Lemma~\ref{fsrea45y} that $I$ is a bijection. The map $I$ determines a complete locally convex topology on $\mathcal P(\Phi')$.

We denote by $\mathbb U(\Phi)$ the vector space of all continuous linear operators $P\in\mathcal L(\mathcal P(\Phi'))$ that satisfy the condition $P\big(\mathcal P^{(n)}(\Phi')\big)\subset \mathcal P^{(n)}(\Phi')$ for all $n\in\mathbb N_0$;
equivalently each $P\in\mathbb U(\Phi)$ acts as follows:
\begin{equation}\label{dr6e6}
\big(P\langle\cdot^{\otimes k}, f^{(k)}\rangle\big)(\omega)=\sum_{i=0}^k \langle\omega^{\otimes i}, P_{ik}f^{(k)}\rangle, \quad f^{(k)}\in\Phi^{\hat\odot k}, \ k\in\mathbb N_0, \end{equation}
where $P_{ik}\in \mathcal L(\Phi^{\hat\odot k}, \Phi^{\hat\odot i})$. By an abuse of notation, we will also think of $P\in\mathbb U(\Phi)$ as the infinite upper triangular block matrix $[P_{ik}]_{i, k\in\mathbb N_0}$ in which $P_{ik}\in\mathcal L(\Phi^{\hat\odot k}, \Phi^{\hat\odot i})$. We equip~$\mathbb U(\Phi)$ with the complete locally convex topology of the product of the spaces $\mathcal L(\Phi^{\hat\odot k}, \Phi^{\hat\odot i})$ with $0\le i\le k$. 
%The $\mathbb U$ is a regular topological vector space, see e.g.\ \cite[p.~16]{Schaefer}. 

We denote by $\mathbb M(\Phi)$ the subset of $\mathbb U(\Phi)$ that consists of all $P=[P_{ik}]_{i, k\in\mathbb N_0}\in\mathbb U(\Phi)$ with $P_{kk}=\mathbf 1_k$ for all $k\in\mathbb N_0$. For $P\in \mathbb M(\Phi)$ and $f^{(k)}\in\Phi^{\hat\odot k}$ ($k\in\mathbb N_0$), we denote by $P^{(k)}(\omega, f^{(k)})$ the polynomial on the right-hand side of formula~\eqref{dr6e6}. Then we call $(P^{(k)}(\omega, f^{(k)}))_{f^{(k)}\in\Phi^{\hat\odot k}, \, k\in\mathbb N_0}$ a {\it monic polynomial sequence on~$\Phi'$ (corresponding to the operator $P$)}.

\begin{remark}\label{ghcydydsszza} It follows from Remark~\ref{cfsthewdd} that an operator $P\in\mathbb M(\Phi)$ is uniquely cha\-ra\-cterized by the polynomials $P^{(k)}(\omega, \xi^{\otimes k})$ with $\xi\in\Phi$ and $k\in\mathbb N_0$. \end{remark}

Let $\mathbb V(\Phi)$ denote the closed vector subspace of $\mathbb U(\Phi)$ that consists of all $V=[V_{ik}]_{i, k\in\mathbb N_0}\in\mathbb U(\Phi)$ such that $V_{kk}=\mathbf 0$
for all $k\in\mathbb N_0$. Thus, $\mathbb V(\Phi)$ is the space of all continuous linear operators $V\in\mathcal L(\mathcal P(\Phi'))$ that satisfy the condition
\begin{equation}\label{cxzraw4t}
 V\big(\mathcal P^{(n)}(\Phi')\big)\subset \mathcal P^{(n-1)}(\Phi'), \quad n\in\mathbb N_0, 
 \end{equation} 
where $\mathcal P^{(-1)}(\Phi'):=\{0\}$. Note that $\mathbb V(\Phi)$ is equipped with the topology of the topological product of the spaces $\mathcal L(\Phi^{\hat\odot k}, \Phi^{\hat\odot i})$ with $0\le i<k$.

Consider the bijective map 
\begin{equation}\label{cxdzdeszesaease}\mathbb V(\Phi)\ni V\mapsto \mathbf 1+V\in \mathbb M(\Phi),\end{equation}  where $\mathbf 1$ is the identity operator in $\mathcal P(\Phi')$. This map determines a topology on~$\mathbb M(\Phi)$. Thus, $\mathbb M(\Phi)$ is a manifold modeled on the complete l.c.s.\ $\mathbb V(\Phi)$ through the global parametrization \eqref{cxdzdeszesaease}. Note that the constructed topology on $\mathbb M(\Phi)$ coincides with the topology on $\mathbb M(\Phi)$ induced by that on $\mathbb U(\Phi)$.

\subsection{$\mathbb M(\Phi)$ as a Lie group}

Let $P^{(l)}=[P^{(l)}_{ik}]_{i, k\in\mathbb N_0}\in\mathbb U(\Phi)$ ($l=1, 2$), and let $P:=P^{(1)}P^{(2)}$ be the usual product (composition) of the continuous linear operators $P^{(1)}$ and $P^{(2)}$. As easily seen, $P\in\mathbb U(\Phi)$ and $[P_{ik}]_{i, k\in\mathbb N_0}$, the block matrix of~$P$, is equal to the product of the block matrices of $P^{(1)}$ and $P^{(2)}$, i.e., 
\begin{equation}\label{rtse5wu656}
P_{ik}=\sum_{j=i}^kP^{(1)}_{ij}P^{(2)}_{jk}, \quad i, k\in\mathbb N_0.\end{equation}

\begin{proposition}\label{fgeraq}
The $\mathbb M(\Phi)$ is a Lie group modeled on $\mathbb V(\Phi)$ through the global parametrization   \eqref{cxdzdeszesaease}. The corresponding Lie algebra, hence, can be identified with  $\mathbb V(\Phi)$. Under this identification, the  Lie bracket on $\mathbb V(\Phi)$ is given by
\begin{equation}\label{xra}
[V^{(1)}, V^{(2)}]=V^{(1)}V^{(2)}-V^{(2)}V^{(1)}, \quad V^{(1)}, V^{(2)}\in\mathbb V(\Phi).
\end{equation}
\end{proposition}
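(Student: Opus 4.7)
The plan is to verify the group axioms for $\mathbb M(\Phi)$, then show that the group operations are analytic in the global parametrization \eqref{cxdzdeszesaease}, and finally compute the Lie bracket on the tangent space at the identity via the formula recalled in the Appendix.

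First, I would show that $\mathbb M(\Phi)$ is a group. Given $P^{(1)}, P^{(2)} \in \mathbb M(\Phi)$, formula \eqref{rtse5wu656} gives the block entries of $P = P^{(1)}P^{(2)}$, and inspection of the diagonal shows $P_{kk} = P^{(1)}_{kk}P^{(2)}_{kk} = \mathbf 1_k$, so $P \in \mathbb M(\Phi)$. For the inverse of $P \in \mathbb M(\Phi)$, I seek $\tilde P = [\tilde P_{ik}]_{i,k\in\mathbb N_0} \in \mathbb M(\Phi)$ such that $P\tilde P = \mathbf 1$. Formula \eqref{rtse5wu656} forces $\tilde P_{kk} = \mathbf 1_k$ and, for $0\le i<k$, the recursion
\begin{equation*}
\tilde P_{ik} = -\sum_{j=i+1}^k P_{ij}\tilde P_{jk},
\end{equation*}
which is solved by backward induction on $k-i$. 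The same argument applied on the other side (or using that $\mathbb M(\Phi)$ has no nontrivial idempotents near the identity for this construction) shows $\tilde P P = \mathbf 1$, so $\tilde P = P^{-1}$.

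Next, I would establish that the multiplication map $\mathbb M(\Phi)^2 \to \mathbb M(\Phi)$ and the inversion map $\mathbb M(\Phi) \to \mathbb M(\Phi)$ are analytic, by analogy with Propositions~\ref{ccfgrag543w} and \ref{ccdyfytsfystfc}. By the topology of $\mathbb M(\Phi)$ (the product of the spaces $\mathcal L(\Phi^{\hat\odot k}, \Phi^{\hat\odot i})$ with $0\le i<k$) and Lemma~\ref{tfy7e6e}, it suffices to verify the analyticity of each matrix-entry map. Formula \eqref{rtse5wu656} exhibits each entry of the product as a finite sum of compositions of operators between the spaces $\Phi^{\hat\odot n}$; hence Lemma~\ref{gctessswe5}~(i) (analyticity of composition) combined with Lemma~\ref{cgtsrta}~(i) (analyticity of sums) yields the required analyticity. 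For inversion, the recursion above expresses each $\tilde P_{ik}$ as a polynomial (in the sense of composition) of entries $P_{lm}$ with $m-l\le k-i$ and of entries $\tilde P_{jk}$ with $j>i$; by backward induction on $k-i$, each $\tilde P_{ik}$ is an analytic function of the entries of $P$. This proves that $\mathbb M(\Phi)$ is a Lie group modeled on $\mathbb V(\Phi)$.

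Finally, to compute the Lie bracket I would apply the formula from Subsection~\ref{app:LieAlgebra} of the Appendix: in the global parametrization \eqref{cxdzdeszesaease}, the Lie bracket at the identity is read off from the mixed second-order term of $(\mathbf 1 + s_1V^{(1)})(\mathbf 1 + s_2V^{(2)})$. A direct expansion, using that $V^{(1)}V^{(2)}\in\mathbb V(\Phi)$ since $V^{(j)}$ maps $\mathcal P^{(n)}(\Phi')$ into $\mathcal P^{(n-1)}(\Phi')$, gives
\begin{equation*}
(\mathbf 1 + s_1V^{(1)})(\mathbf 1 + s_2V^{(2)}) - \mathbf 1 = s_1V^{(1)} + s_2V^{(2)} + s_1s_2\, V^{(1)}V^{(2)},
\end{equation*}
with no higher-order remainder. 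Subtracting the analogous expression with $V^{(1)}, V^{(2)}$ interchanged and extracting the coefficient of $s_1s_2$ yields \eqref{xra}.

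The main obstacle is the analyticity of the multiplication and inversion maps on $\mathbb M(\Phi)$: one must carefully lift the analyticity lemmas from the appendix (stated for maps into Banach tensor products) to the (LB)-valued setting of $\mathcal L(\Phi^{\hat\odot k}, \Phi^{\hat\odot i})$, just as was done in the proofs of Propositions~\ref{ccfgrag543w} and \ref{ccdyfytsfystfc}. The algebraic content (group axioms, Lie bracket formula) is essentially the classical matrix Lie group computation transplanted to block-upper-unitriangular matrices whose entries are continuous linear operators between (LB)-spaces.
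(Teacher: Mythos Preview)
Your proposal is correct and follows essentially the same route as the paper: closure under multiplication via \eqref{rtse5wu656}, construction of a one-sided inverse by the obvious recursion (the paper does the left inverse and matches it with the right inverse via associativity, which is cleaner than your idempotent remark), analyticity of each block entry via Lemma~\ref{tfy7e6e} and Lemma~\ref{gctessswe5}~(i), and the Lie bracket from the expansion of $(\mathbf 1+s_1V^{(1)})(\mathbf 1+s_2V^{(2)})-\mathbf 1$. One small slip: Lemma~\ref{cgtsrta}~(i) concerns symmetric tensor products, not sums; it is not needed here, since finite sums of analytic maps are trivially analytic and the paper invokes only Lemma~\ref{gctessswe5}~(i) (together with Remark~\ref{gcyd6ue6}).
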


\begin{proof} Formula \eqref{rtse5wu656} implies that $\mathbb M(\Phi)$ is closed under multiplication of operators. 
The identity operator $\mathbf 1$ in $\mathcal P(\Phi')$ is obviously an element of $\mathbb M(\Phi)$. Thus, to prove that $\mathbb M(\Phi)$ is a group, we only need to show that each $P=[P_{ik}]_{i, k\in\mathbb N_0}\in\mathbb M(\Phi)$ has inverse $P^{-1}\in\mathbb M(\Phi)$. 

We state that there exists $Q=[Q_{ik}]_{i, k\in\mathbb N_0}\in
\mathbb M(\Phi)$ such that $QP=\mathbf 1$. Indeed, formula \eqref{rtse5wu656} implies that $Q_{ii}=\mathbf 1_i$ for all $i\in\mathbb N_0$, and for each fixed $i\in\mathbb N_0$, we have the recurrence formula 
\begin{equation}\label{s4aq43q5}
Q_{i\, i+1}=-P_{i\, i+1}, \quad Q_{ik}=-P_{ik}-\sum_{j=i+1}^{k-1}Q_{ij}P_{jk}, \quad k\ge i+2.
\end{equation}
 Similarly, we find $R=[R_{ik}]_{i, k\in\mathbb N_0}\in \mathbb M(\Phi)$ such that $PR=\mathbf 1$. In this case, $R_{kk}=\mathbf 1_k$ for all $k\in\mathbb N_0$, and for each fixed $k\in\mathbb N$, we have the recurrence formula
 $$ R_{k-1\, k}=-P_{k-1\, k}, \quad R_{ik}=-P_{ik}-\sum_{j=i+1}^{k-1}P_{ij}R_{jk}, \quad i=k-2, k-3, \dots, 0.$$
Finally, the associativity of the product of linear operators implies $Q=R=P^{-1}$.

 Next, to show that $\mathbb M(\Phi)$ is a Lie group, we have to prove that the maps 
\begin{equation*}
\mathbb M(\Phi)^2\ni(P^{(1)}, P^{(2)})\mapsto P^{(1)}P^{(2)}\in\mathbb M(\Phi), \quad \mathbb M(\Phi)\ni P\mapsto P^{-1}\in\mathbb M(\Phi)
\end{equation*}
are analytic. For the former map, this follows from formula \eqref{rtse5wu656} and Lemmas~\ref{tfy7e6e} and~\ref{gctessswe5} (i) (see also Remark~\ref{gcyd6ue6}). For the latter map, we similarly use formula \eqref{s4aq43q5}, which implies, by induction, that 
 $Q_{ik}$ is an analytic function of $P_{jl}$ with $i\le j<l\le k$.

Thus, $\mathbb M(\Phi)$ is a  Lie group and the Lie algebra of $\mathbb M(\Phi)$ can be identified with~$\mathbb V(\Phi)$. We now calculate the Lie bracket on $\mathbb V(\Phi)$. For any $s_1, s_2\in\mathbb C$ and $V^{(1)}, V^{(2)}\in\mathbb V(\Phi)$, we have 
$$(\mathbf 1+s_1V^{(1)})(\mathbf 1+s_2V^{(2)})-\mathbf 1= s_1V^{(1)}+s_2V^{(2)}+s_1s_2V^{(1)}V^{(2)}, $$
which implies \eqref{xra}.
\end{proof}

\begin{remark}\label{gcdtsw5qy}
It follows from the recurrence relation \eqref{s4aq43q5} that if $P=[P_{ik}]_{i, k\in\mathbb N_0}\in\mathbb M(\Phi)$ and $Q=[Q_{ik}]_{i, k\in\mathbb N_0}=P^{-1}$, then each $Q_{ik}$ is a linear combination of $P_{ik}$ and operators of the form $P_{ij_1}P_{j_1j_2}\dotsm P_{j_lk}$ with $i<j_1<j_2<\dots<j_l<k$.
\end{remark}

\begin{proposition}\label{gfstew5}
The exponential map for $\mathbb M(\Phi)$ exists and is given by
\begin{equation}\label{vctsaq45678}
\mathbb V(\Phi)\ni V\mapsto\exp(V)=\mathbf 1+\sum_{n=1}^\infty\frac1{n!}V^n\in\mathbb M(\Phi).\end{equation}
This is a bijective map and its inverse is given by
\begin{equation}\label{vctre5y67}
\mathbb M(\Phi)\ni P\mapsto\log(P)=\sum_{n=1}^\infty\frac{(-1)^{n+1}}n\, (P-\mathbf 1)^n\in \mathbb V(\Phi).
\end{equation}
  The maps in formulas \eqref{vctsaq45678} and \eqref{vctre5y67} are analytic; hence, the exponential map in~\eqref{vctsaq45678} provides an equivalent global parametrization of $\mathbb M(\Phi)$. In particular,  under the global parametrization $\exp$, the   Lie bracket on $\mathbb V(\Phi)$ is still given by \eqref{xra}.
\end{proposition}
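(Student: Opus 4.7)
The cornerstone of the plan is the observation that every $V\in\mathbb V(\Phi)$ acts locally nilpotently on $\mathcal P(\Phi')$: by \eqref{cxzraw4t} we have $V(\mathcal P^{(n)}(\Phi'))\subset\mathcal P^{(n-1)}(\Phi')$, so $V^j$ annihilates $\mathcal P^{(n)}(\Phi')$ whenever $j>n$. In block-matrix terms this is just $(V^j)_{ik}=0$ for $k-i<j$, and consequently, for each pair $(i,k)$ with $0\le i<k$, the series defining $\exp(V)_{ik}$ collapses to the finite sum
\[
\exp(V)_{ik}=\sum_{n=1}^{k-i}\frac{1}{n!}\sum_{\substack{j_0,\dots,j_n\in\mathbb N_0\\ i=j_0<j_1<\dots<j_n=k}}V_{j_0j_1}V_{j_1j_2}\dotsm V_{j_{n-1}j_n}.
\]
The same observation applied to $P-\mathbf 1\in\mathbb V(\Phi)$ makes each $\log(P)_{ik}$ a finite sum in the off-diagonal blocks of $P$. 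Hence the two formulas \eqref{vctsaq45678} and \eqref{vctre5y67} already give well-defined maps into $\mathbb M(\Phi)$ and $\mathbb V(\Phi)$, respectively, without any real convergence issue.

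I would then verify that $\exp$ is the exponential map of $\mathbb M(\Phi)$ in the sense of Subsection~\ref{app:ExpMap}. For fixed $V\in\mathbb V(\Phi)$ set $P(t):=\exp(tV)$; the block formula shows that each $(P(t))_{ik}$ is a polynomial in $t$, so $t\mapsto P(t)$ is a smooth curve into $\mathbb M(\Phi)$. The one-parameter subgroup property $P(s)P(t)=P(s+t)$ and the equality $P'(0)=V$ both follow from the standard Cauchy-product identity and term-by-term differentiation, applied on each block, where everything is a finite sum. Hence $\exp(V)=P(1)$ is indeed the time-one value of the one-parameter subgroup with tangent $V$. The identities $\log\circ\exp=\mathrm{id}$ and $\exp\circ\log=\mathrm{id}$ then reduce blockwise to the formal power-series identities between $\exp$ and $\log$, which are valid verbatim because on each block only finitely many terms contribute, so one is merely asserting the usual inverse relationship for nilpotent elements of a unital ring.

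For analyticity, Lemma~\ref{tfy7e6e} reduces the question to the analyticity of each block map $V\mapsto\exp(V)_{ik}$; by the explicit expression above, this block is a polynomial in the entries $V_{jl}$ with $i\le j<l\le k$ built from operator composition, hence analytic by Lemma~\ref{gctessswe5}(i) (see also Remark~\ref{gcyd6ue6}). The argument for $\log$ is identical, using that every block of $\log(P)$ is likewise a finite polynomial in the off-diagonal blocks of $P-\mathbf 1$. Since $\exp$ and $\log$ are mutually inverse analytic maps, Lemma~\ref{analytic_reparametrization} in the Appendix yields that $\exp$ provides an equivalent global parametrization of $\mathbb M(\Phi)$. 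Finally, both parametrizations $V\mapsto\mathbf 1+V$ and $V\mapsto\exp(V)$ agree to first order at $V=0$ (each has derivative the identity of $\mathbb V(\Phi)$), so, as recalled in the Appendix, the induced Lie bracket on $\mathbb V(\Phi)$ is unchanged and remains the commutator \eqref{xra}. The only step where I expect to slow down is the bookkeeping behind the blockwise finiteness in paragraph one; once that is in place the rest runs in close parallel to Propositions~\ref{cxtstew5} and~\ref{gfxsetser5}.
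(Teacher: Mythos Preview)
Your proposal is correct and follows essentially the same route as the paper's proof: both exploit the local nilpotency \eqref{cxzraw4t} (yielding \eqref{vfyte6i}) to make the series for $\exp$ and $\log$ finite on each block, both verify the one-parameter subgroup property and tangent condition directly, and both obtain analyticity via Lemma~\ref{tfy7e6e} together with Lemma~\ref{gctessswe5}(i) and Remark~\ref{gcyd6ue6}, finishing with Lemma~\ref{analytic_reparametrization}. Your version is slightly more explicit in writing out the block formula for $\exp(V)_{ik}$, but the argument is the same.
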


\begin{proof} 
  Formula \eqref{cxzraw4t} implies that, for each $V\in\mathbb V(\Phi)$ and $n\in\mathbb N_0$, we have
\begin{equation}\label{vfyte6i}
V^{n+1}\big(\mathcal P^{(n)}(\Phi')\big)=\{0\}, \quad n\in\mathbb N_0.
\end{equation}
Hence, for each $t\in\mathbb R$ and $V\in\mathbb V(\Phi)$, the linear operator $\sum_{n=1}^\infty\frac{t^n}{n!}\, V^n$ is well defined and is an element of $\mathbb V(\Phi)$. In view of \eqref{vfyte6i}, we easily see that the map
$\R\ni t\mapsto\exp(tV):=\mathbf 1+ \sum_{n=1}^\infty\frac{t^n}{n!}\, V^n\in\mathbb M(\Phi)$
is smooth. Thus, $(\exp(tV))_{t\in\R}$ is a one-parameter subgroup of $\mathbb M(\Phi)$.
Since $\frac d{dt}\big|_{t=0}\exp(tV)=V$, the exponential map is defined for all $V\in\mathbb V(\Phi)$ and is equal to $\exp(V)$.

Next, we note that, for $V\in\mathbb V(\Phi)$, we have $\log(\mathbf 1+V)=\sum_{n=1}^\infty\frac{(-1)^{n+1}}{n}\, V^n$. Hence, analogously to the above, we have $\log(\mathbf 1+V)\in\mathbb V(\Phi)$. Similarly to the proof of Proposition~\ref{cxtstew5}, we now conclude that that exponential map is bijective and the logarithm map is its inverse.

  By Lemmas~\ref{tfy7e6e}, \ref{gctessswe5} (i), Remark~\ref{gcyd6ue6}, formula~\eqref{rtse5wu656}, and Remark~\ref{gcdtsw5qy}, the maps in formulas \eqref{vctsaq45678} and \eqref{vctre5y67} are analytic. 
 The fact that the exponential map $\exp$ provides an equivalent global parametrization of $\mathbb M(\Phi)$ follows from Lemma~\ref{analytic_reparametrization}. 
\end{proof}

 %Corollaries~\ref{vctrset5ws} and \ref{fsesa4acdrd}, we obtain

%\begin{corollary}\label{ftde6ed6e6e6} When $\mathbb M(\Phi)$ is globally parametrized through the map in~\eqref{vctre5y67}, the Lie bracket on $\mathbb V(\Phi)$ is still given by \eqref{xra}.
%\end{corollary}

\begin{proposition}\label{bufdsa}
$\mathbb M(\Phi)$ is a regular Lie group. \end{proposition}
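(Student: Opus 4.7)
The plan is to mirror the structure of the previous regularity proofs (Propositions~\ref{vytde6e6}, \ref{vcte6u}, \ref{acufytFTCD}): first compute the right logarithmic derivative at an arbitrary point under the global parametrization $\mathbf 1+V$ of $\mathbb M(\Phi)$, then solve the resulting block-matrix Cauchy problem componentwise by iterated Riemann integration via Lemma~\ref{dtstesrew5}(i), and finally verify analyticity of the endpoint map using Lemmas~\ref{tfy7e6e}, \ref{gctessswe5}(i) (with Remark~\ref{gcyd6ue6}) and~\ref{ctrsw6u4}. The key observation that makes everything work is that, because elements of $\mathbb V(\Phi)$ are strictly block-upper-triangular (i.e.\ satisfy \eqref{cxzraw4t}), the Cauchy problem decouples into a sequence of equations that can be solved by induction on the superdiagonal index $k-i$.

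Concretely, for $P\in\mathbb M(\Phi)$ and $W\in\mathbb V(\Phi)$, formula \eqref{rtse5wu656} gives
\begin{equation*}
\frac{d}{ds}\Big|_{s=0}\big(P(\mathbf 1+sW)-\mathbf 1\big)=PW\in\mathbb V(\Phi).
\end{equation*}
Given a smooth curve $[0,1]\ni t\mapsto W(t)=[W_{ik}(t)]_{i,k\in\mathbb N_0}\in\mathbb V(\Phi)$, we therefore seek a smooth curve $[0,1]\ni t\mapsto P(t)=[P_{ik}(t)]_{i,k\in\mathbb N_0}\in\mathbb M(\Phi)$ with $P(0)=\mathbf 1$ and $P'(t)=P(t)W(t)$. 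Since $P_{kk}(t)=\mathbf 1_k$ and $W_{kk}(t)=0$, this reduces, for $i<k$, to
\begin{equation*}
P'_{ik}(t)=\sum_{j=i}^{k-1}P_{ij}(t)W_{jk}(t),\qquad P_{ik}(0)=0.
\end{equation*}
By Lemma~\ref{dtstesrew5}(i), the unique solution is given by the recurrence formula
\begin{equation*}
P_{i,i+1}(t)=\int_0^t W_{i,i+1}(r)\,dr,\qquad P_{ik}(t)=\int_0^t\sum_{j=i}^{k-1}P_{ij}(r)W_{jk}(r)\,dr\quad(k\ge i+2),
\end{equation*}
where each $P_{ik}\in C^\infty([0,1];\mathcal L(\Phi^{\hat\odot k},\Phi^{\hat\odot i}))$ by induction on $k-i$. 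Consequently $[0,1]\ni t\mapsto P(t)\in\mathbb M(\Phi)$ is smooth.

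It remains to prove that the map $C^\infty([0,1];\mathbb V(\Phi))\ni W(\cdot)\mapsto P(1)\in\mathbb M(\Phi)$ is analytic. As in the proofs of Propositions~\ref{vytde6e6} and~\ref{vcte6u}, the recurrence above still produces a well-defined element $P(1)\in\mathbb M(\Phi)$ when $W(\cdot)$ is only required to be a continuous curve, and $C^\infty([0,1];\mathbb V(\Phi))$ is continuously embedded into $C([0,1];\mathbb V(\Phi))$; hence it suffices to prove the analyticity of
\begin{equation*}
C([0,1];\mathbb V(\Phi))\ni W(\cdot)\mapsto P(1)\in\mathbb M(\Phi).
\end{equation*}
By induction on $k-i$, each coefficient $P_{ik}(1)$ is obtained from the previously constructed analytic expressions by pointwise operator composition (Lemma~\ref{gctessswe5}(i) plus Remark~\ref{gcyd6ue6}) and Riemann integration over $[0,1]$ (Lemma~\ref{ctrsw6u4}), which are all analytic operations, and Lemma~\ref{tfy7e6e} then assembles these component maps into analyticity of the full map into $\mathbb M(\Phi)$. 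The main (routine) obstacle is bookkeeping the analyticity induction across the superdiagonals, but no new analytic difficulty beyond what was already overcome in the earlier regularity proofs arises, so the argument goes through.
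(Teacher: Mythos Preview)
Your proof is correct and takes essentially the same route as the paper: solve $P'(t)=P(t)V(t)$, $P(0)=\mathbf 1$, by iterated Riemann integration, relying on the strict upper-triangularity \eqref{cxzraw4t}, and then deduce analyticity of the endpoint map from Lemmas~\ref{tfy7e6e}, \ref{gctessswe5} and~\ref{ctrsw6u4}. The only difference is presentational: the paper writes the solution compactly as the time-ordered exponential (Dyson series) \eqref{7t8r88}, whereas you unfold the same object componentwise via the recurrence on $k-i$, exactly mirroring the proofs of Propositions~\ref{vytde6e6} and~\ref{vcte6u}. One small correction: in the analyticity step you should invoke Lemma~\ref{gctessswe5}(ii) rather than (i), since the inductive argument requires analyticity of the pointwise product of \emph{curves} $(P_{ij}(\cdot),W_{jk}(\cdot))\mapsto P_{ij}(\cdot)W_{jk}(\cdot)$ as a map between $C([0,1];\mathcal L(\ldots))$ spaces.
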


\begin{proof} Let $[0, 1]\ni t\mapsto V(t)\in \mathbb V(\Phi)$ be a smooth curve. Similarly to the proof of Proposition~\ref{vytde6e6}, we first need to find a smooth curve $[0, 1]\ni t\mapsto P(t)\in \mathbb M(\Phi)$ that satisfies
\begin{equation}\label{csrwqy45q5}
P'(t)=P(t)V(t), \quad t\in[0, 1], 
\end{equation}
where $P(t)V(t)$ is understood as the product of operators from $\mathbb U(\Phi)$.
We state that
\begin{equation}\label{7t8r88}
P(t)=\mathbf 1+\sum_{l=1}^\infty\int_0^{t}dr_1\bigg(\int_0^{r_1}dr_2\bigg(\dotsm\bigg(\int_0^{r_{l-1}}dr_l\, V(r_{l})\bigg)\dotsm \bigg)V(r_2)\bigg) V(r_1).\end{equation}
Indeed, at least formally, \eqref{7t8r88} implies
\begin{equation}\label{vctrsy55}
P'(t)=V(t)+\sum_{l=2}^\infty \bigg(\int_0^{t}dr_2\bigg(\dotsm\bigg(\int_0^{r_{l-1}}dr_l\, V(r_{l})\bigg)\dotsm \bigg)V(r_2)\bigg) V(t), \end{equation}
which in turn implies \eqref{csrwqy45q5}. In fact, formula \eqref{vfyte6i} and Lemma~\ref{gctessswe5} (see also Remark~\ref{gcyd6ue6}) justify 
formulas \eqref{7t8r88}, \eqref{vctrsy55}.

That the map $C^\infty([0, 1];\mathbb V(\Phi))\ni V(\cdot)\mapsto P(1)\in\mathbb M(\Phi) $ is analytic can be shown similarly to the proof of Propositions~\ref{vytde6e6} and \ref{vcte6u}.
\end{proof}

\section{The Sheffer Lie group}\label{bvfyr64u}

\subsection{The Sheffer group}

Let $P=[P_{ik}]_{i, k\in\mathbb N_0}\in\mathbb M(\Phi)$ and let $(P^{(k)}(\omega, f^{(k)}))_{f^{(k)}\in\Phi^{\hat\odot k}, \, k\in\mathbb N_0}$ be the corresponding monic polynomial sequence. By Remark~\ref{cfta4a4zz}, for a fixed $\omega\in\Phi'$ and each $k\in\mathbb N_0$, 
we have $P^{(k)}(\omega, \cdot)=\sum_{i=0}^k\langle\omega^{\otimes i}, P_{ik}\, \cdot\rangle\in(\Phi^{\hat\odot k})'$. For each fixed $\omega\in\Phi'$, we define
\begin{equation}\label{vcrses}
G(\omega, \xi):=\sum_{k=0}^\infty \frac1{k!}\, P^{(k)}(\omega, \xi^{\otimes k})\in \mathcal F_0(\Phi).
\end{equation}
 The $G(\omega, \xi)$ is called the {\it (exponential) generating function of the monic polynomial sequence corresponding to the operator~$P$}. In view of Remark~\ref{ghcydydsszza}, the generating function $G(\omega, \xi)$ uniquely characterizes $P$. Hence, we may also think of $G(\omega, \xi)$ as the {\it generating function of the operator~$P$}.

An operator $P\in\mathbb M(\Phi)$ is called a {\it Sheffer operator} and the corresponding monic polynomial sequence is called a {\it Sheffer sequence} if their generating function is of the form
\begin{equation}\label{fgxtrsr}
G(\omega, \xi)=\exp\big[\langle \omega, B(\xi)\rangle\big]A(\xi), \end{equation}
where $(A(\xi), B(\xi))\in\mathcal S(\Phi)$. We denote by $\mathbb S(\Phi)$ the set of all Sheffer operators.
Note that, for $B(\xi)=\sum_{k=1}^\infty B_k\xi^{\otimes k}\in \mathcal F_1(\Phi)$ and $\omega\in\Phi'$, we have $\langle\omega, B(\xi)\rangle=\sum_{k=1}^\infty\langle\omega, B_k\xi^{\otimes k}\rangle\in\mathcal W_0(\Phi)$. Hence, for each $\omega\in\Phi'$, we indeed have $\exp\big[\langle \omega, B(\xi)\rangle\big]\in\mathcal F_0(\Phi)$, and so $G(\omega, \xi)\in\mathcal F_0(\Phi)$.

An operator $P\in\mathbb M(\Phi)$ is called an {\it Appell operator} and the corresponding monic polynomial sequence is called an {\it Appell sequence} if their generating function is of the form
$ G(\omega, \xi)=\exp\big[\langle \omega, \xi\rangle\big]A(\xi)$, where $A(\xi)\in\mathcal F_0(\Phi)$. Thus, $P$ is an Appell operator if it is a Sheffer operator for which $B(\xi)=\xi$. We denote by $\mathbb A(\Phi)$ the set of all Appell operators. 

An operator $P\in\mathbb M(\Phi)$ is called an {\it umbral operator} and the corresponding monic polynomial sequence is called a {\it sequence of binomial type}, or just a {\it binomial sequence} if their generating function is of the form
$ G(\omega, \xi)=\exp\big[\langle \omega, B(\xi)\rangle\big]$, where $B(\xi)\in\mathcal F_1(\Phi)$. Thus, $P$ is an umbral operator if it is a Sheffer operator for which $A(\xi)=1$. We denote by $\mathbb B(\Phi)$ the set of all umbral operators. 

\begin{remark}
Similarly to \cite[Theorem~4.1]{FKLO}, one can show that, for $P\in\mathbb M(\Phi)$, the corresponding monic polynomial sequence is of binomial type if and only if, for each $n\in\mathbb N$, $\omega, \omega'\in\Phi'$ and $\xi\in\Phi$, 
$$P^{(n)}(\omega+\omega', \xi^{\otimes n})=\sum_{k=0}^n\binom nk P^{(k)}(\omega, \xi^{\otimes k}) P^{(n-k)}(\omega', \xi^{\otimes (n-k)}).$$
\end{remark}

\begin{proposition}\label{bhude7i8}
Let $A(\xi)=\sum_{k=0}^\infty A_k\xi^{\otimes k}\in\mathcal F_0(\Phi)$ and $B(\xi)=\sum_{k=1}^\infty B_k\xi^{\otimes k}\in\mathcal F_1(\Phi)$, and let $P=[P_{ik}]_{i, k\in\mathbb N_0}$ have the generating function $G(\omega, \xi)$ given by~\eqref{fgxtrsr}. 

(i) We have 
\begin{equation}\label{vgd6u4i}
P_{0n}=n!\, A_n, \quad n\in\mathbb N, \end{equation}
 and for $1\le k<n$, 
\begin{equation}\label{vcftstezz}
P_{kn}=\frac{n!}{k!}\bigg(\sum_{{l_1, \dots, l_k \in\mathbb N}\atop l_1+\dots+l_k=n}(B_{l_1}\odot \cdots \odot B_{l_k})+\sum_{m=1}^{n-k}\sum_{{l_1, \dots, l_k \in\mathbb N}\atop l_1+\dots+l_k=n-m}(A_m\odot B_{l_1}\odot \cdots \odot B_{l_k})\bigg). 
\end{equation}

(ii) We have 
\begin{equation}
B_n=\frac1{n!}\, P_{1n}-\sum_{m=1}^{n-1}\frac1{(n-m)!}\, P_{0\, n-m}\odot B_m\quad\text{for } n\ge2.\label{ctre6uu}
\end{equation}
In particular, a Sheffer operator $P\in\mathbb S(\Phi)$ uniquely identifies $A(\xi)\in\mathcal F_0(\Phi)$ and $B(\xi)\in\mathcal F_1(\Phi)$ through the generating function of $P$.
\end{proposition}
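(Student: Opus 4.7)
The plan is to compute both sides of the defining identity
\[
\sum_{k=0}^\infty \frac{1}{k!}\,P^{(k)}(\omega,\xi^{\otimes k}) \;=\; \exp\bigl[\langle\omega,B(\xi)\rangle\bigr]\,A(\xi)
\]
as formal tensor power series in $\xi\in\Phi$ whose coefficients depend polynomially on $\omega\in\Phi'$, and then to match coefficients of $\omega^{\otimes i}\xi^{\otimes n}$. By Remark~\ref{ghcydydsszza} and Lemma~\ref{fsrea45y}, this matching uniquely characterises the block entries $P_{ik}$.

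First I would expand the right-hand side. Using $\langle\omega,B(\xi)\rangle^k = \langle\omega^{\otimes k},B(\xi)^{\odot k}\rangle$, I have
\[
\exp\bigl[\langle\omega,B(\xi)\rangle\bigr]\,A(\xi) = A(\xi) + \sum_{k=1}^\infty \frac{1}{k!}\,\bigl\langle\omega^{\otimes k},\,B(\xi)^{\odot k}\bigr\rangle\,A(\xi).
\]
From \eqref{cxdtsrea5}--\eqref{vcftse5a5ywu} applied iteratively,
\[
B(\xi)^{\odot k} = \sum_{n\ge k}\Biggl(\sum_{\substack{l_1,\dots,l_k\in\mathbb N\\ l_1+\dots+l_k=n}} B_{l_1}\odot\cdots\odot B_{l_k}\Biggr)\xi^{\otimes n},
\]
and multiplying by $A(\xi)=\sum_{m\ge 0}A_m\xi^{\otimes m}$ and collecting coefficients of $\xi^{\otimes n}$, the right-hand side takes the form $\sum_{n\ge 0}\langle\omega^{\otimes 0}+\dots+\omega^{\otimes n},\,\cdot\,\rangle\,\xi^{\otimes n}/n!$ (with $P_{nn}=\mathbf 1_n$ arising from the pure $k=n$, $l_1=\cdots=l_n=1$ contribution, confirming consistency with $P\in\mathbb M(\Phi)$). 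The left-hand side, by definition of $P^{(k)}$ in \eqref{dr6e6}, has $\xi^{\otimes n}$-coefficient $\tfrac{1}{n!}\sum_{i=0}^n\langle\omega^{\otimes i},P_{in}\xi^{\otimes n}\rangle$. Matching the $\omega^{\otimes 0}$-part yields $P_{0n}=n!A_n$, which is \eqref{vgd6u4i}. Matching the $\omega^{\otimes k}$-part for $1\le k<n$ gives
\[
\tfrac{1}{n!}P_{kn} = \tfrac{1}{k!}\!\sum_{l_1+\dots+l_k=n}\!B_{l_1}\odot\cdots\odot B_{l_k} + \tfrac{1}{k!}\sum_{m=1}^{n-k}A_m\odot\!\sum_{l_1+\dots+l_k=n-m}\!B_{l_1}\odot\cdots\odot B_{l_k},
\]
which is \eqref{vcftstezz}. (At this step one needs to invoke Lemma~\ref{fsrea45y} together with the fact that $\{\omega^{\otimes i}\mid\omega\in\Phi'\}$ separates $\Phi^{\hat\odot i}$, so that matching the $\langle\omega^{\otimes i},\cdot\rangle$-parts indeed determines the operators; this is the only point requiring real care.)

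For part (ii), I specialize \eqref{vcftstezz} at $k=1$, which gives
\[
P_{1n} = n!\Bigl(B_n + \sum_{m=1}^{n-1}A_m\odot B_{n-m}\Bigr),
\]
and solve for $B_n$ using $A_m=\tfrac{1}{m!}P_{0m}$ from \eqref{vgd6u4i}, changing summation index $m\mapsto n-m$. This yields \eqref{ctre6uu}. Finally, the uniqueness of the pair $(A(\xi),B(\xi))$ for a given $P\in\mathbb S(\Phi)$ follows by induction: $A_0=1$ and $B_1=\mathbf 1$ are fixed by definition of $\mathcal F_0(\Phi),\mathcal F_1(\Phi)$; \eqref{vgd6u4i} then determines every $A_n$ from $P$; and the recurrence \eqref{ctre6uu} determines every $B_n$ ($n\ge 2$) from $P_{1n}$, $P_{0,n-m}$, and the previously determined $B_1,\dots,B_{n-1}$.

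The calculation is essentially a bookkeeping exercise; the only substantive obstacle is ensuring that the formal manipulations of $\exp\bigl[\langle\omega,B(\xi)\rangle\bigr]\,A(\xi)$ as an element of $\mathcal F(\Phi;\mathbb C)$ (with coefficients in $\omega$) are rigorously justified. This is handled by noting that, at each order $\xi^{\otimes n}$, only finitely many terms appear (since $B(\xi)$ has no constant term), so every expansion is a finite sum in the appropriate space $(\Phi^{\hat\odot n})'$, and the coefficient comparison reduces to equalities in these Banach-space-valued components.
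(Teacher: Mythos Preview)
Your proposal is correct and follows essentially the same approach as the paper: expand both sides of the generating-function identity, match the $\omega^{\otimes i}$-components (the paper does this via the substitution $\omega\mapsto s\omega$ in its Proposition~\ref{vhydye}, which is equivalent to your direct use of Lemma~\ref{fsrea45y} and the bijectivity of $I$), and then specialize \eqref{vcftstezz} at $k=1$ together with \eqref{vgd6u4i} to obtain the recurrence \eqref{ctre6uu}. The paper's own proof is a two-line sketch (``straightforward calculations''), and you have simply written out the details it omits.
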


\begin{proof}
Part (i) follows by straightforward calculations from the definition of the generating function of a Sheffer operator. Formula~\eqref{ctre6uu} follows from part (i). Formula \eqref{ctre6uu} allows us to recurrently calculate $B_{n}$ ($n\ge2$) through $P_{01}, P_{02}, \dots, P_{0\, n-1}$ and $P_{12}, P_{13}, \dots, P_{1n}$. 
\end{proof}

\begin{corollary}\label{vgy7kus}
(i) There exist continuous (in fact analytic) maps 
$$g_{kn}: \prod_{j=1}^{n-k}(\Phi^{\hat\odot j})'\times\prod_{l=2}^{n-k+1}\mathcal L(\Phi^{\hat\odot l}, \Phi)\to\mathcal L(\Phi^{\hat\odot n}, \Phi^{\hat\odot k}), \quad 2\le k<n, $$
for which the following statement holds: $P=[P_{kn}]_{k, n\in\mathbb N_0}\in\mathbb M(\Phi)$ is a Sheffer operator 
 if and only if 
\begin{equation}\label{dasrrh}
P_{kn}=g_{kn}(P_{01}, \dots, P_{0\, n-k}, P_{12}, \dots, P_{1\, n-k+1}), \quad 2\le k<n.\end{equation}

(ii) $P=[P_{kn}]_{k, n\in\mathbb N_0}\in\mathbb M(\Phi)$ is an Appell operator if and only if $P_{1n}=n!\, P_{0\, n-1}\odot\mathbf 1_1$ for $n\ge2$ and condition \eqref{dasrrh} is satisfied.

(iii) $P=[P_{kn}]_{k, n\in\mathbb N_0}\in\mathbb M(\Phi)$ is an umbral operator if and only if $P_{0n}= 0$ for $n\ge1$ and condition \eqref{dasrrh} is satisfied.
\end{corollary}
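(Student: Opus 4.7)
The plan is to read off everything from Proposition~\ref{bhude7i8} and the analyticity of the symmetric tensor products. Fix $P=[P_{kn}]_{k,n\in\mathbb N_0}\in\mathbb M(\Phi)$.

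\textbf{Step 1: Construction of $g_{kn}$ and the forward direction of (i).} Suppose $P\in\mathbb S(\Phi)$, determined by some $(A(\xi),B(\xi))\in\mathcal S(\Phi)$. Formula~\eqref{vgd6u4i} gives $A_n=P_{0n}/n!$ for $n\ge1$. Using the recurrence \eqref{ctre6uu}, I would argue by induction on $n\ge 2$ that $B_n$ is an analytic function of $P_{01},\dots,P_{0\,n-1}$ and $P_{12},\dots,P_{1n}$, since each application of \eqref{ctre6uu} involves only symmetric tensor products and finite sums, which are analytic by Lemma~\ref{cgtsrta}(i) and Remark~\ref{cxfet678i9}. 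Plugging these expressions for the $A_m$'s (with $m\le n-k$) and $B_l$'s (with $l\le n-k+1$) into \eqref{vcftstezz} produces $P_{kn}$ for $2\le k<n$ as an analytic function of $P_{01},\dots,P_{0\,n-k},P_{12},\dots,P_{1\,n-k+1}$. Defining $g_{kn}$ to be this analytic expression yields \eqref{dasrrh}.

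\textbf{Step 2: Converse direction of (i).} Suppose $P\in\mathbb M(\Phi)$ satisfies~\eqref{dasrrh}. Define $A_0:=1$, $A_n:=P_{0n}/n!$ for $n\ge1$, $B_1:=\mathbf 1$, and define $B_n$ inductively by \eqref{ctre6uu} for $n\ge2$; this gives $(A(\xi),B(\xi))\in\mathcal F_0(\Phi)\times\mathcal F_1(\Phi)=\mathcal S(\Phi)$. Let $\tilde P=[\tilde P_{kn}]_{k,n\in\mathbb N_0}\in\mathbb S(\Phi)$ be the Sheffer operator with generating function \eqref{fgxtrsr} built from this pair. By construction and Proposition~\ref{bhude7i8}(i), $\tilde P_{0n}=P_{0n}$; inverting \eqref{ctre6uu} (which is exactly how the $B_n$ were chosen) gives $\tilde P_{1n}=P_{1n}$ for $n\ge 1$ (and $\tilde P_{00}=P_{00}=1$, $\tilde P_{11}=P_{11}=\mathbf 1$ trivially). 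For $2\le k<n$, the entries $\tilde P_{kn}$ are exactly $g_{kn}(P_{01},\dots,P_{0\,n-k},P_{12},\dots,P_{1\,n-k+1})$ by the definition of $g_{kn}$ in Step~1, and by hypothesis \eqref{dasrrh} this equals $P_{kn}$. Hence $P=\tilde P\in\mathbb S(\Phi)$.

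\textbf{Step 3: Parts (ii) and (iii).} For (iii), $P$ is umbral iff $A(\xi)=1$, i.e.\ $A_n=0$ for all $n\ge1$, which by \eqref{vgd6u4i} is equivalent to $P_{0n}=0$ for $n\ge1$. Combined with the Sheffer characterization from (i), this gives the claim. For (ii), $P$ is Appell iff $B(\xi)=\xi$, i.e.\ $B_n=0$ for $n\ge2$. Setting $B_n=0$ in \eqref{ctre6uu} and solving for $P_{1n}$ gives a single relation between $P_{1n}$ and $P_{0\,n-1}$ (after collapsing the sum, since only the $m=n-1$ term involving $B_1=\mathbf 1$ survives in the reduction from \eqref{vcftstezz} for $k=1$). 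This yields the stated condition $P_{1n}=n!\,P_{0\,n-1}\odot\mathbf 1_1$ for $n\ge2$; the converse is similar, using induction on $n$ to show that this recurrence forces $B_n=0$.

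The only nontrivial point is the bookkeeping in Step~1: one must verify by induction on $n$ that the $B_n$ obtained from \eqref{ctre6uu} depend only on $P_{0j}$ with $j\le n-1$ and $P_{1j}$ with $j\le n$, and then carefully track the indices in \eqref{vcftstezz} to confirm that $P_{kn}$ depends only on $A_1,\dots,A_{n-k}$ and $B_2,\dots,B_{n-k+1}$ (since $l_1+\cdots+l_k=n$ with $l_i\ge 1$ forces $\max l_i\le n-k+1$, and $l_1+\cdots+l_k=n-m$ with $m\ge 1$ forces $\max l_i\le n-k$, while $m\le n-k$ in the second sum). Analyticity is then automatic because each step composes analytic maps (sum, symmetric tensor product, scalar multiplication) between complete locally convex spaces.
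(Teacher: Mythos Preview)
Your proposal is correct and follows essentially the same approach as the paper: both proofs derive the maps $g_{kn}$ by expressing $A_n$ via \eqref{vgd6u4i}, recovering $B_n$ recursively from \eqref{ctre6uu}, and then substituting into \eqref{vcftstezz}, with analyticity supplied by Lemma~\ref{cgtsrta}(i); parts (ii) and (iii) are in both cases obtained by translating the conditions $B_n=0$ (resp.\ $A_n=0$) through \eqref{vgd6u4i} and \eqref{ctre6uu}. The only difference is organizational: you split the argument explicitly into a forward and a converse direction and add an index-bookkeeping paragraph, whereas the paper phrases it as a single existence/uniqueness statement for the Sheffer operator determined by prescribed $P_{0j}$ and $P_{1l}$.
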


\begin{proof} (i) Fix arbitrary $P_{0j}\in (\Phi^{\hat\odot j})'$ ($j\ge1$) and $P_{1l}\in \mathcal L(\Phi^{\hat\odot l}, \Phi)$ ($l\ge2$). We state that there exists a unique $P\in\mathbb S(\Phi)$ for which $P_{0j}$ ($j\ge1$) and $P_{1l}$ ($l\ge2$) are the entries of the block-matrix $P=[P_{kn}]_{k, n\in\mathbb N_0}$. Indeed, we define $A_n$ ($n\ge1$) using formula \eqref{vgd6u4i} and $B_n$ ($n\ge2$) recurrently through formula \eqref{ctre6uu}. We then define $P_{kn}$ for $2\le k<n$ by formula \eqref{vcftstezz}, which yields the required $P=[P_{kn}]_{k, n\in\mathbb N_0}\in \mathbb S(\Phi)$. 

A more careful analysis of the recurrence formula \eqref{ctre6uu} shows that $B_n$ is an analytic function of $P_{01}, P_{02}, \dots, P_{0\, n-1}, P_{12}, P_{13}, \dots, P_{1n}$. Trivially, by formula~\eqref{vgd6u4i}, we see that $A_n$ ($n\ge1$) is an analytic function of $P_{0n}$. 
Substituting these functions for $A_n$ and $B_n$ into~\eqref{vcftstezz}, we conclude that $P_{kn}$ ($2\le k<n$) is an analytic function of $P_{01}, \dots, P_{0\, n-k}, P_{12}, \dots, P_{1\, n-k+1}$.

(ii) and (iii). We only need to describe the respective conditions $B_n=0$ for $n\ge2$ and $A_n=0$ for $n\ge1$ in terms of $P_{0j}$ ($j\ge1$) and $P_{1j}$ ($j\ge2$). But this easily follows from formulas \eqref{vgd6u4i} and \eqref{ctre6uu}.
\end{proof}

\begin{proposition} \label{vhydye} Let $P=[P_{ik}]_{i, k\in\mathbb N_0}\in\mathbb M(\Phi)$. Then $P$ has generating function~\eqref{fgxtrsr} if and only if, for each $i\in\mathbb N_0$, we have
\begin{equation}\label{cydtyds}
i!\sum_{k=i}^\infty\frac{1}{k!} P_{ik}\xi^{\otimes k}= B(\xi)^{\odot i}A(\xi), \end{equation}
formula \eqref{cydtyds} being understood as an equality of formal tensor power series from $\mathcal F(\Phi;\Phi^{\hat\odot i})$.
\end{proposition}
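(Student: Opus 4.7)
The plan is to directly expand both sides of the generating function identity $G(\omega, \xi)=\exp[\langle\omega, B(\xi)\rangle]A(\xi)$ as formal tensor power series in $\xi$ whose coefficients are paired with $\omega^{\otimes i}$, and then invoke the separation property given by Remark~\ref{cfta4a4zz} and Lemma~\ref{fsrea45y} to peel off those pairings and obtain \eqref{cydtyds} coefficient by coefficient in $i$.

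First I would rewrite the left-hand side. By \eqref{vcrses} and \eqref{dr6e6}, for every $\omega\in\Phi'$,
\[
G(\omega,\xi)=\sum_{k=0}^{\infty}\frac{1}{k!}\sum_{i=0}^{k}\bigl\langle\omega^{\otimes i},P_{ik}\xi^{\otimes k}\bigr\rangle=\sum_{i=0}^{\infty}\Bigl\langle\omega^{\otimes i},\,\sum_{k=i}^{\infty}\frac{1}{k!}P_{ik}\xi^{\otimes k}\Bigr\rangle,
\]
where the interchange of summation is purely formal, legitimate at the level of formal tensor power series because for each fixed power of $\xi$ only finitely many terms contribute. Note that $\sum_{k\ge i}(k!)^{-1}P_{ik}\xi^{\otimes k}\in\mathcal F(\Phi;\Phi^{\hat\odot i})$, which is exactly the object appearing on the left-hand side of \eqref{cydtyds}.

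Next I would expand the right-hand side. Since $B(\xi)\in\mathcal F_1(\Phi)$, for each $\omega\in\Phi'$ the series $\langle\omega,B(\xi)\rangle=\sum_{k\ge 1}\langle\omega,B_k\xi^{\otimes k}\rangle$ is an element of $\mathcal W_0(\Phi)$ with no constant term, so $\exp[\langle\omega,B(\xi)\rangle]=\sum_{i\ge 0}(i!)^{-1}\langle\omega,B(\xi)\rangle^i$ is a well-defined element of $\mathcal F_0(\Phi)$. The key algebraic identity I need is
\[
\bigl\langle\omega,B(\xi)\bigr\rangle^{i}=\bigl\langle\omega^{\otimes i},\,B(\xi)^{\odot i}\bigr\rangle,
\]
viewed as an equality in $\mathcal F(\Phi;\mathbb C)$. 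This follows by expanding $B(\xi)^{\odot i}$ using \eqref{cxdtsrea5}--\eqref{vcftse5a5ywu}, computing $(B_{k_1}\odot\cdots\odot B_{k_i})\xi^{\otimes(k_1+\cdots+k_i)}=(B_{k_1}\xi^{\otimes k_1})\odot\cdots\odot(B_{k_i}\xi^{\otimes k_i})$ (which drops out of the definition of $\operatorname{Sym}$ applied to $\xi^{\otimes k}$), and using the fact that $\omega^{\otimes i}$ is a symmetric functional, so pairing it with any symmetric tensor product of $\Phi$-valued elements yields the product of the individual pairings. Multiplying by the scalar series $A(\xi)$ then gives
\[
\exp[\langle\omega,B(\xi)\rangle]A(\xi)=\sum_{i=0}^{\infty}\Bigl\langle\omega^{\otimes i},\,\tfrac{1}{i!}B(\xi)^{\odot i}A(\xi)\Bigr\rangle.
\]

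Finally, the equivalence of \eqref{fgxtrsr} and \eqref{cydtyds} follows by equating coefficients of $\omega^{\otimes i}$. More precisely, $G(\omega,\xi)=\exp[\langle\omega,B(\xi)\rangle]A(\xi)$ holds for every $\omega\in\Phi'$ if and only if, for every $\omega\in\Phi'$ and every $i\in\mathbb N_0$, the coefficient identity
\[
\Bigl\langle\omega^{\otimes i},\,\sum_{k=i}^{\infty}\tfrac{1}{k!}P_{ik}\xi^{\otimes k}-\tfrac{1}{i!}B(\xi)^{\odot i}A(\xi)\Bigr\rangle=0
\]
holds coefficientwise in $\xi$; by Lemma~\ref{fsrea45y} applied in each fixed degree $k$ of $\xi$, this forces \eqref{cydtyds}. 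The main obstacle in executing the plan is the careful handling of the identity $\langle\omega,B(\xi)\rangle^i=\langle\omega^{\otimes i},B(\xi)^{\odot i}\rangle$, and in particular tracking how the symmetric tensor product $\odot$ on operators with $\Phi$-valued range distributes over $\xi^{\otimes k}$ to produce symmetric tensor products of $\Phi$-valued coefficients; once that combinatorial identity is set up cleanly at the level of formal tensor power series, the rest of the argument is routine bookkeeping.
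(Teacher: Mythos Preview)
Your proof is correct and follows essentially the same strategy as the paper: expand both sides of \eqref{fgxtrsr}, group by the power of $\omega$, and invoke Lemma~\ref{fsrea45y} to strip off the pairings. The only notable difference is that the paper makes your ``equate coefficients of $\omega^{\otimes i}$'' step explicit by substituting $s\omega$ for $\omega$ (Remark~\ref{bydtrsw5w}) and reading off the coefficient of $s^i$, which cleanly separates the contributions of different homogeneous degrees in $\omega$ before applying Remark~\ref{cfta4a4zz}; your version leans on the injectivity of the map $I:\mathcal F_{\mathrm{fin}}(\Phi)\to\mathcal P(\Phi')$ (which the paper derives from Lemma~\ref{fsrea45y}) to achieve the same separation, and this is perfectly fine but worth stating explicitly rather than jumping straight from the summed identity to the $i$-by-$i$ identity.
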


\begin{proof} Let $P=[P_{ik}]_{i, k\in\mathbb N_0}$ have generating function~\eqref{fgxtrsr}. 
For $\omega\in\Phi'$ and $s\in\mathbb C$, we have, in view of Remarks~\ref{bydtrsw5w} and~\ref{cfta4a4zz}, 
\begin{align}
G(s\omega, \xi)&=\sum_{k=0}^\infty \frac1{k!}\sum_{i=0}^k s^i\langle\omega^{\otimes i}, P_{ik}\xi^{\otimes k}\rangle%\notag\\
%&
=\sum_{i=0}^\infty s^i\sum_{k=i}^\infty\frac1{k!}\langle\omega^{\otimes i}, P_{ik}\xi^{\otimes k}\rangle\notag\\
&=\sum_{i=0}^\infty s^i\bigg\langle \omega^{\otimes i}, \sum_{k=i}^\infty\frac1{k!} P_{ik}\xi^{\otimes k}\bigg\rangle.\label{dxtrsrtestres}
\end{align}
On the other hand, 
\begin{equation}\label{ctsrw5}
\exp\big[\langle s\omega, B(\xi)\rangle\big]A(\xi)=\sum_{i=0}^\infty s^i\bigg\langle\omega^{\otimes i}, \frac1{i!}\, B(\xi)^{\odot i}A(\xi)\bigg\rangle.
\end{equation}
Formulas \eqref{dxtrsrtestres} and \eqref{ctsrw5} imply \eqref{cydtyds}. The converse statement is then obvious.
\end{proof}

We define a map 
$\mathcal I:\mathcal S(\Phi)\to \mathbb S(\Phi)$ by $\mathcal I(A(\xi), B(\xi))=P$, where $P\in\mathbb S(\Phi)$ has generating function \eqref{fgxtrsr}.

\begin{proposition}\label{vgfdstt} 

(i) The $\mathbb S(\Phi)$ is a subgroup of $\mathbb M(\Phi)$. The $\mathbb A(\Phi)$ is an abelian normal subgroup of $\mathbb S(\Phi)$, $\mathbb B(\Phi)$ is a subgroup of $\mathbb S(\Phi)$, and $\mathbb S(\Phi)$ is the semidirect product of $\mathbb A(\Phi)$ and $\mathbb B(\Phi)$, i.e., $\mathbb S(\Phi)=\mathbb A(\Phi)\rtimes\mathbb B(\Phi)$. 

(ii) The following maps are group isomorphisms:
\begin{equation}\label{ctrsw56u}
\mathcal I:\mathcal S(\Phi)\to \mathbb S(\Phi), \quad \mathcal I\restriction_{\mathcal F_0(\Phi)}:\mathcal F_0(\Phi)\to \mathbb A(\Phi), \quad
 \mathcal I\restriction_{\mathcal F_1(\Phi)}:\mathcal F_1(\Phi)\to \mathbb B(\Phi).\end{equation}
\end{proposition}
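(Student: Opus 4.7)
The plan is to establish everything by transporting the known group structure on $\mathcal S(\Phi)$ (from Subsection~3.4) to $\mathbb S(\Phi)$ via the map $\mathcal I$, using Proposition~\ref{vhydye} as the main computational tool.

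First I would show that $\mathcal I:\mathcal S(\Phi)\to\mathbb S(\Phi)$ is a bijection. Surjectivity holds by the very definition of $\mathbb S(\Phi)$. Injectivity follows from Proposition~\ref{bhude7i8}(ii): given the generating function of a Sheffer operator $P$, the pair $(A(\xi),B(\xi))$ is recovered from the entries $P_{0n}$ and $P_{1n}$ through the recurrence~\eqref{vgd6u4i}--\eqref{ctre6uu}, hence is unique.

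The key step is to verify the identity
\[
P^{(1)}P^{(2)} \;=\; \mathcal I\!\left((A^{(1)}(\xi),B^{(1)}(\xi))\ast(A^{(2)}(\xi),B^{(2)}(\xi))\right)
\]
whenever $P^{(j)}=\mathcal I(A^{(j)}(\xi),B^{(j)}(\xi))$, $j=1,2$. Using the block-matrix product rule \eqref{rtse5wu656}, for each $i\in\mathbb N_0$,
\begin{align*}
i!\sum_{k=i}^\infty \frac{1}{k!}(P^{(1)}P^{(2)})_{ik}\xi^{\otimes k}
&= i!\sum_{j=i}^\infty P^{(1)}_{ij}\sum_{k=j}^\infty\frac{1}{k!}P^{(2)}_{jk}\xi^{\otimes k}\\
&= A^{(2)}(\xi)\sum_{j=i}^\infty \frac{i!}{j!}P^{(1)}_{ij}\bigl(B^{(2)}(\xi)\bigr)^{\odot j},
\end{align*}
where in the second equality I applied \eqref{cydtyds} to $P^{(2)}$. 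Now applying \eqref{cydtyds} to $P^{(1)}$ with the formal argument $B^{(2)}(\xi)\in\mathcal F_1(\Phi)$ substituted in place of $\xi$ (this substitution is legitimate by the definition of composition in \eqref{vcxdeay543q}, since $B^{(2)}(\xi)$ has no constant term), the last sum equals $\bigl(B^{(1)}(B^{(2)}(\xi))\bigr)^{\odot i}\,A^{(1)}(B^{(2)}(\xi))$. Combined with \eqref{tsay4w}, this gives exactly $(B(\xi))^{\odot i}A(\xi)$ for $(A(\xi),B(\xi))=(A^{(1)},B^{(1)})\ast(A^{(2)},B^{(2)})$, so by the ``if'' direction of Proposition~\ref{vhydye}, $P^{(1)}P^{(2)}=\mathcal I(A(\xi),B(\xi))\in\mathbb S(\Phi)$. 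In particular, $\mathbb S(\Phi)$ is closed under the product. For inverses, the pair $(A(\xi),B(\xi))^{-1}$ in $\mathcal S(\Phi)$ is given by \eqref{vfyrds5ws5a}, and the element $\mathcal I$ sends it to satisfies $\mathcal I((A,B))\cdot\mathcal I((A,B)^{-1})=\mathcal I(\text{identity})=\mathbf 1$, so inverses in $\mathbb M(\Phi)$ of Sheffer operators are Sheffer. Hence $\mathbb S(\Phi)$ is a subgroup of $\mathbb M(\Phi)$ and $\mathcal I$ is a group isomorphism.

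Next, under $\mathcal I$, elements with $B(\xi)=\xi$ go to $\mathbb A(\Phi)$ and elements with $A(\xi)=1$ go to $\mathbb B(\Phi)$, by definition of Appell and umbral operators. Identifying $\mathcal F_0(\Phi)$ and $\mathcal F_1(\Phi)$ with the subgroups $\{(A(\xi),\xi)\}$ and $\{(1,B(\xi))\}$ of $\mathcal S(\Phi)$, the restrictions in \eqref{ctrsw56u} are bijective group homomorphisms. The claims in part~(i) about $\mathbb A(\Phi)$ being an abelian normal subgroup, $\mathbb B(\Phi)$ being a subgroup, and $\mathbb S(\Phi)=\mathbb A(\Phi)\rtimes\mathbb B(\Phi)$ then transfer directly from the corresponding statements for $\mathcal S(\Phi)=\mathcal F_0(\Phi)\rtimes\mathcal F_1(\Phi)$ established in Subsection~3.4.

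The main obstacle is the middle computation: one must handle the formal substitution $\eta\mapsto B^{(2)}(\xi)$ inside the identity \eqref{cydtyds} carefully, making sure that the double sum over $j$ and $k$ can be reorganized as asserted and that the composition $(A^{(1)}\circ B^{(2)})\cdot A^{(2)}$ and $B^{(1)}\circ B^{(2)}$ indeed emerge. Once this identity is written out, everything else is bookkeeping via Proposition~\ref{vhydye} and the results of Subsection~3.4.
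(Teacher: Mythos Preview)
Your proposal is correct and follows essentially the same approach as the paper: both prove bijectivity of $\mathcal I$ via Proposition~\ref{bhude7i8}(ii), then establish the homomorphism identity $\mathcal I\big((A^{(1)},B^{(1)})\ast(A^{(2)},B^{(2)})\big)=P^{(1)}P^{(2)}$ by the very same computation---expanding $i!\sum_k\frac{1}{k!}(P^{(1)}P^{(2)})_{ik}\xi^{\otimes k}$ via \eqref{rtse5wu656}, applying \eqref{cydtyds} first to $P^{(2)}$ and then to $P^{(1)}$ with $B^{(2)}(\xi)$ substituted for $\xi$---and finally transfer the semidirect-product structure from $\mathcal S(\Phi)$. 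The paper's write-up is slightly terser (it does not spell out the inverse argument separately, since that follows automatically once $\mathcal I$ is a bijective homomorphism), but the substance is identical.
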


 We will call $\mathbb S(\Phi)$ the \emph{Sheffer group (on $\Phi'$)}, $\mathbb A(\Phi)$ the \emph{Appell group}, and $\mathbb B(\Phi)$ the \emph{umbral group}.

 \begin{proof}[Proof of Proposition~\ref{vgfdstt} ] 
By Proposition~\ref{bhude7i8} (ii), the three maps in formula \eqref{ctrsw56u} are bijective. Since $\mathcal S(\Phi)$ is a group and $\mathcal S(\Phi):=\mathcal F_0(\Phi) \rtimes\mathcal F_1(\Phi)$, we only need to prove the following

{\it Statement}. For $m=1, 2$, let $(A^{(m)}(\xi), B^{(m)}(\xi))\in\mathcal S(\Phi)$ and let 
 $$P^{(m)}=[P^{(m)}_{ik}]_{i, k\in\mathbb N_0}:=\mathcal I(A^{(m)}(\xi), B^{(m)}(\xi))\in\mathbb S(\Phi).$$ 
 Then
\begin{equation}\label{fxste66}
\mathcal I\big((A^{(1)}(\xi), B^{(1)}(\xi))\ast (A^{(2)}(\xi), B^{(2)}(\xi))\big)=P^{(1)}P^{(2)}.
\end{equation}

Indeed, let $P=[P_{ik}]_{i, k\in\mathbb N_0}:=P^{(1)}P^{(2)}$. We have, by Proposition~\ref{vhydye}, 
\begin{align}
 i!\sum_{k=i}^\infty\frac{1}{k!} P_{ik}\xi^{\otimes k}&= i!\sum_{k=i}^\infty\frac{1}{k!} \sum_{l=i}^k P_{il}^{(1)}P_{lk}^{(2)}\xi^{\otimes k}%\notag\\&
 =i!\sum_{l=i}^\infty\frac1{l!}\, P_{il}^{(1)}l!\sum_{k=l}^\infty \frac1{k!}\, P_{lk}^{(2)}\xi^{\otimes k}\notag\\
 &=i!\sum_{l=i}^\infty\frac1{l!}\, P_{il}^{(1)}\big(B^{(2)}(\xi)^{\odot l}A^{(2)}(\xi)\big)=\bigg(i!\sum_{l=i}^\infty\frac1{l!}\, P_{il}^{(1)}\big(B^{(2)}(\xi)^{\odot l}\big)\bigg)A^{(2)}(\xi)\notag\\
 &=B^{(1)}(B^{(2)}(\xi))^{\odot i}A^{(1)}(B^{(2)}(\xi))A^{(2)}(\xi).\label{vgcxaw45678}
\end{align}
Formulas \eqref{vcyrtsw5u}, \eqref{tsay4w}, \eqref{vgcxaw45678} and Proposition~\ref{vhydye} imply \eqref{fxste66}.
 \end{proof}
 
 \begin{lemma}\label{vct6eux}
 The $\mathbb S(\Phi)$, $\mathbb A(\Phi)$, and $\mathbb B(\Phi)$ are closed subsets of $\mathbb M(\Phi)$.
 \end{lemma}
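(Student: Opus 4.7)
The plan is to leverage the algebraic characterizations given in Corollary~\ref{vgy7kus} and observe that each defining relation is continuous in the topology of $\mathbb M(\Phi)$. Recall that the global parametrization $V \mapsto \mathbf 1 + V$ identifies $\mathbb M(\Phi)$ with $\mathbb V(\Phi)$, which carries the product topology of the spaces $\mathcal L(\Phi^{\hat\odot k}, \Phi^{\hat\odot i})$ for $0 \le i < k$. In particular, each coordinate projection $\pi_{ik}: P \mapsto P_{ik}$ is continuous, and each target space is Hausdorff (operators in $\mathcal L(\Phi^{\hat\odot k}, \Phi^{\hat\odot i})$ are separated by their values on the tensor powers $\xi^{\otimes k}$, cf.\ Remark~\ref{cfsthewdd}), so $\{0\}$ is closed.

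By Corollary~\ref{vgy7kus}(i), a matrix $P \in \mathbb M(\Phi)$ belongs to $\mathbb S(\Phi)$ if and only if, for every pair $(k,n)$ with $2 \le k < n$, one has
$$P_{kn} - g_{kn}(P_{01}, \dots, P_{0,n-k}, P_{12}, \dots, P_{1,n-k+1}) = 0.$$
Since the projections $\pi_{ik}$ are continuous and the maps $g_{kn}$ are continuous (in fact analytic) by Corollary~\ref{vgy7kus}(i), the left-hand side defines a continuous function $\mathbb M(\Phi) \to \mathcal L(\Phi^{\hat\odot n}, \Phi^{\hat\odot k})$, whose zero set is closed. Hence $\mathbb S(\Phi)$ is a countable intersection of closed sets, and therefore closed.

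For $\mathbb B(\Phi)$ and $\mathbb A(\Phi)$, I intersect $\mathbb S(\Phi)$ with further closed conditions drawn from Corollary~\ref{vgy7kus}(ii)--(iii). The set $\mathbb B(\Phi)$ is cut out from $\mathbb S(\Phi)$ by the equations $P_{0n} = 0$ for $n \ge 1$, each defining a closed subset since $\pi_{0n}$ is continuous. The set $\mathbb A(\Phi)$ is cut out from $\mathbb S(\Phi)$ by the equations $P_{1n} - n!\, P_{0,n-1} \odot \mathbf 1_1 = 0$ for $n \ge 2$. These are closed provided that the map $\alpha \mapsto \alpha \odot \mathbf 1_1$ is continuous from $(\Phi^{\hat\odot(n-1)})'$ into $\mathcal L(\Phi^{\hat\odot n}, \Phi)$; at the Banach space level this follows immediately from the norm estimate for the injective tensor product recalled in Subsection~\ref{vcftstwe6}, and at the (LB)/projective-limit level it propagates by composition with the structural projections.

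The only point requiring any attention is the last continuity assertion for the $\odot$ operation; beyond that, the lemma is essentially a formal consequence of the continuity of the parametrization $(P_{0j}, P_{1l}) \mapsto P$ of $\mathbb S(\Phi)$ built in Corollary~\ref{vgy7kus}(i), so there is no substantive obstacle.
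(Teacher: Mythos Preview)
Your proof is correct and follows essentially the same approach as the paper's: both rely on Corollary~\ref{vgy7kus} and the continuity of the maps $g_{kn}$, with the paper phrasing the argument via nets and you phrasing it via preimages of $\{0\}$ under continuous maps. Your extra remark on the continuity of $\alpha\mapsto\alpha\odot\mathbf 1_1$ makes explicit a point the paper absorbs into the word ``similarly.''
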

 
 \begin{proof}
 Let $(P^{\alpha})$ be a net in $\mathbb M(\Phi)$ such that each $P^\alpha$ belongs to $\mathbb S(\Phi)$ and $P^\alpha\to P$ in $\mathbb M(\Phi)$. Let $[P_{kn}^\alpha]_{k, n\in\mathbb N_0}$ and $[P_{kn}]_{k, n\in\mathbb N_0}$ be the block-matrices of $P^\alpha$ and $P$, respectively. By a property of the product topology, we then have, for each $0\le k<n$, $P_{kn}^\alpha\to P_{kn}$ in $\mathcal L(\Phi^{\hat\odot n}, \Phi^{\hat\odot k})$. By Corollary~\ref{vgy7kus} (i), 
\begin{equation}\label{vgtu6e7r890y8}
P_{kn}^\alpha=g_{kn}(P_{01}^\alpha, \dots, P_{0\, n-k}^\alpha, P_{12}^\alpha, \dots, P_{1\, n-k+1}^\alpha), \quad 2\le k<n.\end{equation}
By the continuity of the function $g_{kn}$, the right-hand side of formula \eqref{vgtu6e7r890y8} converges to $g_{kn}(P_{01}, \dots, P_{0\, n-k}, P_{12}, \dots, P_{1\, n-k+1})$, whereas the left-hand side converges to $P_{kn}$. Thus, $P\in\mathbb S(\Phi)$ by Corollary~\ref{vgy7kus} (i), and so $\mathbb S(\Phi)$ is closed in $\mathbb M(\Phi)$. 

One can similarly prove that $\mathbb A(\Phi)$ and $\mathbb B(\Phi)$ are closed subsets of $\mathbb M(\Phi)$ by using Corollary~\ref{vgy7kus} (ii) and (iii). 
 \end{proof}

\subsection{The $\mathbb S(\Phi)$ as a Lie group}

For the definition of an embedded Lie subgroup, see Subsection~\ref{app:EmbLieSubGroup} in the Appendix.

%Below we will use the following concept of a Lie subgroup. If $G$ is a Lie group, then one says that $N$ is a {\it Lie subgroup of $G$} if $N$ is both a subgroup of $G$ and a submanifold of~$G$, see e.g.\ \cite[Section~8]{Gloeckner} for detail. We also refer to \cite[Section~8]{Gloeckner} for the definition of the stronger concept of an {\it embedded Lie subgroup}.
 
\begin{theorem}\label{cftstw6u}

(i) Let $\log:\mathbb M(\Phi)\to\mathbb V(\Phi)$ be the logarithm map given by \eqref{vctre5y67}. Define
\begin{equation*}
\mathfrak s(\Phi):=\log\big(\mathbb S(\Phi)\big), \quad \mathfrak a(\Phi):=\log\big(\mathbb A(\Phi)\big), \quad \mathfrak b(\Phi):=\log\big(\mathbb B(\Phi)\big).\end{equation*}
Then $\mathfrak s(\Phi)$, $\mathfrak a(\Phi)$ and $\mathfrak b(\Phi)$ are closed vector subspaces of $\mathbb V(\Phi)$. Hence, 
 $\mathbb S(\Phi)$, $\mathbb A(\Phi)$, and $\mathbb B(\Phi)$ are embedded Lie subgroups of $\mathbb M(\Phi)$, and $\mathbb A(\Phi)$ and $\mathbb B(\Phi)$ are also embedded Lie subgroups of $\mathbb S(\Phi)$. 
  In particular, $\mathbb S(\Phi)$ is a Lie group modeled on the complete l.c.s.\ $\mathfrak s(\Phi)$ through the global parametrization $\exp:\mathfrak s(\Phi)\to\mathbb S(\Phi)$, and the Lie algebra of $\mathbb S(\Phi)$ can be identified with $\mathfrak s(\Phi)$.

(ii)
The group isomorphism $\mathcal I:\mathcal S(\Phi)\to \mathbb S(\Phi)$ and its inverse 
$\mathcal I^{-1}:\mathbb S(\Phi)\to \mathcal S(\Phi)$ are analytic maps.

(iii) Define a map $\mathcal R:\mathcal W(\Phi)\to\mathfrak s(\Phi)$ by $\mathcal R:=\log(\mathcal I\operatorname{EXP})$. Then $\mathcal R$ is bijective. Furthermore, let $\alpha(\xi)=\sum_{k=1}^\infty\alpha_k\xi^{\otimes k}\in\mathcal W_0(\Phi)$, $\beta(\xi)=\sum_{k=2}^\infty\beta_k\xi^{\otimes k}\in\mathcal W_1(\Phi)$, and let 
$V=[V_{ik}]_{i, k\in\mathbb N_0}:=\mathcal R (\alpha(\xi), \beta(\xi))$.
 Then
 \begin{equation}\label{vcyrte6ue}
V_{ik}=(k)_{k-i+1}\, \beta_{k-i+1}\odot\mathbf 1_{i-1}+(k)_{k-i}\, \alpha_{k-i}\odot\mathbf1_i, \quad 0\le i<k, 
\end{equation}
where $(k)_l:=k(k-1)\dotsm(k-l+1)$ is the Pochhammer symbol.

(iv) The $\mathbb S(\Phi)$, $\mathbb A(\Phi)$, and $\mathbb B(\Phi)$ are regular Lie groups. 

\end{theorem}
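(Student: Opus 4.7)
The plan is to prove parts (ii), (iii), (i), (iv) in that order, since each relies on its predecessor. For part (ii), analyticity of $\mathcal I$ is read off directly from Proposition~\ref{bhude7i8}~(i): formula \eqref{vgd6u4i} gives $P_{0n}=n!A_n$, and \eqref{vcftstezz} expresses each $P_{kn}$ as a polynomial in the coefficients $A_m$ and $B_l$; by Lemmas~\ref{tfy7e6e} and~\ref{cgtsrta} (i) together with Remark~\ref{cxfet678i9}, each such coordinate map is analytic. Conversely, Proposition~\ref{bhude7i8}~(ii) yields $A_n=P_{0n}/n!$ and a triangular recurrence expressing $B_n$ as an analytic function of $P_{01}, \dots, P_{0\,n-1}, P_{12}, \dots, P_{1n}$, from which an induction gives the analyticity of $\mathcal I^{-1}$.

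For part (iii), the bijectivity of $\mathcal R=\log\circ\, \mathcal I\circ\operatorname{EXP}$ is immediate, since each factor is bijective (Propositions~\ref{cydsawa}, \ref{vgfdstt}~(ii), and the definition of $\mathfrak s(\Phi)$, together with bijectivity of $\log$ from Proposition~\ref{gfstew5}). For the explicit formula~\eqref{vcyrte6ue}, I would exploit that $\mathcal I$ is a group isomorphism, so the curve $t\mapsto P(t):=\mathcal I\bigl(\operatorname{EXP}(t(\alpha(\xi), \beta(\xi)))\bigr)$ is a one-parameter subgroup of $\mathbb S(\Phi)\subset\mathbb M(\Phi)$; hence $P(t)=\exp(tV)$ with $V=P'(0)=\mathcal R(\alpha, \beta)$. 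Expansion of formula~\eqref{vtyrswu53w} gives $\operatorname{EXP}(t(\alpha, \beta))=(1+t\alpha(\xi)+O(t^2), \xi+t\beta(\xi)+O(t^2))$. Substituting into Proposition~\ref{bhude7i8}~(i) (with its $(k, n)$-indices matched to the present $(i, k)$-indices, so that the prefactor becomes $k!/i!$ and the $B$-product has $i$ factors), the linearization of the sum $\sum_{l_1+\dots+l_i=k} B_{l_1}(t)\odot\cdots\odot B_{l_i}(t)$ is determined by the tuples with exactly one $l_j=k-i+1$ and the remaining $l_j=1$: there are $i$ such tuples, each contributing $t\beta_{k-i+1}\odot\mathbf 1_{i-1}$, which combined with the prefactor $k!/i!$ produces the Pochhammer coefficient $(k)_{k-i+1}=k!/(i-1)!$. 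The second sum in~\eqref{vcftstezz} is linear in the $A_m$; at first order only $m=k-i$ with all $l_j=1$ survives, yielding the $\alpha$-term with coefficient $(k)_{k-i}=k!/i!$. For $i=0$ the $\beta$-term vanishes since $(k)_{k+1}=0$, consistently recovering $V_{0k}=k!\alpha_k=P'_{0k}(0)$.

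For part (i), formula~\eqref{vcyrte6ue} shows that $\mathcal R$ is $\mathbb{C}$-linear, so $\mathfrak s(\Phi)=\mathcal R(\mathcal W(\Phi))$ is a vector subspace of $\mathbb V(\Phi)$; its closedness follows from Lemma~\ref{vct6eux} combined with the fact that $\log:\mathbb M(\Phi)\to\mathbb V(\Phi)$ is a homeomorphism (Proposition~\ref{gfstew5}). For $\mathfrak a(\Phi)$ and $\mathfrak b(\Phi)$, note that by Proposition~\ref{vgfdstt}~(ii) and the explicit form of $\operatorname{EXP}$ in~\eqref{vtyrswu53w}, we have $\operatorname{EXP}(\mathcal W_0(\Phi)\times\{0\})=\mathcal F_0(\Phi)$ and $\operatorname{EXP}(\{0\}\times\mathcal W_1(\Phi))=\mathcal F_1(\Phi)$, hence $\mathfrak a(\Phi)=\mathcal R(\mathcal W_0(\Phi)\times\{0\})$ and $\mathfrak b(\Phi)=\mathcal R(\{0\}\times\mathcal W_1(\Phi))$, which are closed subspaces of $\mathfrak s(\Phi)$ and thus of $\mathbb V(\Phi)$. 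The embedded Lie subgroup claims then follow from the Appendix~B framework: $\exp:\mathbb V(\Phi)\to\mathbb M(\Phi)$ restricted to the closed subspace $\mathfrak s(\Phi)$ (respectively $\mathfrak a(\Phi)$, $\mathfrak b(\Phi)$) provides a global parametrization of $\mathbb S(\Phi)$ (respectively $\mathbb A(\Phi)$, $\mathbb B(\Phi)$), and $\exp\!\restriction_{\mathfrak s(\Phi)}$ similarly restricts to parametrizations of $\mathbb A(\Phi), \mathbb B(\Phi)$ inside $\mathbb S(\Phi)$.

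For part (iv), regularity is preserved under analytic Lie group isomorphisms with analytic inverse, so part (ii) together with Proposition~\ref{acufytFTCD} (regularity of $\mathcal S(\Phi)$) gives regularity of $\mathbb S(\Phi)$; the analogous argument using $\mathcal I\!\restriction_{\mathcal F_0(\Phi)}$, $\mathcal I\!\restriction_{\mathcal F_1(\Phi)}$ and Propositions~\ref{vytde6e6} and~\ref{vcte6u} yields regularity of $\mathbb A(\Phi)$ and $\mathbb B(\Phi)$. The main obstacle throughout is the combinatorial linearization in part (iii): one must carefully identify which tuples $(l_1, \dots, l_i)$ in the symmetric tensor product $B_{l_1}(t)\odot\cdots\odot B_{l_i}(t)$ survive at first order in $t$, count their multiplicities correctly, and verify that the resulting Pochhammer coefficients agree with~\eqref{vcyrte6ue}; once this is in hand, parts (i) and (iv) follow from general transfer principles and the closed-subspace criterion of Appendix~B.
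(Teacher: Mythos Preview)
Your proposal is correct and follows essentially the same architecture as the paper's proof: the one-parameter subgroup $P(t)=\mathcal I\bigl(\operatorname{EXP}(t(\alpha,\beta))\bigr)$, the identification $V=P'(0)$, closedness via Lemma~\ref{vct6eux} and the homeomorphism property of $\log$, and transfer of regularity through the analytic isomorphism $\mathcal I$.

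The one genuine difference is how you extract formula~\eqref{vcyrte6ue}. You linearize the explicit matrix-entry formula~\eqref{vcftstezz} by counting which tuples $(l_1,\dots,l_i)$ survive at order $t$. The paper instead invokes Proposition~\ref{vhydye}: from $i!\sum_{k\ge i}\frac1{k!}P_{ik}(t)\xi^{\otimes k}=B(t,\xi)^{\odot i}A(t,\xi)$ it computes
\[
\frac{d}{dt}\Big|_{t=0}\bigl(B(t,\xi)^{\odot i}A(t,\xi)\bigr)=i\,\beta(\xi)\odot\xi^{\otimes(i-1)}+\xi^{\otimes i}\alpha(\xi),
\]
and reads off~\eqref{vcyrte6ue} by matching coefficients of $\xi^{\otimes k}$. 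Your route is more combinatorial but equally valid; the paper's is cleaner because it bypasses the tuple count entirely, and it makes the linearity of $\mathcal R$ immediately visible without a separate check. A small caution on your ordering: statement~(ii) concerns analyticity of $\mathcal I$ as a map \emph{into} $\mathbb S(\Phi)$, which presupposes a manifold structure on $\mathbb S(\Phi)$; the paper handles this by first proving~(i) (so that $\mathbb S(\Phi)$ is an embedded Lie subgroup) and only then upgrading the obvious analyticity of $\mathcal I:\mathcal S(\Phi)\to\mathbb M(\Phi)$ to $\mathcal I:\mathcal S(\Phi)\to\mathbb S(\Phi)$. Your argument for $\mathcal I^{-1}$ via the $P_{0n},P_{1n}$ coordinates is exactly the paper's $\operatorname{Proj}$/$\mathcal J$ argument, so nothing is lost, but you should be explicit that the chart on $\mathbb S(\Phi)$ used in~(ii) is the one coming from~(i).
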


\begin{remark}In view of statement (ii) of Theorem~\ref{cftstw6u}, one can alternatively think of $\mathbb S(\Phi)$ as a Lie group modeled on the l.c.s.\ $\mathcal W(\Phi)$ through the global parametrization $\mathcal I\operatorname{EXP}:\mathcal W(\Phi)\to\mathbb S(\Phi)$, or through the global parametrization $\mathcal I\mathcal T:\mathcal W(\Phi)\to\mathbb S(\Phi)$ (where $\mathcal T$ is given by \eqref{vcstsreaea}). 
\end{remark}

\begin{proof}[Proof of Theorem~\ref{cftstw6u}] (i) Similarly to the proofs in Section~\ref{vgydi67r}, we conclude from formulas~\eqref{vgd6u4i}, \eqref{vcftstezz} that $\mathcal I$, considered as a map from $\mathcal S(\Phi)$ to $\mathbb M(\Phi)$, is analytic.

Fix $P\in\mathbb S(\Phi)$. Define $(\alpha(\xi), \beta(\xi)):=\operatorname{LOG}(\mathcal I^{-1}P)\in\mathcal W(\Phi)$ and
\begin{equation}\label{caaoiou9iuy}
(A(t, \xi), B(t, \xi)):=\operatorname{EXP}\big(t(\alpha(\xi), \beta(\xi))\big), \quad t\in\R.
\end{equation}
Then $\big((A(t, \xi), B(t, \xi))\big)_{t\in\R}$ is a one-parameter subgroup of $\mathcal S(\Phi)$ satisfying 
$$(A(1, \xi), B(1, \xi))=\mathcal I^{-1}P.$$
For $t\in\mathbb R$, define $P(t)=[P_{ik}(t)]_{i, k\in\mathbb N_0}:=\mathcal I(A(t, \xi), B(t, \xi))$.
Then, by Proposition~\ref{vgfdstt}~(ii) and the analyticity of the map $\mathcal I$, $(P(t))_{t\in\R}$ is a one-parameter subgroup of $\mathbb M(\Phi)$ with $P(1)=P$. Hence, $\log(P)=\frac d{dt}\big|_{t=0}P(t)$, 
where the limit is taken in $\mathbb U(\Phi)$.

By the definition of $P(t)$ and Proposition~\ref{vhydye}, we have
\begin{equation}\label{cfgdtsds}
i!\sum_{k=i}^\infty\frac{1}{k!} P_{ik}(t)\xi^{\otimes k}= B(t, \xi)^{\odot i}A(t, \xi), \quad i\in\mathbb N_0.\end{equation}
Using formulas \eqref{cfrtsw5yw}--\eqref{vcftse5a5ywu}, one can easily show that
$$\frac d{dt}\big(B(t, \xi)^{\odot i}A(t, \xi)\big)=iB'(t, \xi)\odot B(t, \xi)^{\odot(i-1)}A(t, \xi)+B(t, \xi)^{\odot i}A'(t, \xi). $$
Hence, by \eqref{caaoiou9iuy}, 
\begin{align}
\frac d{dt}\Big|_{t=0}\big(B(t, \xi)^{\odot i}A(t, \xi)\big)&=i\beta(\xi)\odot\xi^{\otimes(i-1)}+\xi^{\otimes i}\alpha(\xi)\notag\\
&=\sum_{k=i+1}^\infty\big(i\beta_{k-i+1}\odot\mathbf 1_{i-1}+\alpha_{k-i}\odot\mathbf1_i\big)\xi^{\otimes k}, \quad i\in\mathbb N_0, \label{cesarwe44waq4}
\end{align}
where $\alpha(\xi)=\sum_{k=1}^\infty\alpha_k\xi^{\otimes k}$ and $\beta(\xi)=\sum_{k=2}^\infty\beta_k\xi^{\otimes k}$. 

Denote $V=[V_{ik}]_{i, k\in\mathbb N_0}:=\log(P)$. Since $V_{ik}=\frac d{dt}\big|_{t=0}P_{ik}(t)$, formulas \eqref{cfgdtsds} and \eqref{cesarwe44waq4} imply that each $V_{ik}$ is given by formula \eqref{vcyrte6ue}.

By Propositions~\ref{cydsawa} and~\ref{vgfdstt}~(ii), the map $\operatorname{LOG}(\mathcal I^{-1}):\mathbb S(\Phi)\to \mathcal W(\Phi)$. The $\mathfrak s(\Phi)$ consists of all $V=[V_{ik}]_{i, k\in\mathbb N_0}\in\mathbb V(\Phi)$ for which $V_{ik}$ ($0\le i<k$) is of the form \eqref{vcyrte6ue} for some $\alpha_k\in(\Phi^{\hat\odot k})'$ ($k\ge1$) and $\beta_k\in\mathcal L(\Phi^{\hat\odot k}, \Phi)$ ($k\ge2$). In particular, $\mathfrak s(\Phi)$ is a vector subspace of $\mathbb V(\Phi)$. Lemma~\ref{vct6eux} then implies that $\mathfrak s(\Phi)$ is closed in $\mathbb V(\Phi)$. 

We similarly prove that both $\mathfrak a(\Phi)$ and $\mathfrak b(\Phi)$ are closed vector subspaces of $\mathbb V(\Phi)$. This proves statement~(i) of the theorem.

(ii) Recall that $\mathcal I$, considered as a map from $\mathcal S(\Phi)$ to $\mathbb M(\Phi)$, is analytic. Since $\mathcal I$ takes values in $\mathbb S(\Phi)$ and $\mathbb S(\Phi)$ is an embedded Lie subgroup of $\mathbb M(\Phi)$,  the map $\mathcal I:\mathcal S(\Phi)\to \mathbb S(\Phi)$ is analytic. To prove that its inverse is also analytic, we proceed as follows. 

Recall that we have endowed $\mathbb S(\Phi)$ with the topology induced by $\mathbb M(\Phi)$. Define the map $\operatorname{Proj}:\mathbb S(\Phi)\to\mathcal W(\Phi)$ which sends $P=[P_{ik}]_{i, k\in\mathbb N_0}\in\mathbb S(\Phi)$ to 
$$\operatorname{Proj} P:=\big((P_{0k})_{k\ge1}, (P_{1k})_{k\ge2}\big)\in\mathcal W(\Phi).$$
 By Corollary~\ref{vgy7kus}~(i), the map $\operatorname{Proj}$ is bijective. %Furthermore, it follows from the proof of Lemma~\ref{vct6eux} that a net $(P_\alpha)$ converges to $P$ in $\mathbb S(\Phi)$ if and only if the net $(\operatorname{Proj}P_\alpha)$ converges to $\operatorname{Proj}P$ in $\mathcal W(\Phi)$.
Obviously, the map $\operatorname{Proj}$ is analytic.

Define a map $\mathcal J:\mathcal W(\Phi)\to\mathcal S(\Phi)$, $\mathcal J:=\mathcal I^{-1}\operatorname{Proj}^{-1}$. Thus, $\mathcal J(\operatorname{Proj}P)=\mathcal I^{-1}P$ for $P\in\mathbb S(\Phi)$. Then, to prove that the map $\mathcal I^{-1}:\mathbb S(\Phi)\to\mathcal S(\Phi)$ is analytic, it suffices to prove that the map $\mathcal J:\mathcal W(\Phi)\to\mathcal S(\Phi)$ is analytic. But the latter statement easily follows from~\eqref{vgd6u4i} and the recurrence formula~\eqref{ctre6uu}.

%\begin{dff}Thus, both maps $\mathcal I:\mathcal S(\Phi)\to \mathbb S(\Phi)$ and $\mathcal I^{-1}:\mathbb S(\Phi)\to \mathcal S(\Phi)$  are analytic. 
%The proof about equivalence of the} \dff{global parametrizations can be done similalry to that in Proposition~\ref{gfxsetser5}.
%\end{dff} 
%Thus, statement~(ii) of the theorem holds. 

(iii) The fact that the map $\mathcal R$ is bijective follows immediately from Propositions~\ref{cydsawa}, \ref{gfstew5} and \ref{vgfdstt} (ii). Formula~\eqref{vcyrte6ue} follows from the proof of part~(i) of the theorem.

 Finally, part (iv) of the theorem follows from Proposition~\ref{vgfdstt}~(ii), part~(ii) of the present theorem, and the corresponding results for the Lie groups $\mathcal S(\Phi)$, $\mathcal F_0(\Phi)$, and $\mathcal F_1(\Phi)$. \end{proof}

The following corollary is now obvious.

\begin{corollary}
The maps
$\mathfrak s(\Phi)\ni V\mapsto\exp(V)\in\mathbb S(\Phi)$, $\mathfrak a(\Phi)\ni V\mapsto\exp(V)\in\mathbb A(\Phi)$ and $\mathfrak b(\Phi)\ni V\mapsto\exp(V)\in\mathbb B(\Phi) $
are bijective, analytic and their inverse maps are analytic. Furthermore, these are exponential maps for the respective Lie groups. In particular, for $V\in \mathfrak s(\Phi)$, $\big(\exp(tV)\big)_{t\in\R}$ is a one-parameter subgroup of $\mathbb S(\Phi)$ and $\frac{d}{dt}\big|_{t=0}\exp(tV)=V$. 
\end{corollary}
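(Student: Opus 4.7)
The plan is to assemble the corollary directly from the results already proved, with essentially no new work required. By Proposition~\ref{gfstew5}, the map $\exp:\mathbb V(\Phi)\to\mathbb M(\Phi)$ is an analytic bijection with analytic inverse $\log$. By Theorem~\ref{cftstw6u}~(i), the sets $\mathfrak s(\Phi)$, $\mathfrak a(\Phi)$, $\mathfrak b(\Phi)$ are closed vector subspaces of $\mathbb V(\Phi)$, and by their very definitions, $\exp(\mathfrak s(\Phi))=\mathbb S(\Phi)$, $\exp(\mathfrak a(\Phi))=\mathbb A(\Phi)$, $\exp(\mathfrak b(\Phi))=\mathbb B(\Phi)$. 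Hence each of the three restrictions $\exp\restriction_{\mathfrak s(\Phi)}$, $\exp\restriction_{\mathfrak a(\Phi)}$, $\exp\restriction_{\mathfrak b(\Phi)}$ is a bijection onto the respective group, with inverse given by the corresponding restriction of $\log$.

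For analyticity, the plan is to observe that $\mathfrak s(\Phi)$ carries the subspace topology induced from $\mathbb V(\Phi)$, while $\mathbb S(\Phi)$ carries the subspace topology induced from $\mathbb M(\Phi)$ (recall that $\mathbb S(\Phi)$ is an embedded Lie subgroup of $\mathbb M(\Phi)$ by Theorem~\ref{cftstw6u}~(i)). The restriction of an analytic map between complete l.c.s.'s to a closed subspace of its domain is again analytic, so $\exp\restriction_{\mathfrak s(\Phi)}:\mathfrak s(\Phi)\to\mathbb M(\Phi)$ is analytic; since its image lies in the embedded subgroup $\mathbb S(\Phi)$, it is analytic as a map into $\mathbb S(\Phi)$. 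An identical argument yields analyticity of $\log\restriction_{\mathbb S(\Phi)}:\mathbb S(\Phi)\to\mathfrak s(\Phi)$, and the Appell and umbral cases are handled verbatim.

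It remains to identify $\exp\restriction_{\mathfrak s(\Phi)}$ with the exponential map of the Lie group $\mathbb S(\Phi)$ (and likewise for $\mathbb A(\Phi)$ and $\mathbb B(\Phi)$). Fix $V\in\mathfrak s(\Phi)$. Since $\mathfrak s(\Phi)$ is a vector subspace, $tV\in\mathfrak s(\Phi)$ for every $t\in\R$, so $\exp(tV)\in\mathbb S(\Phi)$. The proof of Proposition~\ref{gfstew5} shows that $\R\ni t\mapsto\exp(tV)\in\mathbb M(\Phi)$ is a smooth one-parameter subgroup satisfying $\frac{d}{dt}\big|_{t=0}\exp(tV)=V$; because $\mathbb S(\Phi)$ carries the induced topology, this curve is likewise a smooth one-parameter subgroup of $\mathbb S(\Phi)$. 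By the construction recalled in Subsection~\ref{app:ExpMap}, this is precisely the exponential map of $\mathbb S(\Phi)$ evaluated at $V$; the arguments for $\mathbb A(\Phi)$ and $\mathbb B(\Phi)$ are identical. No step presents a real obstacle: the corollary is essentially a bookkeeping consequence of Theorem~\ref{cftstw6u} together with Proposition~\ref{gfstew5}, once one is careful to distinguish the ambient Lie group $\mathbb M(\Phi)$ from its embedded subgroups.
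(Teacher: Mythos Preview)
Your proposal is correct and follows essentially the same approach as the paper, which simply declares the corollary ``obvious'' immediately after Theorem~\ref{cftstw6u}. You have carefully unpacked that obviousness: bijectivity comes from the very definitions $\mathfrak s(\Phi)=\log(\mathbb S(\Phi))$ etc., analyticity from Proposition~\ref{gfstew5} together with the embedded-subgroup structure of Theorem~\ref{cftstw6u}~(i), and identification with the exponential map from the one-parameter subgroup argument in the proof of Proposition~\ref{gfstew5}. Note that since $\mathbb S(\Phi)$ is, by Theorem~\ref{cftstw6u}~(i), modeled on $\mathfrak s(\Phi)$ through the global parametrization $\exp\restriction_{\mathfrak s(\Phi)}$ itself, the analyticity of $\exp\restriction_{\mathfrak s(\Phi)}$ and its inverse is in fact tautological (composing with the parametrization yields the identity on $\mathfrak s(\Phi)$), so your restriction argument, while correct, is slightly more than is needed.
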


\subsection{The Lie algebra $\mathfrak s(\Phi)$}\label{xra5aq354q}

Our next aim is to better understand the Lie algebra $\mathfrak s(\Phi)$ of the Lie group $\mathbb S(\Phi)$.

Let $\sigma\in\Phi'$ and $f^{(n)}\in\Phi^{\hat\odot n}$. As easily seen, the derivative in direction $\sigma$ of the monomial $\langle\omega^{\otimes n}, f^{(n)}\rangle$ is given by
\begin{equation}\label{zera4wyq}
(D_\sigma\langle\cdot^{\otimes n}, f^{(n)}\rangle)(\omega)=n\langle\omega^{\otimes(n-1)}\odot\sigma, f^{(n)}\rangle= n\langle\omega^{\otimes(n-1)}, (\sigma\odot\mathbf 1_{n-1})f^{(n)}\rangle.
\end{equation}
 Note that $\sigma\odot\mathbf 1_{n-1}\in\mathcal L(\Phi^{\hat\odot n}, \Phi^{\hat\odot(n-1)})$. Thus, $D_\sigma\in\mathcal L(\mathcal P(\Phi'))$.
 
 For each $\xi\in\Phi$, we define $M(\xi)\in\mathcal L(\mathcal P(\Phi'))$ as the operator of multiplication by the monomial $\langle\omega, \xi\rangle$:
\begin{equation*}
(M(\xi)p)(\omega):= \langle\omega, \xi\rangle p(\omega), \quad p\in\mathcal P(\Phi').
\end{equation*}
 
 As easily seen, the operators $D_\sigma$ ($\sigma\in\Phi'$) and $M(\xi)$ ($\xi\in\Phi$) form a representation of a Weyl algebra:
 \begin{align}
 &[D_{\sigma_1}, D_{\sigma_2}]=[M(\xi_1), M(\xi_2)]= 0, \quad \sigma_1, \sigma_2\in\Phi', \ \xi_1, \xi_2\in\Phi, \notag\\
 &[D_\sigma, M(\xi)]=\langle\sigma, \xi\rangle\mathbf1, \quad \sigma\in\Phi', \ \xi\in\Phi.\label{vtdrsesw3}
 \end{align}
 
 We define the gradient $\nabla$ on $\mathcal P(\Phi')$ so that, for each polynomial $p\in \mathcal P(\Phi')$ and a fixed $\omega\in\Phi'$, $\nabla p(\omega)\in\Phi$ and
$(D_\sigma p)(\omega)=\langle\sigma, \nabla p(\omega)\rangle$ for all $\sigma\in\Phi'$.
 Formula \eqref{zera4wyq} implies
\begin{equation}\label{fqdtfr}
(D_\sigma\langle\cdot^{\otimes n}, f^{(n)}\rangle)(\omega)=\langle\sigma, n(\omega^{\otimes(n-1)} \odot\mathbf 1_1)f^{(n)}\rangle\end{equation}
 and so
 \begin{equation}\label{cxerw5yu}
( \nabla\langle\cdot^{\otimes n}, f^{(n)}\rangle)(\omega)=n(\omega^{\otimes(n-1)}\odot\mathbf 1_1)f^{(n)}.\end{equation}
 We similarly define the $k$th power of the gradient $\nabla^k$ ($k\ge2$): 
 for each polynomial $p\in \mathcal P(\Phi')$ and a fixed $\omega\in\Phi'$, $\nabla^k p(\omega)\in\Phi^{\hat\odot k}$ and
 $$D_{\sigma_1}\dotsm D_{\sigma_k}\, p(\omega)=\langle\sigma_1\odot\dots\odot\sigma_k, \nabla^kp(\omega)\rangle, \quad\forall\sigma_1, \dots, \sigma_k\in\Phi'. $$
Similarly to \eqref{fqdtfr}, \eqref{cxerw5yu}, we obtain:
\begin{equation}\label{dszeaewaeu}
( \nabla^k\langle\cdot^{\otimes n}, f^{(n)}\rangle)(\omega)=(n)_k(\omega^{\otimes(n-k)}\odot\mathbf 1_k)f^{(n)}.\end{equation} 
 
 For $k\in\mathbb N$ and $\alpha_k\in(\Phi^{\hat\odot k})'$, formula \eqref{dszeaewaeu} implies, for a fixed $\omega\in\Phi'$, 
 \begin{align}
\alpha_k\big((\nabla^k\langle\cdot^{\otimes n}, f^{(n)}\rangle)(\omega)\big)&=(n)_k(\omega^{\otimes(n-k)}\odot \alpha_k)f^{(n)}\notag\\
 &=(n)_k\langle\omega^{\otimes(n-k)}, (\mathbf 1_{n-k}\odot \alpha_k)f^{(n)}\rangle.\label{vcrtew654uw}
 \end{align}
 Since $\mathbf 1_{n-k}\odot \alpha_k\in\mathcal L(\Phi^{\hat\odot n}, \Phi^{\hat\odot (n-k)})$, formula \eqref{vcrtew654uw} allows us to interpret $\alpha_k\nabla^k$ as an element of $\mathbb V(\Phi)$ by setting $(\alpha_k\nabla^kp)(\omega):=\alpha_k\big((\nabla^k p)(\omega)\big)$ for $p\in\mathcal P(\Phi')$. Note that
 $$\alpha_k\nabla^k\big( \mathcal P^{(n)}(\Phi')\big)\subset \mathcal P^{(n-k)}(\Phi'), $$
 where $\mathcal P^{(m)}(\Phi'):=\{0\}$ for $m<0$.
 Hence, for $\alpha(\xi)=\sum_{k=1}^\infty \alpha_k\xi^{\otimes k}\in\mathcal W_0(\Phi)$, we have
 $\sum_{k=1}^\infty \alpha_k\nabla^k\in\mathbb V(\Phi)$. Below we will use the notation $\alpha(\nabla):=\sum_{k=1}^\infty \alpha_k\nabla^k$.

 %Similarly to the definition of the gradient $\nabla$, we define an operation $M$ on $\mathcal P(\Phi')$ so that, for each polynomial $p\in \mathcal P(\Phi')$ and a fixed $\xi\in\Phi$, $Mp(\omega)\in\Phi'$ and
%$$
%M(\xi)p(\omega)=\langle Mp(\omega), \xi\rangle, \quad\forall \sigma\in\Phi'.$$
% Formula \eqref{bvyd6tre} implies
% $$Mp(\omega)=\omega p(\omega).$$

Similarly, for $k\ge2$ and $\beta_k\in\mathcal L(\Phi^{\hat\odot k}, \Phi)$, formula \eqref{dszeaewaeu} implies, for a fixed $\omega\in\Phi'$, 
 $$
 \beta_k\big((\nabla^k\langle\cdot^{\otimes n}, f^{(n)}\rangle)(\omega)\big)=(n)_k(\omega^{\otimes(n-k)}\odot \beta_k)f^{(n)}\in\Phi, $$
 and so
 \begin{align}
 \big\langle\omega, \beta_k\big((\nabla^k\langle\cdot^{\otimes n}, f^{(n)}\rangle)(\omega)\big)\big\rangle&=(n)_k\langle
 \omega, (\omega^{\otimes(n-k)}\odot \beta_k)f^{(n)} \rangle\notag\\
 %&=(n)_k\langle
 %\omega^{\otimes(n-k)}\odot (\omega B_k)f^{(n)} \rangle\notag\\
 &=(n)_k\langle\omega^{\otimes(n-k+1)}, (\mathbf 1_{n-k}\odot \beta_k)f^{(n)}\rangle.\label{cfsterewq}
 \end{align}
Since $\mathbf 1_{n-k}\odot \beta_k\in\mathcal L(\Phi^{\hat\odot n}, \Phi^{\hat\odot (n-k+1)})$, formula \eqref{cfsterewq} allows us to define
an operator $M(\beta_k\nabla^k)\in \mathbb V (\Phi)$ by setting
\begin{equation}\label{vcydtyrdst6}
\big(M(\beta_k\nabla^k) p\big)(\omega):=\big\langle \omega, \beta_k\big((\nabla^kp)(\omega)\big)\big\rangle, \quad p\in\mathcal P(\Phi').\end{equation}
In particular, for $\xi\in\Phi$, formulas \eqref{cfsterewq}, \eqref{vcydtyrdst6} yield
\begin{align}
\big(M(\beta_k\nabla^k) \langle\cdot^{\otimes n}, \xi^{\otimes n}\rangle\big)(\omega)&=(n)_k
\langle \omega^{\otimes(n-k+1)}, \xi^{\otimes(n-k)}\odot(\beta_k\xi^{\otimes k})\rangle\notag\\
&=(n)_k\langle\omega, \beta_k\xi^{\otimes k}\rangle\langle\omega^{\otimes(n-k)}, \xi^{\otimes(n-k)}\rangle \label{jiy8i}\\
&=(n)_k \big(M(\beta_k\xi^{\otimes k}) \langle\cdot^{\otimes(n-k)}, \xi^{\otimes(n-k)}\rangle\big)(\omega).\notag
\end{align}
Note that
$$M(\beta_k\nabla^k) \big(\mathcal P^{(n)}(\Phi')\big)\subset \mathcal P^{(n-k+1)}(\Phi').$$
Hence, for $\beta(\xi)=\sum_{k=2}^\infty \beta_k\xi^{\otimes k}\in\mathcal W_1(\Phi)$, we have $\sum_{k=2}^\infty M(\beta_k\nabla^k)\in\mathbb V(\Phi)$. Below we will use the notation
$M(\beta(\nabla)){: =}\sum_{k=2}^\infty M(\beta_k\nabla^k)$.

Recall the map $\mathcal R:\mathcal W(\Phi)\to\mathfrak s(\Phi)$ defined in Theorem~\ref{cftstw6u} (iii).

\begin{theorem}\label{dsw56w64w3} 
(i) For each $(\alpha(\xi), \beta(\xi))\in\mathcal W(\Phi)$, we have
\begin{equation}\label{xtsw5y4w35}
\mathcal R(\alpha(\xi), \beta(\xi))=\alpha(\nabla)+M(\beta(\nabla)).
\end{equation}

(ii) The Lie bracket $[\cdot, \cdot]$ on $\mathfrak s(\Phi)$ is the commutator of the continuous linear operators. Furthermore, for any $(\alpha^{(m)}(\xi), \beta^{(m)}(\xi))\in\mathcal W(\Phi)$ ($m=1, 2$), we have
\begin{align}
&\big[\alpha^{(1)}(\nabla)+M(\beta^{(1)}(\nabla)), \alpha^{(2)}(\nabla)+M(\beta^{(2)}(\nabla)) \big]\notag\\
&\quad =D_{\beta^{(2)}}\alpha^{(1)}(\nabla)- D_{\beta^{(1)}}\alpha^{(2)}(\nabla)+M\big(D_{\beta^{(2)}}\beta^{(1)}(\nabla)- D_{\beta^{(1)}}\beta^{(2)}(\nabla)\big).\label{bvgfyd}
\end{align}
\end{theorem}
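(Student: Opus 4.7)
My plan is to establish (i) by a direct block-matrix comparison, and then to derive (ii) from (i) by invoking the fact that $\mathcal R$ transports the Lie bracket intact, since it is built from exponential global parametrizations and the Lie group isomorphism $\mathcal I$.

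For (i), fix $(\alpha(\xi), \beta(\xi)) \in \mathcal W(\Phi)$ and set $V := \mathcal R(\alpha(\xi), \beta(\xi))$. By Theorem~\ref{cftstw6u}(iii), the block entries of $V$ are
\[ V_{ik} = (k)_{k-i+1}\,\beta_{k-i+1}\odot\mathbf 1_{i-1} + (k)_{k-i}\,\alpha_{k-i}\odot\mathbf 1_i \]
for $0\le i<k$. On the other side, formula \eqref{vcrtew654uw} shows that $\alpha_j\nabla^j$ has a single nonzero block entry, located at position $(k-j,k)$ and equal to $(k)_j(\mathbf 1_{k-j}\odot\alpha_j)$; formula \eqref{cfsterewq} (see also \eqref{jiy8i}) shows that $M(\beta_j\nabla^j)$ has a single nonzero block entry, located at position $(k-j+1,k)$ and equal to $(k)_j(\mathbf 1_{k-j}\odot\beta_j)$. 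Summing the series $\alpha(\nabla) + M(\beta(\nabla))$ and reindexing (with $j = k-i$ in the $\alpha$-part, $j = k-i+1$ in the $\beta$-part) reproduces $V_{ik}$, after using the commutativity of the symmetric tensor product $\mathbf 1\odot\alpha = \alpha\odot\mathbf 1$ and $\mathbf 1\odot\beta = \beta\odot\mathbf 1$. This identity on block entries, combined with Remark~\ref{cfsthewdd}, identifies the two operators.

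For (ii), the first assertion (that the bracket on $\mathfrak s(\Phi)$ is the commutator) is immediate: by Theorem~\ref{cftstw6u}(i), $\mathbb S(\Phi)$ is an embedded Lie subgroup of $\mathbb M(\Phi)$, so $\mathfrak s(\Phi)$ is a Lie subalgebra of $\mathbb V(\Phi)$, whose bracket is the commutator by Proposition~\ref{fgeraq}. For the explicit formula \eqref{bvgfyd}, I observe that $\mathcal R = \log\circ\mathcal I\circ\operatorname{EXP}$, where $\operatorname{EXP}$ and $\log$ are the exponential maps of $\mathcal S(\Phi)$ and $\mathbb M(\Phi)$ (Propositions~\ref{cydsawa} and~\ref{gfstew5}), each providing an equivalent global parametrization, while $\mathcal I$ is an analytic Lie group isomorphism (Proposition~\ref{vgfdstt}(ii) and Theorem~\ref{cftstw6u}(ii)). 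In the Milnor framework recalled in Appendix~B, an analytic Lie group isomorphism between regular Lie groups induces a Lie algebra isomorphism, and under exponential global parametrizations this isomorphism is literally the composition $\log\circ\mathcal I\circ\operatorname{EXP} = \mathcal R$. Hence $\mathcal R$ carries the bracket on $\mathcal W(\Phi)$ from Proposition~\ref{cydsawa} (given by \eqref{cxtewu65}--\eqref{cfre6u4e}) onto the commutator bracket on $\mathfrak s(\Phi)$. Applying part (i) to rewrite the image under $\mathcal R$ of $[(\alpha^{(1)},\beta^{(1)}),(\alpha^{(2)},\beta^{(2)})]$ then yields exactly the right-hand side of \eqref{bvgfyd}.

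The main obstacle is conceptual rather than computational: making sure that $\mathcal R$ really is a Lie algebra isomorphism in the infinite-dimensional Milnor setting. This rests on the equivalence of the exponential global parametrizations together with the regularity of $\mathcal S(\Phi)$ and $\mathbb M(\Phi)$, both established earlier in Sections~\ref{vgydi67r} and~\ref{tdr6e65e48}. Once this transport of brackets is granted, (i) is a reindexing exercise and (ii) reduces to linearity together with a second invocation of (i); in particular, no direct manipulation of the Weyl commutation relations \eqref{vtdrsesw3} is required, although they furnish an alternative but lengthier route to (ii).
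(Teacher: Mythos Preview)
Your proposal is correct and follows essentially the same route as the paper. For (i) you match block entries of $V$ against those of $\alpha(\nabla)+M(\beta(\nabla))$, while the paper phrases the same computation as evaluating both operators on $\langle\cdot^{\otimes k},\xi^{\otimes k}\rangle$; for (ii) both you and the paper deduce \eqref{bvgfyd} by transporting the bracket on $\mathcal W(\Phi)$ through $\mathcal R$, the only minor remark being that regularity is not actually needed here---the transport rests solely on $\mathcal I$ being an analytic group isomorphism (Theorem~\ref{cftstw6u}(ii)) together with the equivalence of the exponential parametrizations.
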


\begin{remark}One can think of commutation relations \eqref{bvgfyd} as a non-trivial consequence of the commutation relations in the Weyl algebra, see \eqref{vtdrsesw3}. 
\end{remark}

\begin{proof} Let $\alpha(\xi)=\sum_{k=1}^\infty\alpha_k\xi^{\otimes k}\in\mathcal W_0(\Phi)$ and $\beta(\xi)=\sum_{k=2}^\infty\beta_k\xi^{\otimes k}\in\mathcal W_1(\Phi)$. Let $V=[V_{ik}]_{i, k\in\mathbb N_0}:=\mathcal R (\alpha(\xi), \beta(\xi))$. Recall that each $V_{ik}$ ($0\le i<k$) is given by~\eqref{vcyrte6ue}. Formulas \eqref{vcrtew654uw} and \eqref{jiy8i} imply, for $\xi\in\Phi$ and $k
\in\mathbb N$, 
\begin{align}
\big(V\langle \cdot^{\otimes k}, \xi^{\otimes k}\rangle\big)(\omega)&=\sum_{i=0}^{k-1}\langle\omega^{\otimes i}, V_{ik}\xi^{\otimes k}\rangle\notag\\
&=\quad\sum_{i=1}^{k-1} (k)_{k-i+1}\, \langle\omega, \beta_{k-i+1}\xi^{\otimes(k-i+1)}\rangle \langle\omega^{\otimes (i-1)}, \xi^{\otimes (i-1)}\rangle \notag\\
&\quad
+\sum_{i=0}^{k-1}(k)_{k-i}\langle\alpha_{k-i}, \xi^{\otimes(k-i)}\rangle\langle\omega^{\otimes i}, \xi^{\otimes i}\rangle\notag\\
&=\sum_{l=2}^{k} (k)_{l}\, \langle\omega, \beta_{l}\xi^{\otimes l}\rangle \langle\omega^{\otimes (k-l)}, \xi^{\otimes (k-l)}\rangle%\notag\\
%&\quad
+\sum_{l=1}^{k}(k)_{l}\langle\alpha_{l}, \xi^{\otimes l}\rangle\langle\omega^{\otimes (k-l)}, \xi^{\otimes (k-l)}\rangle\notag\\
&=\big((M(\beta(\nabla))+\alpha(\nabla))\langle \cdot^{\otimes k}, \xi^{\otimes k}\rangle\big)(\omega).\notag
\end{align}
This proves formula \eqref{xtsw5y4w35}.

That the Lie bracket on $\mathfrak s(\Phi)$ is the commutator of the operators follows from Proposition~\ref{gfstew5}. In view of Proposition~\ref{vgfdstt} (ii) and Theorem~\ref{cftstw6u} (ii), 
we have, for any $(\alpha^{(m)}(\xi), \beta^{(m)}(\xi))\in\mathcal W(\Phi)$ ($m=1, 2$), 
\begin{align}
&\big[\mathcal R(\alpha^{(1)}(\xi), \beta^{(1)}(\xi)), \mathcal R(\alpha^{(2)}(\xi), \beta^{(2)}(\xi))\big]\notag\\
&\quad=\mathcal R\big([(\alpha^{(1)}(\xi), \beta^{(1)}(\xi)), (\alpha^{(2)}(\xi), \beta^{(2)}(\xi)) ]\big).
\label{cftsreara}\end{align}
Now formula~\eqref{bvgfyd} follows from Proposition~\ref{cydsawa} and formulas~\eqref{xtsw5y4w35} and \eqref{cftsreara}.
 \end{proof}
 
 The following corollary is immediate.
 
\begin{corollary}
A $P\in\mathbb M(\Phi)$ is a Sheffer operator if and only if there exist $\alpha(\xi)\in\mathcal W_0(\Phi)$ and $\beta(\xi)\in\mathcal W_1(\Phi)$ such that
$$P=\exp\big[\alpha(\nabla)+M(\beta(\nabla))\big]. $$
\end{corollary}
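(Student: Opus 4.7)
The plan is to deduce the corollary directly from Theorem~\ref{cftstw6u} and Theorem~\ref{dsw56w64w3}, essentially as a repackaging. The Lie algebra $\mathfrak s(\Phi)$ was defined as $\log(\mathbb S(\Phi))$, so by Proposition~\ref{gfstew5} the exponential map $\exp:\mathfrak s(\Phi)\to\mathbb S(\Phi)$ is a bijection. Separately, Theorem~\ref{cftstw6u}(iii) provides a bijection $\mathcal R:\mathcal W(\Phi)\to\mathfrak s(\Phi)$, and Theorem~\ref{dsw56w64w3}(i) identifies $\mathcal R(\alpha(\xi),\beta(\xi))=\alpha(\nabla)+M(\beta(\nabla))$.

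For the forward direction, I would take $P\in\mathbb S(\Phi)$ and set $V:=\log(P)\in\mathfrak s(\Phi)$. Applying surjectivity of $\mathcal R$, I would pick $(\alpha(\xi),\beta(\xi))\in\mathcal W(\Phi)$ with $\mathcal R(\alpha(\xi),\beta(\xi))=V$, so that formula~\eqref{xtsw5y4w35} gives $V=\alpha(\nabla)+M(\beta(\nabla))$; then $P=\exp(V)=\exp[\alpha(\nabla)+M(\beta(\nabla))]$.

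For the reverse direction, given any $(\alpha(\xi),\beta(\xi))\in\mathcal W(\Phi)$, formula~\eqref{xtsw5y4w35} says $\alpha(\nabla)+M(\beta(\nabla))=\mathcal R(\alpha(\xi),\beta(\xi))\in\mathfrak s(\Phi)$. Since $\exp$ maps $\mathfrak s(\Phi)$ onto $\mathbb S(\Phi)$, we conclude $\exp[\alpha(\nabla)+M(\beta(\nabla))]\in\mathbb S(\Phi)$.

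There is no real obstacle here: the two key ingredients (that $\exp$ bijects $\mathfrak s(\Phi)$ onto $\mathbb S(\Phi)$, and that the image of $\mathcal R$ is spanned by operators of the form $\alpha(\nabla)+M(\beta(\nabla))$) have already been established. The only thing to check is that the two steps compose correctly, and this is immediate from the definitions.
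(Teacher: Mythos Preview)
Your proposal is correct and is exactly the argument the paper has in mind: the corollary is stated immediately after Theorem~\ref{dsw56w64w3} with the remark ``The following corollary is now obvious,'' and the obviousness is precisely the composition you spell out---$\exp$ bijects $\mathfrak s(\Phi)$ onto $\mathbb S(\Phi)$ by Theorem~\ref{cftstw6u}(i) and Proposition~\ref{gfstew5}, while $\mathcal R$ bijects $\mathcal W(\Phi)$ onto $\mathfrak s(\Phi)$ with $\mathcal R(\alpha(\xi),\beta(\xi))=\alpha(\nabla)+M(\beta(\nabla))$ by Theorem~\ref{cftstw6u}(iii) and formula~\eqref{xtsw5y4w35}. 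One tiny wording quibble: the image of $\mathcal R$ is not merely \emph{spanned by} operators of the form $\alpha(\nabla)+M(\beta(\nabla))$ but consists exactly of them, which is what you actually use.
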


 \begin{corollary}The following Campbell--Baker--Hausdorff formula holds for all $V^{(1)}, V^{(2)}\in\mathbb V(\Phi)$, in particular, for all 
 $V^{(1)}, V^{(2)}\in\mathfrak s(\Phi)$:
 \begin{align}
 \exp(V^{(1)})\exp(V^{(2)})&=\exp\bigg(V^{(1)}+V^{(2)}+\frac12\, [V^{(1)}, V^{(2)}]\notag\\
 &\quad+\frac1{12}\big([V^{(1)}, [V^{(1)}, V^{(2)}]]-[V^{(2)}, [V^{(1)}, V^{(2)}]]\big)+\cdots\bigg).\label{vcxeET}
 \end{align}
 \end{corollary}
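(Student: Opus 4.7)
The plan is to reduce the Baker--Campbell--Hausdorff identity \eqref{vcxeET} to its classical formal version in the algebra of non-commutative formal power series in two indeterminates, exploiting the locally nilpotent action of $\mathbb V(\Phi)$ on $\mathcal P(\Phi')$ recorded in formula \eqref{vfyte6i}. The first step is a sharpening of \eqref{vfyte6i}: for any $V^{(i_1)},\dots,V^{(i_n)}\in\mathbb V(\Phi)$, the product $W:=V^{(i_1)}\cdots V^{(i_n)}$ satisfies $W\bigl(\mathcal P^{(j)}(\Phi')\bigr)\subset\mathcal P^{(j-n)}(\Phi')$ (with $\mathcal P^{(m)}(\Phi'):=\{0\}$ for $m<0$), since each factor strictly decreases the polynomial degree by \eqref{cxzraw4t}. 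In the block representation, this means $W_{ik}=0$ whenever $n>k-i$, and the same is true of any iterated commutator of length $n$ in $V^{(1)},V^{(2)}$, because such a commutator expands into a sum of words of length exactly $n$.

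Let $Z_n(X,Y)$ denote the degree-$n$ homogeneous component of the formal BCH series in the free Lie algebra on two generators, given by Dynkin's formula as a linear combination of nested brackets of length $n$. The right-hand side of \eqref{vcxeET} is to be interpreted as $\exp\bigl(\sum_{n\ge1}Z_n(V^{(1)},V^{(2)})\bigr)$. By the previous step, the $(i,k)$-block of $Z_n(V^{(1)},V^{(2)})$ vanishes once $n>k-i$, so the blockwise partial sums are eventually constant, forcing convergence of the series in the product topology of $\mathbb V(\Phi)$. To verify the identity itself, I would fix an arbitrary polynomial $p\in\mathcal P(\Phi')$ of degree at most $N$: both $\bigl(\exp(V^{(1)})\exp(V^{(2)})\bigr)p$ and $\exp\bigl(\sum_{n}Z_n(V^{(1)},V^{(2)})\bigr)p$ are finite linear combinations of terms $Wp$ with $W$ a word in $V^{(1)},V^{(2)}$ of length at most $N$, all longer words annihilating $p$. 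Hence both sides are obtained by applying the evaluation homomorphism $X\mapsto V^{(1)}$, $Y\mapsto V^{(2)}$ to corresponding elements of the quotient of $\mathbb C\langle X,Y\rangle$ by the two-sided ideal generated by monomials of degree $>N$; in this nilpotent quotient the classical formal BCH identity is a finite identity of polynomials, and so it transfers through the evaluation. Letting $p$ range over $\mathcal P(\Phi')$ yields the desired equality in $\mathbb M(\Phi)$.

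For the specialization to $V^{(1)},V^{(2)}\in\mathfrak s(\Phi)$, Theorem~\ref{cftstw6u}~(i) identifies $\mathfrak s(\Phi)$ with the Lie algebra of the Lie subgroup $\mathbb S(\Phi)$, making it a closed Lie subalgebra of $\mathbb V(\Phi)$; consequently every $Z_n(V^{(1)},V^{(2)})$ and hence their sum lies in $\mathfrak s(\Phi)$, and $\exp$ maps it back into $\mathbb S(\Phi)$. The main delicate point is not any individual algebraic step but the bookkeeping required to pass from the formal identity in $\mathbb C\langle\langle X,Y\rangle\rangle$ to the assertion in the specific locally convex topology of $\mathbb V(\Phi)$; this is resolved cleanly by the observation that this topology is literally the product of the topologies on the blocks $\mathcal L(\Phi^{\hat\odot k},\Phi^{\hat\odot i})$, so blockwise stabilization of the partial sums is precisely convergence.
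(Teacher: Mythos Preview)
Your argument is correct and follows essentially the same route as the paper's: both rest on the locally nilpotent action recorded in \eqref{cxzraw4t}/\eqref{vfyte6i} to guarantee that the BCH series converges blockwise in $\mathbb V(\Phi)$ and that the identity reduces to the classical formal one. The paper's proof simply cites \cite[p.~1013]{Milnor} for the identity and points to \eqref{cxzraw4t} for convergence, whereas you unpack that citation into a self-contained argument via the evaluation homomorphism from the truncated free algebra; the content is the same.
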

 
 \begin{proof}
 The statement follows from \cite[p.~1013]{Milnor}. Note that in view of formula \eqref{cxzraw4t}, the infinite series in formula \eqref{vcxeET} obviously converges in $\mathbb V(\Phi)$. 
 \end{proof}
 
 \begin{corollary} Let $\Phi=C_\mathrm c(X;\mathbb C)$ be as in Subsection~\ref{vcts5ywiyu}. Then an operator $Q \in \mathcal L(\mathcal P(\Phi')$ belongs to $\mathfrak s(\Phi)$ if and only if $Q$ is as in formula~\eqref{vftyre6i4bhft}. 
 \end{corollary}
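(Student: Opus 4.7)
The plan is to invoke Theorem~\ref{dsw56w64w3}(i) to reduce the statement to an explicit identification of the abstract expression $\alpha(\nabla)+M(\beta(\nabla))$ with the integral formula \eqref{vftyre6i4bhft} in the concrete setting $\Phi=C_\mathrm c(X;\mathbb C)$. By Theorem~\ref{dsw56w64w3}(i) the map $\mathcal R:\mathcal W(\Phi)\to\mathfrak s(\Phi)$ is a bijection and $\mathcal R(\alpha(\xi),\beta(\xi))=\alpha(\nabla)+M(\beta(\nabla))$, so $Q\in\mathfrak s(\Phi)$ if and only if $Q=\alpha(\nabla)+M(\beta(\nabla))$ for some $\alpha(\xi)=\sum_{k\ge1}\alpha_k\xi^{\otimes k}\in\mathcal W_0(\Phi)$ and $\beta(\xi)=\sum_{k\ge2}\beta_k\xi^{\otimes k}\in\mathcal W_1(\Phi)$. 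It therefore suffices to show that, in the present example, each term $\alpha_k\nabla^k$ is the $k$th integral in the first sum of \eqref{vftyre6i4bhft} and each term $M(\beta_k\nabla^k)$ is the $k$th integral in the second sum.

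First I would make the standard identifications described in Subsection~\ref{vcts5ywiyu}: since $\Phi^{\hat\odot k}$ consists of the symmetric elements of $C_\mathrm c(X^k;\mathbb C)$, each $\alpha_k\in(\Phi^{\hat\odot k})'$ is represented by a symmetric $\mathbb C$-valued Radon measure $\alpha_k(dx_1\cdots dx_k)$ on $X^k$ via $\alpha_k f^{(k)}=\int_{X^k}f^{(k)}\,d\alpha_k$, and each $\beta_k\in\mathcal L(\Phi^{\hat\odot k},\Phi)$ is represented by a kernel $\beta_k(x,dx_1\cdots dx_k)$ via $(\beta_kf^{(k)})(x)=\int_{X^k}f^{(k)}(x_1,\dots,x_k)\,\beta_k(x,dx_1\cdots dx_k)$; symmetry in the last $k$ arguments may be imposed without loss of generality.

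Next I would compute the iterated G\^ateaux derivative. From \eqref{dszeaewaeu}, for $p=\langle\cdot^{\otimes n},f^{(n)}\rangle$ one has
\begin{equation*}
(\nabla^k p)(\omega)=(n)_k\,(\omega^{\otimes(n-k)}\odot\mathbf 1_k)f^{(n)},
\end{equation*}
which, pairing with $\delta_{x_1}\odot\cdots\odot\delta_{x_k}$, yields $D_{x_1}\cdots D_{x_k}p(\omega)=(\nabla^k p)(\omega)(x_1,\dots,x_k)$, viewed as a function in $\Phi^{\hat\odot k}$ evaluated pointwise. Substituting into \eqref{vcrtew654uw} and \eqref{cfsterewq} then gives
\begin{align*}
(\alpha_k\nabla^k p)(\omega)&=\int_{X^k}(D_{x_1}\cdots D_{x_k}p)(\omega)\,\alpha_k(dx_1\cdots dx_k),\\
(M(\beta_k\nabla^k)p)(\omega)&=\int_X\int_{X^k}(D_{x_1}\cdots D_{x_k}p)(\omega)\,\beta_k(x,dx_1\cdots dx_k)\,\omega(dx),
\end{align*}
the first being a direct rewriting of the pairing $\alpha_k((\nabla^kp)(\omega))$ and the second coming from \eqref{vcydtyrdst6} together with the kernel representation of $\beta_k$ and the identity $\langle\omega,\cdot\rangle=\int_X\cdot\,d\omega$. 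Summing over $k$ produces exactly \eqref{vftyre6i4bhft}. Conversely, given any operator of the form \eqref{vftyre6i4bhft}, the Radon measures $\alpha_k$ and kernels $\beta_k$ determine corresponding functionals in $(\Phi^{\hat\odot k})'$ and operators in $\mathcal L(\Phi^{\hat\odot k},\Phi)$, and hence an element of $\mathcal W(\Phi)$ whose image under $\mathcal R$ coincides with $Q$ on each monomial, and therefore on all of $\mathcal P(\Phi')$ by Remark~\ref{ghcydydsszza}.

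The only genuine point to verify is that the identifications of $(\Phi^{\hat\odot k})'$ with symmetric Radon measures on $X^k$, and of $\mathcal L(\Phi^{\hat\odot k},\Phi)$ with integral kernels, are both well defined and bijective; the first follows from $\Phi^{\hat\odot k}\hookrightarrow C_\mathrm c(X^k;\mathbb C)$ together with the duality $\Phi'=M(X;\mathbb C)$ reviewed in Subsection~\ref{vcts5ywiyu}, and the second from Lemma~\ref{vcyre6i} applied to each Banach step $\Phi_n^{\hat\odot k}\to\Phi$, reducing to the analogous kernel theorem on compact sets. Everything else is bookkeeping. Modulo these identifications the proof is essentially a rewriting, and no new analytic difficulty arises beyond what is already in Theorem~\ref{dsw56w64w3}.
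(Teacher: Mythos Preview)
Your proposal is correct and follows essentially the same approach as the paper: invoke Theorem~\ref{dsw56w64w3}(i), identify $(\nabla^k p)(\omega)$ with the function $(x_1,\dots,x_k)\mapsto (D_{x_1}\cdots D_{x_k}p)(\omega)$ via \eqref{dszeaewaeu}, and then read off the integral formulas for $\alpha_k\nabla^k$ and $M(\beta_k\nabla^k)$. The only cosmetic difference is in the kernel representation of $\beta_k$: rather than appealing to Lemma~\ref{vcyre6i} and a kernel theorem on compacta, the paper simply observes that for each fixed $x\in X$ the map $f^{(k)}\mapsto(\beta_k f^{(k)})(x)$ lies in $(\Phi^{\hat\odot k})'=M_{\mathrm{sym}}(X^k;\mathbb C)$, which immediately gives the kernel $\beta_k(x,dx_1\cdots dx_k)$ without further work.
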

 
 \begin{proof} By formula \eqref{dszeaewaeu}, we have, for $f^{(n)}\in\Phi^{\hat\odot n}$, 
\begin{align}
\big( \nabla^k\langle\cdot^{\otimes n}, f^{(n)}\rangle\big)(\omega, x_1, \dots, x_k)&=(n)_k\int_{X^{n-k}}f^{(n)}(x_1, \dots, x_k, \cdot)\, d\omega^{\otimes(n-k)}\notag\\
&=\big(D_{x_1}\dotsm D_{x_k}\langle\cdot^{\otimes n}, f^{(n)}\rangle\big)(\omega).\label{vcyrtds5y}
\end{align} 

Note also that, for $k\ge2$, $(\Phi^{\hat\odot k})'$, can be identified with the subset $M_{\mathrm{sym}}(X^k;\mathbb C)$ of $M(X^k;\mathbb C)$ that consists of $\mathbb C$-valued Radon measures on $X^k$ that remain invariant under the natural action of the symmetric group $S_k$ onto $X^k$. Hence, by \eqref{vcyrtds5y}, for $k\in\mathbb N$, $\alpha_k\in(\Phi^{\hat\odot k})'$ and $p\in\mathcal P(\Phi')$, we have
\begin{equation}\label{vgdsta}
(\alpha_k\nabla p)(\omega)=\int_{X^k}\big(D_{x_1}\dotsm D_{x_k}p\big)(\omega)\, \alpha_k(dx_1\dotsm dx_k).
\end{equation}
Next, for each $k\ge2$, $\beta_k\in\mathcal L(\Phi^{\hat\odot k}, \Phi)$ and $x\in X$, the map $\Phi^{\hat\odot k}\ni f^{(k)}\mapsto(\beta_k f^{(k)})(x)$ belongs to $(\Phi^{\hat\odot k})'=M_{\mathrm{sym}}(X^k;\mathbb C)$. Therefore, by \eqref{vcydtyrdst6} and \eqref{vcyrtds5y}, we have, for $p\in\mathcal P(\Phi')$, 
\begin{equation}\label{cxtsw6u}
\big(M(\beta_k\nabla^k) p\big)(\omega)=\int_X\int_{X^k} (D_{x_1}\dotsm D_{x_k}\, p)(\omega)\, \beta_k(x, dx_1\dotsm dx_k)\, \omega(dx).
\end{equation}
Now the statement follows from Theorem~\ref{dsw56w64w3} (i) and formulas \eqref{vgdsta} and \eqref{cxtsw6u}.
 \end{proof}

\subsection{Remarks on the Riordan group}\label{vgdfr6e64u}

Consider the following bijective (analytic) transformation: 
\begin{equation*}
\mathbb M(\Phi)\ni P=[P_{ik}]_{i, k\in\mathbb N_0}\mapsto \mathfrak IP=\big[(k!/i!)P_{ik}\big]_{i, k\in\mathbb N_0}\in\mathbb M(\Phi).\end{equation*}
Denote by $\mathfrak R(\Phi) $ the image of the set of Sheffer operators, $\mathbb S(\Phi)$, under this transformation.
It easily follows from \eqref{vcrses} and \eqref{fgxtrsr} that a $P=[P_{ik}]_{i, k\in\mathbb N_0}\in\mathbb M(\Phi)$ belongs to $\mathfrak R(\Phi)$ if and only if its usual generating function
is of the form
\begin{equation}\label{vctrdst5sw}
\sum_{k=0}^\infty P^{(k)}(\omega, \xi^{\otimes k})=\frac{A(\xi)}{1-\langle\omega, B(\xi)\rangle}=A(\xi)\sum_{k=0}^\infty \langle\omega, B(\xi)\rangle^k, 
\end{equation}
where $(A(\xi), B(\xi))\in\mathcal S(\Phi)$, compare with \cite{ansh}. By Proposition~\ref{vhydye}, a $P=[P_{ik}]_{i, k\in\mathbb N_0}\in\mathbb M(\Phi)$ belongs to $\mathfrak R(\Phi)$ if and only if, for each $i\in\mathbb N_0$, we have
\begin{equation}\label{cfxsdtserse}
\sum_{k=i}^\infty P_{ik}\xi^{\otimes k}= B(\xi)^{\odot i}A(\xi).\end{equation}

By a trivial modification of the proof of Proposition~\ref{vgfdstt}, one can show that $\mathfrak R(\Phi)$ is a subgroup of $\mathbb M(\Phi)$, and furthermore, the restriction of the map $\mathfrak I$ to $\mathbb S(\Phi)$ (for which we will preserve the notation $\mathfrak I$) establishes a group isomorphism between $\mathbb S(\Phi)$ and $\mathfrak R(\Phi)$.
Let us call $\mathfrak R(\Phi)$ the {\it Riordan group on $\Phi'$}. Indeed, in the case $\Phi=\mathbb C$, condition \eqref{cfxsdtserse} ensures that $\mathfrak R(\mathbb C)$ is the Riordan group\footnote{In fact, the Riordan group, as defined in \cite{Shapiro}, consists of lower-diagonal matrices, hence it coincides with the image of $\mathfrak R(\mathbb C)$ under the transformation $P\mapsto P^T$, where $P^T$ is the transpose of $P$.} defined by Shapiro et al.\ \cite{Shapiro}.  See also \cite[Section~7.3]{ShapiroBook} for the case of a finite-dimensional space $\Phi$.
Furthermore, by analogy with the one-dimensional case, we will call $\mathfrak T(\Phi):=\mathfrak I\big(\mathbb A(\Phi)\big)$ the {\it Toeplitz group} and $\mathfrak L(\Phi):=\mathfrak I\big(\mathbb B(\Phi)\big)$ the {\it Lagrange group on $\Phi'$}. By Proposition~\ref{vgfdstt}~(i), we have $\mathfrak R(\Phi)=\mathfrak T(\Phi)\rtimes\mathfrak L(\Phi)$.

One can easily modify the proof of Theorem~\ref{cftstw6u} to show that a counterpart of this 
theorem holds for the groups $\mathfrak R(\Phi)$, $\mathfrak T(\Phi)$ and $\mathfrak L(\Phi)$. In particular, 
$$\mathfrak r(\Phi):=\log\big(\mathfrak R(\Phi)\big), \quad \mathfrak t(\Phi):=\log\big(\mathfrak T(\Phi)\big), \quad \mathfrak l(\Phi):=\log\big(\mathfrak L(\Phi)\big)$$
are closed vector subspaces of $\mathbb V(\Phi)$, and a counterpart of formula \eqref{vcyrte6ue} takes the form 
\begin{equation}\label{vgxzfxf}
V_{ik}=i(\beta_{k-i+1}\odot\mathbf 1_{i-1})+(\alpha_{k-i}\odot\mathbf1_i), \quad 0\le i<k.
\end{equation}

Next, similarly to \eqref{cxerw5yu}, we define the {\it zero-gradient\,\footnote{This gradient is associated to the $q$-differentiation with $q=0$, see e.g.\ \cite[Section~1]{Kac}.}} $\nabla_{0}$ that satisfies $\nabla_{0}1=0$ and
 $$\big(\nabla_{0}\langle\cdot^{\otimes n}, f^{(n)}\rangle\big)(\omega)=(\omega^{\otimes(n-1)}\odot\mathbf 1_1)f^{(n)}, \quad n\in\mathbb N.$$
 Hence, similarly to \eqref{dszeaewaeu}, we obtain
 \begin{equation*}
 \big(\nabla_{0}^k\langle\cdot^{\otimes n}, f^{(n)}\rangle\big)(\omega)=\chi_{n\ge k}(\omega^{\otimes(n-k)}\odot\mathbf 1_k)f^{(n)}, 
 \end{equation*}
 where $\chi_{n\ge k}$ is equal to 1 if $n\ge k$, and to 0 otherwise.
Then, similarly to Subsection~\ref{xra5aq354q}, we define, for $\alpha_k\in(\Phi^{\hat\odot k})'$ and $\beta_k\in\mathcal L(\Phi^{\hat\odot k}, \Phi)$, linear operators $\alpha_k\nabla_{0}^k$ and $M(\beta_k\nabla_{0}^k)$ from $\mathbb V(\Phi)$:
\begin{align}
\big(\alpha_k\nabla_0^k\langle\cdot^{\otimes n}, \xi^{\otimes n}\rangle\big)(\omega)&=\chi_{n\ge k}\langle\omega^{\otimes(n-k)}, \xi^{\otimes(n-k)}\odot(\alpha_k\xi^{\otimes k})\rangle, \label{hstd}\\
\big(M(\beta_k\nabla_0^k) \langle\cdot^{\otimes n}, \xi^{\otimes n}\rangle\big)(\omega)&=\chi_{n\ge k}
\langle \omega^{\otimes(n-k+1)}, \xi^{\otimes(n-k)}\odot(\beta_k\xi^{\otimes k})\rangle.
\end{align}
 We similarly define operators $\alpha(\nabla_{0})$ and $M(\beta(\nabla_{0}))$ for $\alpha(\xi)=\sum_{k=1}^\infty \alpha_k\xi^{\otimes k}\in\mathcal W_0(\Phi)$ and $\beta(\xi)=\sum_{k=2}^\infty \beta_k\xi^{\otimes k}\in\mathcal W_1(\Phi)$.

Next, using \eqref{jiy8i} with $k=1$ and $\beta_1=\mathbf 1_1$, we obtain, for the usual gradient $\nabla$, the {\it number operator} $N:=M(\nabla)$:
\begin{equation}\label{cxds5y6e5s}
\big(N\langle\cdot^{\otimes n}, f^{(n)}\rangle
\big)(\omega)=n \langle\omega^{\otimes n}, f^{(n)}\rangle, \quad f^{(n)}\in\Phi^{\hat\odot n}, \ n\in\mathbb N_0.
\end{equation}

By using formulas \eqref{vgxzfxf}--\eqref{cxds5y6e5s}, one can now easily derive the following counterpart of Theorem~\ref{dsw56w64w3}.

\begin{theorem}\label{xretsrea45}
Define a map 
$\widetilde{\mathcal I}:\mathcal S(\Phi)\to \mathfrak R(\Phi)$ by $\widetilde{\mathcal I}(A(\xi), B(\xi))=P$, where $P\in\mathfrak R(\Phi)$ has generating function \eqref{vctrdst5sw}.
Define a map $\widetilde{\mathcal R}:\mathcal W(\Phi)\to\mathfrak r(\Phi)$ by $\widetilde{\mathcal R}:=\log(\widetilde{\mathcal I}\operatorname{EXP})$. Then, for $(\alpha(\xi), \beta(\xi))\in\mathcal W(\Phi)$, 
\begin{equation*}
\widetilde{\mathcal R}(\alpha(\xi), \beta(\xi))=\alpha(\nabla_{0})+NM(\beta(\nabla_{0})).
\end{equation*}
Furthermore, for any $(\alpha^{(m)}(\xi), \beta^{(m)}(\xi))\in\mathcal W(\Phi)$ ($m=1, 2$), we have
\begin{align}
&\big[\alpha^{(1)}(\nabla_0)+NM(\beta^{(1)}(\nabla_0)), \alpha^{(2)}(\nabla_0)+NM(\beta^{(2)}(\nabla_0)) \big]\notag\\
&\quad =D_{\beta^{(2)}}\alpha^{(1)}(\nabla_0)- D_{\beta^{(1)}}\alpha^{(2)}(\nabla_0)+NM\big(D_{\beta^{(2)}}\beta^{(1)}(\nabla_0) -D_{\beta^{(1)}}\beta^{(2)}(\nabla_0)\big).\notag
\end{align}
\end{theorem}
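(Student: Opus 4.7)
The plan is to prove Theorem~\ref{xretsrea45} in close parallel to Theorem~\ref{dsw56w64w3}, with the Riordan generating function \eqref{vctrdst5sw} and the matrix representation \eqref{cfxsdtserse} replacing the Sheffer versions~\eqref{vcrses} and \eqref{cydtyds}. The proof splits into two parts: first the explicit formula for $\widetilde{\mathcal R}(\alpha(\xi),\beta(\xi))$, then the commutation relation.

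For part~(i), I would start from the Riordan analog of Theorem~\ref{cftstw6u}~(iii) mentioned in the preceding discussion: if $V=[V_{ik}]_{i,k\in\mathbb N_0}:=\widetilde{\mathcal R}(\alpha(\xi),\beta(\xi))$, then $V$ has matrix entries given by \eqref{vgxzfxf}. This is derived exactly as in the Sheffer case: set $(A(t,\xi),B(t,\xi)):=\operatorname{EXP}(t(\alpha(\xi),\beta(\xi)))$, define $P(t):=\widetilde{\mathcal I}(A(t,\xi),B(t,\xi))$, and compute $V_{ik}=\frac{d}{dt}\big|_{t=0}P_{ik}(t)$ from the relation $\sum_{k=i}^\infty P_{ik}(t)\xi^{\otimes k}=B(t,\xi)^{\odot i}A(t,\xi)$ coming from \eqref{cfxsdtserse}, using the Leibniz-type identity $\frac{d}{dt}\bigl(B(t,\xi)^{\odot i}A(t,\xi)\bigr)=iB'(t,\xi)\odot B(t,\xi)^{\odot(i-1)}A(t,\xi)+B(t,\xi)^{\odot i}A'(t,\xi)$ evaluated at $t=0$. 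The factor $i!/k!$ present in the Sheffer version of \eqref{cydtyds} is absent here, which is precisely what turns $(k)_{k-i+1}$, $(k)_{k-i}$ in \eqref{vcyrte6ue} into $i,\,1$ in \eqref{vgxzfxf}.

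With \eqref{vgxzfxf} in hand, I would apply $V$ to the monomial $\langle\cdot^{\otimes k},\xi^{\otimes k}\rangle$ and decompose
\begin{align*}
(V\langle\cdot^{\otimes k},\xi^{\otimes k}\rangle)(\omega)
&=\sum_{i=1}^{k-1}i\,\langle\omega,\beta_{k-i+1}\xi^{\otimes(k-i+1)}\rangle\,\langle\omega^{\otimes(i-1)},\xi^{\otimes(i-1)}\rangle\\
&\quad+\sum_{i=0}^{k-1}\langle\alpha_{k-i},\xi^{\otimes(k-i)}\rangle\,\langle\omega^{\otimes i},\xi^{\otimes i}\rangle.
\end{align*}
After reindexing $l=k-i+1$ in the first sum and $l=k-i$ in the second, the first sum becomes $\sum_{l=2}^{k}(k-l+1)\langle\omega,\beta_l\xi^{\otimes l}\rangle\langle\omega^{\otimes(k-l)},\xi^{\otimes(k-l)}\rangle$, which matches $NM(\beta(\nabla_0))$ acting on the monomial thanks to \eqref{cxds5y6e5s} and the analog of \eqref{jiy8i} for $\nabla_0$; the second sum reproduces $\alpha(\nabla_0)$ via \eqref{hstd}. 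By Remark~\ref{ghcydydsszza}, this determines the operator, so formula~(i) follows.

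For part~(ii), the Lie bracket on $\mathfrak r(\Phi)$ is the operator commutator (by the Riordan counterpart of Proposition~\ref{gfstew5}, obtained by the same argument since $\mathfrak r(\Phi)\subset\mathbb V(\Phi)$). The map $\widetilde{\mathcal R}=\log(\widetilde{\mathcal I}\operatorname{EXP})$ is an isomorphism of Lie algebras: $\widetilde{\mathcal I}:\mathcal S(\Phi)\to\mathfrak R(\Phi)$ is a Lie group isomorphism (the analogs of Propositions~\ref{vgfdstt} and~\ref{cftstw6u}~(ii) apply verbatim with $\mathfrak R(\Phi)$ in place of $\mathbb S(\Phi)$), and $\operatorname{EXP}$ provides a global parametrization of $\mathcal S(\Phi)$ under which the Lie bracket on $\mathcal W(\Phi)$ is given by~\eqref{cxtewu65}--\eqref{cfre6u4e} (Proposition~\ref{cydsawa}). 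Hence
\[
[\widetilde{\mathcal R}(w_1),\widetilde{\mathcal R}(w_2)]=\widetilde{\mathcal R}([w_1,w_2])
\]
for $w_m=(\alpha^{(m)}(\xi),\beta^{(m)}(\xi))\in\mathcal W(\Phi)$, and substituting the formulas from~\eqref{cxtewu65}--\eqref{cfre6u4e} into~(i) yields the claimed commutation relation. The main conceptual obstacle, as in Theorem~\ref{dsw56w64w3}, is not the commutator computation itself but ensuring that the auxiliary $N$ factor behaves correctly: one must track that $M(\beta_k\nabla_0^k)$ raises the degree count registered by $N$ in exactly the manner needed for the Leibniz-type identities underlying~\eqref{cftsreara} to carry through; the index-bookkeeping in the first matching computation is where a slip is most likely.
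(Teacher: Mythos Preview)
Your proposal is correct and follows exactly the approach the paper indicates: the paper does not give a detailed proof of Theorem~\ref{xretsrea45} but simply states that it can be derived from formulas~\eqref{vgxzfxf}--\eqref{cxds5y6e5s} as a counterpart of Theorem~\ref{dsw56w64w3}, and your argument does precisely this---computing $V$ on monomials from the matrix entries~\eqref{vgxzfxf} and matching against $\alpha(\nabla_0)+NM(\beta(\nabla_0))$, then transporting the Lie bracket via the isomorphism $\widetilde{\mathcal R}$ just as in~\eqref{cftsreara}. Your index-bookkeeping (including the crucial factor $k-l+1$ coming from $N$ applied to a degree-$(k-l+1)$ monomial) is accurate.
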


\begin{remark}\label{rem:Riordan1dim}
In the case $\Phi=\mathbb C$, formula \eqref{vgxzfxf} provides (after taking the transposition) a full and faithful representation of the Lie algebra $\mathfrak r(\mathbb C)$, reproducing the result of \cite[Theorem~16]{Cheonatal} in the case of monic polynomial sequences. At the same time, Theorem~\ref{xretsrea45} provides improved understanding of  the Lie algebra $\mathfrak r(\mathbb C)$ and the Lie bracket on it. 
\end{remark}

\begin{remark}
It would be interesting to study a generalization of the Riordan group $\mathfrak R(\mathbb C)$ for which the corresponding polynomial sequences of several (or infinitely many) non-commuting variables are as discussed in \cite{ansh}. 
\end{remark}

\subsection*{Acknowledgments}

MJO was supported by the Portuguese national funds through FCT--Funda{\c c}\~ao para a Ci\^encia e a Tecnologia, I.P., within the project UID/04561/2025\\
 \href{https://doi.org/10.54499/UID/04561/2025}{https://doi.org/10.54499/UID/04561/2025}.

\setcounter{equation}{0}
\appendix
\renewcommand{\thesection}{A}
\section{Appendix: Some results on analyticity}

\setcounter{theorem}{0}

We recall the definition of an analytic function acting in locally convex topological vector spaces, \cite{Bochnak} and \cite[Section~3.1]{Dineen}, see also \cite[Section~3]{Milnor} and \cite[Section~2]{Gloeckner}.

Let $V$ and $W$ be l.c.s.'s over $\mathbb C$, and assume that $W$ is complete. A function $f:V\to W$ is called \emph{holomorphic} if it is continuous and, for any $z, v\in V$, the G\^ateaux derivative of $f$ at point $z$ in direction $v$ exists:
\begin{equation*}
f'(z;v)=D_{v}f(z)=\lim_{s\to0}\frac1s\big(f(z+sv)-f(z)\big). 
\end{equation*}
If $f:V\to W$ is holomorphic, then for any $z\in V$, there exists a unique sequence $(p^{(n)})_{n=1}^\infty$, with $p^{(n)}:V\to W$ being a homogeneous polynomial of degree $n$, such that 
$f(z+v)=f(z)+\sum_{n=1}^\infty \frac1{n!}\, p^{(n)}(v)$ for $v\in V$.
If additionally each homogeneous polynomial $p^{(n)}:V\to W$ is continuous, then the function $f$ is called {\it analytic}. 

Assume that a function $f:V\to W$ is holomorphic and the G\^ateaux derivative $V^2\ni(z, v)\mapsto f'(z;v)\in W$ is a continuous function. Then, by \cite[Assertion~3.10]{Milnor}, all higher G\^ateaux derivatives $V^{n+1}\ni(z;v_1, \dots, v_n)\mapsto f^{(n)}(z;v_1, \dots, v_n)\in W$ exist and are continuous functions. Hence, in this case, the homogeneous polynomials $p_n(v)=f^{(n)}(z;v, \dots, v)$ are continuous functions of $v$, and so $f$ is analytic. 

\begin{lemma}\label{tfy7e6e}
Let $(V_l)_{l=1}^\infty$ and $(W_k)_{k=1}^\infty$ be two sequences of complete complex l.c.s.'s. Let $V:=\prod_{l=1}^\infty V_l$ and $W:=\prod_{k=1}^\infty W_k$ be the topological products of the spaces $V_l$ and $W_k$, respectively. For $r\in\mathbb N$ 
and $1\le l_1<l_2<\dots<l_r$, denote by $\operatorname{Pr}^V_{l_1, \dots, l_r}$ the projection of $V$ onto $V_{l_1}\times V_{l_2}\times \dots\times V_{l_r}$, and for $k\in\mathbb N$, denote by $\operatorname{Pr}^W_k$ the projection of $W$ onto $W_k$. Let a map $\psi:V\to W$ be such that, for each $k\in\mathbb N$, there exist $r\in\mathbb N$, $1\le l_1<l_2<\dots<l_r$, and a map $\psi^{(k)}_{l_1, \dots, l_r}:V_{l_1}\times \dots\times V_{l_r}\to W_k$ such that
\begin{equation*}
\operatorname{Pr}^W_k\psi= \psi^{(k)}_{l_1, \dots, l_r}\operatorname{Pr}^V_{l_1, \dots, l_r}.
\end{equation*}
 Then the map $\psi$ is analytic if and only if, for each $k\in\mathbb N$, the map 
$\psi^{(k)}_{l_1, \dots, l_r}$
is analytic.
\end{lemma}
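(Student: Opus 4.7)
The plan is to reduce both directions to coordinate-wise statements via the universal property of the product topologies on $V$ and $W$, exploiting throughout that continuous linear maps between complete l.c.s.'s are analytic and that composition of analytic maps is analytic.

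For the forward implication, I would proceed as follows. Fix $k\in\mathbb N$ and the associated indices $l_1<\dots<l_r$. The projection $\operatorname{Pr}^W_k:W\to W_k$ is continuous linear, hence analytic, so $\operatorname{Pr}^W_k\psi:V\to W_k$ is analytic whenever $\psi$ is. On the other hand, the canonical embedding $\iota:V_{l_1}\times\dots\times V_{l_r}\hookrightarrow V$ given by padding the remaining coordinates with zero is continuous and linear, hence analytic, and satisfies $\operatorname{Pr}^V_{l_1,\dots,l_r}\circ\iota=\mathrm{id}$. Therefore
\[
\psi^{(k)}_{l_1,\dots,l_r}=\psi^{(k)}_{l_1,\dots,l_r}\circ\operatorname{Pr}^V_{l_1,\dots,l_r}\circ\iota=\operatorname{Pr}^W_k\psi\circ\iota
\]
is analytic.

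For the reverse implication, assume every $\psi^{(k)}_{l_1,\dots,l_r}$ is analytic and note that each map $\operatorname{Pr}^W_k\psi=\psi^{(k)}_{l_1,\dots,l_r}\circ\operatorname{Pr}^V_{l_1,\dots,l_r}$ is then analytic (again by composition). First I would verify that $\psi$ is continuous: a basic open neighborhood in $W$ has the form $\prod_k U_k$ with $U_k=W_k$ for all but finitely many $k$, and its preimage under $\psi$ is the finite intersection $\bigcap_{k\in K}(\operatorname{Pr}^W_k\psi)^{-1}(U_k)$, which is open. Next I would establish existence of the G\^ateaux derivative of $\psi$: since convergence in the product space $W$ is componentwise, the limit
\[
\lim_{s\to 0}\frac{1}{s}\big(\psi(z+sv)-\psi(z)\big)
\]
exists in $W$ as soon as the analogous limit exists in each $W_k$, and the latter is guaranteed by analyticity of $\psi^{(k)}_{l_1,\dots,l_r}$ (using $\operatorname{Pr}^W_k\psi(z+sv)=\psi^{(k)}_{l_1,\dots,l_r}(\operatorname{Pr}^V_{l_1,\dots,l_r}(z)+s\operatorname{Pr}^V_{l_1,\dots,l_r}(v))$). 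Thus $\psi$ is holomorphic in the sense recalled at the top of the appendix.

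To upgrade holomorphy to analyticity, I would construct the Taylor polynomials of $\psi$ componentwise. Fix $z\in V$. For each $k$, analyticity of $\psi^{(k)}_{l_1,\dots,l_r}$ yields an expansion
\[
\psi^{(k)}_{l_1,\dots,l_r}(\operatorname{Pr}^V_{l_1,\dots,l_r}(z)+w)=\psi^{(k)}_{l_1,\dots,l_r}(\operatorname{Pr}^V_{l_1,\dots,l_r}(z))+\sum_{n=1}^\infty\frac{1}{n!}\, q^{(n)}_k(w),
\]
with each $q^{(n)}_k$ a continuous homogeneous polynomial of degree $n$. I would then define $p^{(n)}:V\to W$ by the requirement $\operatorname{Pr}^W_kp^{(n)}(v):=q^{(n)}_k(\operatorname{Pr}^V_{l_1,\dots,l_r}(v))$. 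By polarization, each $q^{(n)}_k$ is the diagonal of a continuous symmetric $n$-linear map $\tilde q^{(n)}_k$; the universal property of the product topology on $W$ then produces a continuous symmetric $n$-linear map $\tilde p^{(n)}:V^n\to W$ whose projection onto $W_k$ is $\tilde q^{(n)}_k\circ(\operatorname{Pr}^V_{l_1,\dots,l_r})^n$, and $p^{(n)}$ is the diagonal of $\tilde p^{(n)}$; in particular $p^{(n)}$ is a continuous homogeneous polynomial of degree $n$. Componentwise convergence then gives $\psi(z+v)=\psi(z)+\sum_{n=1}^\infty\frac{1}{n!}p^{(n)}(v)$, so $\psi$ is analytic.

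The main obstacle is essentially bookkeeping: one must check that the coordinate-wise polynomial data $(q^{(n)}_k)_k$ genuinely assemble into a continuous homogeneous polynomial $p^{(n)}:V\to W$ in the sense of the paper, i.e., that symmetric $n$-linearity and continuity both lift from components to the product. This is handled by invoking polarization and the universal property of $W=\prod_k W_k$, and then uniqueness of the Taylor expansion of the holomorphic map $\psi$ identifies the $p^{(n)}$ constructed above with the Taylor polynomials of $\psi$.
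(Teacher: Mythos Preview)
Your proof is correct and rests on the same idea as the paper's, namely that analyticity into a product $W=\prod_k W_k$ can be checked componentwise because the product topology is controlled by finitely many coordinates at a time. The paper simply states this fact and stops, whereas you unpack it fully: you verify continuity via the basic open sets, obtain holomorphy from componentwise G\^ateaux differentiability, and then identify the Taylor polynomials of $\psi$ by projecting to each $W_k$ and invoking uniqueness. Your forward direction via the zero-padding section $\iota$ is a clean way to recover $\psi^{(k)}_{l_1,\dots,l_r}$ as a composite of analytic maps. In short, same route, more detail.
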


\begin{proof}
The lemma easily follows from the fact that, in a product space, say $W=\prod_{k=1}^\infty W_k$, any open set is a union of open sets of the form $\prod_{k=1}^\infty O_k$, where each $O_k$ is open in $W_k$ and only finitely many sets $O_k$ are not equal to $W_k$.
\end{proof}

Everywhere below $\Phi=\indlim_{n\to\infty}\Phi_n$ is an (LB)-space.

\begin{lemma}\label{gctessswe5} Let $i, j\in\mathbb N$ and $k\in\mathbb N_0$.

(i) The map 
\begin{equation}\label{cxdraqy}
\mathcal L(\Phi^{\hat\odot j}, \Phi^{\hat\odot k})\times \mathcal L(\Phi^{\hat\odot i}, \Phi^{\hat\odot j})\ni(A_1, A_2)\mapsto A_1A_2\in 
 \mathcal L(\Phi^{\hat\odot i}, \Phi^{\hat\odot k})
 \end{equation}
is analytic. 

(ii) The map 
\begin{align*}
&C([0, 1]; \mathcal L(\Phi^{\hat\odot j}, \Phi^{\hat\odot k}))\times C([0, 1];\mathcal L(\Phi^{\hat\odot i}, \Phi^{\hat\odot j}))\ni (A_1(\cdot), A_2(\cdot))\\
&\qquad \mapsto A_1(\cdot)A_2(\cdot)\in C([0, 1]; \mathcal L(\Phi^{\hat\odot i}, \Phi^{\hat\odot k}))
\end{align*}
is analytic. 
\end{lemma}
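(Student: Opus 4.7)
The decisive observation is that both maps in the statement are \emph{bilinear}. For a continuous bilinear map $B\colon V_1\times V_2\to W$ between complete complex l.c.s., the G\^ateaux derivative at $(z_1, z_2)$ in direction $(v_1, v_2)$ equals $B(v_1, z_2)+B(z_1, v_2)$, and is therefore a jointly continuous function of $(z_1, z_2, v_1, v_2)$ as soon as $B$ itself is continuous. By the criterion recalled just before Lemma~\ref{tfy7e6e} (continuous G\^ateaux derivative implies analyticity), $B$ is then analytic. Hence both parts of the lemma reduce to proving joint continuity of the respective composition maps.

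For~(i), the plan is to use the projective limit description
\begin{equation*}
\mathcal L(\Phi^{\hat\odot i}, \Phi^{\hat\odot k})=\prlim_{n\to\infty}\mathcal L(\Phi_n^{\hat\odot i}, \Phi^{\hat\odot k})
\end{equation*}
from Subsection~\ref{vctes56u7}, so as to reduce, for every fixed $n\in\mathbb N$, to continuity of
\begin{equation*}
(A_1, A_2)\longmapsto (A_1A_2)\restriction_{\Phi_n^{\hat\odot i}}\in \mathcal L(\Phi_n^{\hat\odot i}, \Phi^{\hat\odot k})=\indlim_{m'\to\infty}\mathcal L(\Phi_n^{\hat\odot i}, \Phi_{m'}^{\hat\odot k}).
\end{equation*}
Given a base point $(A_1^0, A_2^0)$, Lemma~\ref{vcyre6i} together with \eqref{utf7ir} furnishes $m, m'\in\mathbb N$ with $A_2^0\restriction_{\Phi_n^{\hat\odot i}}\in\mathcal L(\Phi_n^{\hat\odot i}, \Phi_m^{\hat\odot j})$ and $A_1^0\restriction_{\Phi_m^{\hat\odot j}}\in\mathcal L(\Phi_m^{\hat\odot j}, \Phi_{m'}^{\hat\odot k})$. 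On a suitable neighborhood of $(A_1^0, A_2^0)$ in the product topology, the plan is to factor the composition through this fixed Banach triple; the classical operator-norm inequality $\|A_1A_2\|\le\|A_1\|\,\|A_2\|$ then supplies joint continuity into the Banach space $\mathcal L(\Phi_n^{\hat\odot i}, \Phi_{m'}^{\hat\odot k})$, a fortiori into its (LB) inductive limit.

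For~(ii), the scheme is entirely parallel, additionally invoking formulas~\eqref{fxdste5swu5} and~\eqref{fstsw5u} of Lemma~\ref{vcrswu5}: compactness of $[0,1]$ together with the compact regularity of the (LB)-space lets us assume the two input curves take values in a single pair of Banach spaces, and the pointwise bound $\|A_1(t)A_2(t)\|\le\|A_1(t)\|\,\|A_2(t)\|$ integrates to a uniform estimate on $[0,1]$, yielding continuity of the composed curve in the sup topology of $C([0,1];\mathcal L(\Phi_n^{\hat\odot i}, \Phi_{m'}^{\hat\odot k}))$.

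The step I anticipate as the main obstacle is the ``local factoring'' in part~(i): although Lemma~\ref{vcyre6i} delivers an adequate Banach triple at the single point $(A_1^0, A_2^0)$, one must check that this factorization persists on an entire neighborhood of $(A_1^0, A_2^0)$, i.e.\ that for $(A_1, A_2)$ in some neighborhood the restrictions $A_2\restriction_{\Phi_n^{\hat\odot i}}$ and $A_1\restriction_{\Phi_m^{\hat\odot j}}$ still land and converge in the Banach spaces $\mathcal L(\Phi_n^{\hat\odot i}, \Phi_m^{\hat\odot j})$ and $\mathcal L(\Phi_m^{\hat\odot j}, \Phi_{m'}^{\hat\odot k})$ respectively. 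This should rest on strictness of the (LB)-structure---the Banach topology on each step coinciding with the topology induced by the ambient (LB)-space (Proposition~2 of~\cite{DS}, cited at the start of Subsection~\ref{vctes56u7})---and on the continuity of the inclusions between Banach steps, so that the continuity argument can be carried out one Banach step at a time and then assembled through the projective limit. Once this step is in place, the remainder of both proofs is a routine application of the bilinear Banach-space composition bound.
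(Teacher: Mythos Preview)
Your overall reduction (bilinear $\Rightarrow$ analytic once continuous) is correct and matches the paper. The gap is in the continuity argument for~(i), precisely at the step you flag as the ``main obstacle.'' The local-factoring plan cannot be carried out: in the (LB)-space $\mathcal L(\Phi_n^{\hat\odot i},\Phi^{\hat\odot j})=\indlim_{m}\mathcal L(\Phi_n^{\hat\odot i},\Phi_m^{\hat\odot j})$, \emph{no} open neighborhood of $A_2^0\restriction_{\Phi_n^{\hat\odot i}}$ is contained in a single step $\mathcal L(\Phi_n^{\hat\odot i},\Phi_m^{\hat\odot j})$ (if it were, that step would be open and the inductive limit would collapse to a Banach space). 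Strictness of the (LB)-structure only says that the topology \emph{induced} on each step agrees with the step's Banach topology; it does not say that open sets of the (LB)-space sit inside a step. Hence there is no neighborhood of $(A_1^0,A_2^0)$ on which the restrictions all land in your fixed Banach triple $(\Phi_n^{\hat\odot i},\Phi_m^{\hat\odot j},\Phi_{m'}^{\hat\odot k})$, and the operator-norm estimate $\|A_1A_2\|\le\|A_1\|\,\|A_2\|$ is not available uniformly over such a neighborhood.

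The paper avoids this by first using bilinearity to reduce to continuity \emph{at the origin} (a standard fact, e.g.\ \cite[Ch.~III, \S5]{Schaefer}), and then, instead of trying to trap a neighborhood in one Banach step, it builds the preimage neighborhood as a convex hull $\operatorname{conv}\bigcup_{l\ge n}\mathcal U_l\times\mathcal V_{n,l}$ spanning \emph{all} inductive steps simultaneously, with radii $r_m$ shrinking along the tower. This is exactly the manoeuvre needed to handle the inductive-limit target: one checks $f_n(\mathcal U_{l,m}\times\mathcal V_{n,l})\subset O_{n,m}$ step by step and then passes to convex hulls. Once you replace your base-point factoring by this origin/convex-hull construction, the rest of your outline (and your treatment of~(ii) via Lemma~\ref{vcrswu5}) goes through.
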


\begin{remark}\label{gcyd6ue6} Since the composition of analytic maps is analytic, the results of Lemma~\ref{gctessswe5} admit an immediate extension to the case of the product of more than two operators or continuous functions, respectively. 
\end{remark}

\begin{proof}[Proof of Lemma~\ref{gctessswe5}] (i) Denote the map in formula \eqref{cxdraqy} by $f$. First, let us prove that $f$ is continuous. Since 
$\mathcal L(\Phi^{\hat\odot i}, \Phi^{\hat\odot k})=\prlim_{n\to\infty}\mathcal L(\Phi_n^{\hat\odot i}, \Phi^{\hat\odot k})$, 
by e.g.\ \cite[Ch.~II, \S~5.2]{Schaefer}, it is sufficient to prove that, for each $n\in\mathbb N$, the map 
\begin{equation}\label{vctsea}
\mathcal L(\Phi^{\hat\odot j}, \Phi^{\hat\odot k})\times \mathcal L(\Phi^{\hat\odot i}, \Phi^{\hat\odot j})\ni(A_1, A_2)\mapsto (A_1A_2)\restriction_{\Phi_n^{\hat\odot i}}=A_1\big(A_2\restriction_{\Phi_n^{\hat\odot i}}\big) \in 
 \mathcal L(\Phi_n^{\hat\odot i}, \Phi^{\hat\odot k})
 \end{equation}
 is continuous. The map $ \mathcal L(\Phi^{\hat\odot i}, \Phi^{\hat\odot j})\ni A_2\mapsto A_2\restriction_{\Phi_n^{\hat\odot i}}\in \mathcal L(\Phi_n^{\hat\odot i}, \Phi^{\hat\odot j})$ 
 is continuous by the definition of the projective topology. Hence, to prove the continuity of the map in~\eqref{vctsea}, it is sufficient to show that the following map is continuous: 
\begin{equation}\label{rdresdses}
\mathcal L(\Phi^{\hat\odot j}, \Phi^{\hat\odot k})\times \mathcal L(\Phi_n^{\hat\odot i}, \Phi^{\hat\odot j})\ni(A_1, A_2)\mapsto f_n(A_1, A_2)=A_1A_2
\in \mathcal L(\Phi_n^{\hat\odot i}, \Phi^{\hat\odot k}).
 \end{equation}
 Since the map $f_n$ is bilinear, by e.g.\ \cite[Ch.~III, \S~5]{Schaefer}, it is sufficient to prove that $f_n$ is continuous at $(0, 0)$. Since
 $ \mathcal L(\Phi_n^{\hat\odot i}, \Phi^{\hat\odot k})= \indlim_{m\to\infty}\mathcal L(\Phi_n^{\hat\odot i}, \Phi_m^{\hat\odot k})$, 
 for each $V_n$, a neighborhood of zero in $ \mathcal L(\Phi_n^{\hat\odot i}, \Phi^{\hat\odot k})$, one can find a sequence $(r_m)_{m=n}^\infty$ with $r_m>0$ and $r_m\ge r_{m+1}$ for all $m\ge n$, and such that 
\begin{equation}\label{cfxa4S}
O_n=\operatorname{conv} \bigcup_{m=n}^\infty O_{n, m}\subset V_n.
\end{equation}
 Here, 
 \begin{equation}\label{vcgftdst}
 O_{n, m}= \big\{A\in \mathcal L(\Phi_n^{\hat\odot i}, \Phi_m^{\hat\odot k})\mid \|A\|_{ \mathcal L(\Phi_n^{\hat\odot i}, \Phi_m^{\hat\odot k})}<r_m\big\}
 \end{equation}
 and for a set $\mathcal A$, $\operatorname{conv} \mathcal A$ denotes the convex hull of $\mathcal A$. We obviously have 
\begin{equation}\label{vcxzrgew44}
\mathcal L(\Phi^{\hat\odot j}, \Phi^{\hat\odot k})\times \mathcal L(\Phi_n^{\hat\odot i}, \Phi^{\hat\odot j})= \indlim_{l\to\infty}
 \big(\mathcal L(\Phi^{\hat\odot j}, \Phi^{\hat\odot k})\times \mathcal L(\Phi_n^{\hat\odot i}, \Phi_l^{\hat\odot j})\big).\end{equation}
 For any $m\ge l\ge n$, we define sets
 \begin{align*}
U_{l, m}:=&\big\{A_1\in \mathcal L(\Phi_{l}^{\hat\odot j}, \Phi_{m}^{\hat\odot k})\mid \|A_1\|_{ \mathcal L(\Phi_{l}^{\hat\odot j}, \Phi_{m}^{\hat\odot k})}<r_{m}\big\}, \quad U_l:=\operatorname{conv} \bigcup_{m=l}^\infty U_{l, m}, \\
\mathcal V_{n, l}:=&\big\{A_2\in \mathcal L(\Phi_n^{\hat\odot i}, \Phi_{l}^{\hat\odot j})\mid \|A_2\|_{\mathcal L(\Phi_n^{\hat\odot i}, \Phi_{l}^{\hat\odot j})}\le 1\big\} 
 \end{align*} 
 Note that the set
$U_l$ is open in $\mathcal L(\Phi_l^{ \hat\odot j}, \Phi^{\hat\odot k})$. Further define
\begin{align*}
 {\mathcal U}_{l, m}:=&\big\{A_1\in \mathcal L(\Phi^{\hat\odot j}, \Phi^{\hat\odot k})\mid A_1\restriction_{\Phi_l^{\hat\odot j}}\in U_{l, m}\big\}, \\
{\mathcal U}_l:=&\operatorname{conv} \bigcup_{m=l}^\infty {\mathcal U}_{l, m}=\big\{A_1\in \mathcal L(\Phi^{\hat\odot j}, \Phi^{\hat\odot k})\mid A_1\restriction_{\Phi_l^{\hat\odot j}}\in U_l\big\}.
\end{align*}
By the definition of the projective limit topology, the set ${\mathcal U}_l$ is open in $\mathcal L(\Phi^{\hat\odot j}, \Phi^{\hat\odot k})$. But then, by \eqref{vcxzrgew44}, the set
\begin{equation}\label{vctw6u}
\mathcal O_n:=\operatorname{conv}\bigcup_{l=n}^\infty\, {\mathcal U}_l\times \mathcal V_{n, l}=\operatorname{conv}\bigcup_{l=n}^\infty\bigg(\operatorname{conv} \bigcup_{m=l}^\infty {\mathcal U}_{l, m}\bigg) \times \mathcal V_{n, l} 
\end{equation}
is open in $\mathcal L(\Phi^{\hat\odot j}, \Phi^{\hat\odot k})\times \mathcal L(\Phi_n^{\hat\odot i}, \Phi^{\hat\odot j})$. For any
$(A_1, A_2)\in \mathcal U_{l, m}\times \mathcal V_{n, l}$, we have $A_1A_2=(A_1\restriction_{\Phi_l^{\hat\odot j}})A_2\in \mathcal L(\Phi_n^{\hat\odot i}, \Phi_{m}^{\hat\odot k})$ and $\|A_1A_2\|_{\mathcal L(\Phi_n^{\hat\odot i}, \Phi_{m}^{\hat\odot k})}<r_m$. Hence, 
\begin{equation}\label{bufr7u}
f_n( \mathcal U_{l, m}\times \mathcal V_{n, l})\subset O_{n, m}.
\end{equation}
Since the map $f_n$ is bilinear, formulas \eqref{cfxa4S}, \eqref{vctw6u} and \eqref{bufr7u}
imply that $f_n(\mathcal O_n)\subset O_n$. 
Thus, the continuity of $f_n$ is proven, and so $f$ is continuous. 
 
Next, for any $A=(A_1, A_2), B=(B_1, B_2)\in \mathcal L(\Phi^{\hat\odot j}, \Phi^{\hat\odot k})\times \mathcal L(\Phi^{\hat\odot i}, \Phi^{\hat\odot j})$, we trivially have
$$f'(A;B)= \lim_{s\to0}\big(A_1B_2+B_1A_2+sB_1B_2\big)=A_1B_2+B_1A_2.$$
Note that $f'(A;\cdot)$ is linear. The continuity of the function $f'(A;B)$ in $(A, B)$ follows immediately from the continuity of~$f$.

(ii) By using Lemma~\ref{vcrswu5}, one can prove statement (ii) completely analogously to the proof of statement (i). 
\end{proof}

\begin{lemma}\label{cgtsrta} Let $i_1, i_2\in\mathbb N$, $j_1, j_2\in\mathbb N_0$ and $l_1, l_2\in\mathbb N$. We have:

(i) The map
\begin{equation}\label{vcfresar5y67}
\mathcal L(\Phi^{\hat\odot i_1}, \Phi^{\hat\odot j_1})\times \mathcal L(\Phi^{\hat\odot i_2}, \Phi^{\hat\odot j_2})\ni (A_1, A_2)\mapsto
A_1^{\odot l_1}\odot A_2^{\odot l_2}\in\mathcal L(\Phi^{\hat\odot(i_1l_1+i_2l_2)}, \Phi^{\hat\odot(j_1l_1+j_2l_2)})\end{equation}
is analytic.

(ii) The map
\begin{align*}
&C([0, 1];\mathcal L(\Phi^{\hat\odot i_1}, \Phi^{\hat\odot j_1}))\times C([0, 1];\mathcal L(\Phi^{\hat\odot i_2}, \Phi^{\hat\odot j_2}))\ni (A_1(\cdot), A_2(\cdot))\\
&\qquad \mapsto
A_1(\cdot)^{\odot l_1}\odot A_2(\cdot)^{\odot l_2}\in C([0, 1];\mathcal L(\Phi^{\hat\odot(i_1l_1+i_2l_2)}, \Phi^{\hat\odot(j_1l_1+j_2l_2)}))
\end{align*}
is analytic.
\end{lemma}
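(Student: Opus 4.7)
The plan is to recognize the map in~(i), call it $\Psi$, as a continuous polynomial of total degree $l_1+l_2$ in $(A_1, A_2)$. Write $\Psi$ as the composition of the continuous linear diagonal embedding $(A_1, A_2)\mapsto(A_1, \dots, A_1, A_2, \dots, A_2)$ (with $l_1$ copies of $A_1$ and $l_2$ copies of $A_2$) with the $(l_1+l_2)$-linear map
\[
\mathcal M(B_1, \dots, B_{l_1}, C_1, \dots, C_{l_2}):=B_1\odot\cdots\odot B_{l_1}\odot C_1\odot\cdots\odot C_{l_2}.
\]
Once $\mathcal M$ is shown to be jointly continuous, $\Psi$ becomes a continuous polynomial of degree $l_1+l_2$; its G\^ateaux derivative $(A, V)\mapsto \Psi'(A;V)$ is then, by polarization, of the same form as $\mathcal M$ applied to suitable arguments, and hence continuous, so the criterion recalled just before Lemma~\ref{tfy7e6e} applies and $\Psi$ is analytic.

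Joint continuity of $\mathcal M$ (equivalent, for a multilinear map, to continuity at the origin) is proved by the same scheme as in the proof of Lemma~\ref{gctessswe5}~(i). At the Banach level, combining $\|\operatorname{Sym}_k\|\le 1$ with the injective-tensor-product operator-norm identity of Subsection~\ref{vcftstwe6} yields
\[
\|A_1\odot\cdots\odot A_l\|_{\mathcal L(B_1^{\hat\odot(k_1+\cdots+k_l)}, B_2^{\hat\odot(m_1+\cdots+m_l)})}\le\prod_{r=1}^l\|A_r\|
\]
for any Banach spaces $B_1, B_2$ and any $A_r\in\mathcal L(B_1^{\hat\odot k_r}, B_2^{\hat\odot m_r})$. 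Writing the target space $\mathcal L(\Phi^{\hat\odot(i_1l_1+i_2l_2)}, \Phi^{\hat\odot(j_1l_1+j_2l_2)})$ as the projective limit of the (LB)-spaces $\mathcal L(\Phi_n^{\hat\odot(i_1l_1+i_2l_2)}, \Phi^{\hat\odot(j_1l_1+j_2l_2)})$, and each factor in the domain of $\mathcal M$ (after restriction to $\Phi_n^{\hat\odot i_r}$) as an inductive limit, one produces, for any basic neighborhood of zero of the form~\eqref{cfxa4S}--\eqref{vcgftdst} in the target, a convex-hull-type open neighborhood of the origin in the product domain, modeled on~\eqref{vctw6u}, whose image under $\mathcal M$ lies inside the given neighborhood. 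The only change from Lemma~\ref{gctessswe5}~(i) is that the product of two norm balls is replaced by a product of $l_1+l_2$ norm balls, and the above Banach-level estimate replaces the bilinear bound $\|A_1A_2\|\le\|A_1\|\|A_2\|$.

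Part~(ii) follows by the same template, with Lemma~\ref{vcrswu5} replacing the basic inductive-limit representation of $\mathcal L(\Phi_n^{\hat\odot i}, \Phi^{\hat\odot j})$, and with the pointwise-in-$t$ Banach estimate $\sup_{t\in[0,1]}\|A_1(t)\odot\cdots\odot A_l(t)\|\le\prod_{r=1}^l\sup_{t\in[0,1]}\|A_r(t)\|$ in place of the operator-norm bound. I expect the main obstacle to be purely combinatorial, namely the careful bookkeeping of the $l_1+l_2$ simultaneous choices of indices and convex hulls in the continuity argument; conceptually, however, the whole proof is a mechanical upgrade of Lemma~\ref{gctessswe5}~(i) from the bilinear to the multilinear setting, with analyticity obtained for free from continuity of $\mathcal M$ via the polynomial character of $\Psi$.
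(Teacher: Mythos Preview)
Your proposal is correct and follows essentially the same route as the paper's proof: both decompose the map as the diagonal embedding followed by the multilinear symmetric tensor product, and both establish continuity of the latter by the convex-hull neighborhood argument of Lemma~\ref{gctessswe5}~(i) together with the Banach-level norm bound $\|A_1\odot\cdots\odot A_l\|\le\prod_r\|A_r\|$. The only organizational difference is that the paper first proves the bilinear case $l_1=l_2=1$ explicitly and then obtains the general multilinear map by iterating that result, whereas you attack the full $(l_1+l_2)$-linear map $\mathcal M$ in one pass; conceptually these are the same argument.
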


\begin{remark}\label{cxfet678i9}
Similarly to Remark~\ref{gcyd6ue6}, the result of Lemma~\ref{cgtsrta} admits an extension to the symmetric tensor product of more than two symmetric powers of operators or functions, respectively.
\end{remark}

\begin{proof}[Proof of Lemma~\ref{cgtsrta}] We will only discuss the proof of part (i). 
If $l_1=l_2=1$, then, we can prove the statement by slightly modifying the proof of Lemma~\ref{gctessswe5} (i). Indeed, denote by $g$ the map in formula \eqref{vcfresar5y67} 
with $l_1=l_2=1$. Similarly to \eqref{rdresdses}, we conclude that, in order to show the continuity of $g$, it is sufficient to prove the continuity of the bilinear map
$$\mathcal L(\Phi_n^{\hat\odot i_1}, \Phi^{\hat\odot j_1})\times \mathcal L(\Phi_n^{\hat\odot i_2}, \Phi^{\hat\odot j_2})\ni (A_1, A_2)\mapsto
g_n(A_1, A_2)=A_1\odot A_2\in\mathcal L(\Phi_n^{\hat\odot(i_1+i_2)}, \Phi^{\hat\odot(j_1+j_2)})$$
for each $n\in\mathbb N$. Similarly to \eqref{cfxa4S}, \eqref{vcgftdst}, we consider a set of the form
\begin{align*}
O_n&=\operatorname{conv} \bigcup_{m=n}^\infty O_{n, m}, \\
O_{n, m}&=\big\{A\in \mathcal L(\Phi_n^{\hat\odot (i_1+i_2)}, \Phi_m^{\hat\odot (j_1+j_2)})\mid \|A\|_{\mathcal L(\Phi_n^{\hat\odot (i_1+i_2)}, \Phi_m^{\hat\odot (j_1+j_2)})}<r_m\big\}.
\end{align*}
Similarly to \eqref{vcxzrgew44}, we have
$$\mathcal L(\Phi_n^{\hat\odot i_1}, \Phi^{\hat\odot j_1})\times \mathcal L(\Phi_n^{\hat\odot i_2}, \Phi^{\hat\odot j_2})=\indlim_{m\to\infty}
\big(\mathcal L(\Phi_n^{\hat\odot i_1}, \Phi_m^{\hat\odot j_1})\times \mathcal L(\Phi_n^{\hat\odot i_2}, \Phi_m^{\hat\odot j_2})\big).$$
Define
\begin{align*}
\mathcal U_{n, m}:=&\big\{A_1\in \mathcal L(\Phi_n^{\hat\odot i_1}, \Phi_m^{\hat\odot j_1})\mid \big\|A_1\|_{\mathcal L(\Phi_n^{\hat\odot i_1}, \Phi_m^{\hat\odot j_1})}<r_m\}, \\ 
\mathcal V_{n, m}:=&\big\{A_2\in \mathcal L(\Phi_n^{\hat\odot i_2}, \Phi_m^{\hat\odot j_2})\mid \big\|A_2\|_{\mathcal L(\Phi_n^{\hat\odot i_2}, \Phi_m^{\hat\odot j_2})}<1\}, \\
\mathcal O_n:=&\operatorname{conv}\bigcup_{m=n}^\infty \mathcal U_{n, m}\times \mathcal V_{n, m}.
\end{align*}
Then, $g_n(\mathcal U_{n, m}\times \mathcal V_{n, m})\subset O_{n, m}$, and so $g_n(\mathcal O_n)\subset O_n$. Hence, the map $g_n$ is continuous. The analyticity of the map $g$ can now be easily shown.

To prove the statement of part (i) for general $l_1$ and $l_2$, it is now sufficient to show that, for any $i\in\mathbb N$, $j\in\mathbb N_0$ and $l\ge2$, the map
$$\mathcal L(\Phi^{\hat\odot i}, \Phi^{\hat\odot j})\ni A\mapsto A^{\odot l}\in \mathcal L(\Phi^{\hat\odot il}, \Phi^{\hat\odot jl})$$
is analytic. To this end, we represent this map as a composition of two analytic maps. First, we note that the `diagonal' map 
$$\mathcal L(\Phi^{\hat\odot i}, \Phi^{\hat\odot j})\ni A\mapsto (\underbrace{A, \dots, A}_{\text{$l$ times}})\in \mathcal L(\Phi^{\hat\odot i}, \Phi^{\hat\odot j})^l$$
is obviously analytic.
Next, since the map $g$ is analytic, the following map is also analytic:
$$ \mathcal L(\Phi^{\hat\odot i}, \Phi^{\hat\odot j})^l\ni(A_1, A_2, \dots, A_l)\mapsto A_1\odot A_2\odot\dots\odot A_l\in \mathcal L(\Phi^{\hat\odot il}, \Phi^{\hat\odot jl}).$$
From this the lemma follows.
\end{proof}

\begin{lemma}\label{ctrsw6u4}
For any $i\in\mathbb N$ and $j\in\mathbb N_0$, the following map is analytic:
\begin{equation}\label{dstew53w5}
C([0, 1];\mathcal L(\Phi^{\hat\odot i}, \Phi^{\hat\odot j}))\ni A(\cdot)\mapsto \int_0^{\bullet} A(r)\, dr\in C([0, 1];\mathcal L(\Phi^{\hat\odot i}, \Phi^{\hat\odot j})), \end{equation}
which in turn implies analyticity of the map
$$
C([0, 1];\mathcal L(\Phi^{\hat\odot i}, \Phi^{\hat\odot j}))\ni A(\cdot)\mapsto \int_0^1 A(r)\, dr\in \mathcal L(\Phi^{\hat\odot i}, \Phi^{\hat\odot j}). $$
\end{lemma}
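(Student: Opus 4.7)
The plan hinges on a single observation: the integration map in \eqref{dstew53w5}, call it $\mathcal J$, is \emph{linear}. For any continuous linear map $f$ between complete l.c.s.'s, the G\^ateaux derivative $f'(z;v)=f(v)$ is jointly continuous in $(z,v)$ and linear in $v$, so $f$ is automatically analytic. The entire content of the lemma therefore reduces to showing that $\mathcal J$ is continuous. The second claim then follows by post-composing $\mathcal J$ with the continuous (hence analytic) linear evaluation map $C([0,1];\mathcal L(\Phi^{\hat\odot i},\Phi^{\hat\odot j}))\ni f\mapsto f(1)\in \mathcal L(\Phi^{\hat\odot i},\Phi^{\hat\odot j})$.

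To prove continuity of $\mathcal J$, first peel off the projective limit on the target. By \eqref{fstsw5u} in Lemma~\ref{vcrswu5},
$$C([0,1];\mathcal L(\Phi^{\hat\odot i},\Phi^{\hat\odot j}))=\prlim_{n\to\infty} C([0,1];\mathcal L(\Phi_n^{\hat\odot i},\Phi^{\hat\odot j})),$$
so it suffices to fix $n\in\mathbb N$ and check continuity of $A(\cdot)\mapsto \big(\int_0^{\bullet}A(r)\,dr\big)\restriction_{\Phi_n^{\hat\odot i}}=\int_0^{\bullet}A(r)\restriction_{\Phi_n^{\hat\odot i}}\,dr$ as a map into $C([0,1];\mathcal L(\Phi_n^{\hat\odot i},\Phi^{\hat\odot j}))$. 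Since the restriction map $A(\cdot)\mapsto A(\cdot)\restriction_{\Phi_n^{\hat\odot i}}$ is continuous by the very definition of the projective limit topology, it is enough to prove continuity of the linear integration map
$$\mathcal J_n:C([0,1];\mathcal L(\Phi_n^{\hat\odot i},\Phi^{\hat\odot j}))\to C([0,1];\mathcal L(\Phi_n^{\hat\odot i},\Phi^{\hat\odot j})),\quad B(\cdot)\mapsto \int_0^{\bullet}B(r)\,dr,$$
which is well-defined by Lemma~\ref{dtstesrew5}.

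Now unfold the inductive limit via \eqref{fxdste5swu5}, writing $C([0,1];\mathcal L(\Phi_n^{\hat\odot i},\Phi^{\hat\odot j}))=\indlim_{m\to\infty} C([0,1];\mathcal L(\Phi_n^{\hat\odot i},\Phi_m^{\hat\odot j}))$, each step being a Banach space under the supremum norm. On the $m$-th step the Riemann integral obeys $\sup_{t}\bigl\|\int_0^{t}B(r)\,dr\bigr\|\le \sup_r\|B(r)\|$, so $\mathcal J_n$ maps each step into itself with operator norm $\le 1$. A typical open neighborhood of zero in the inductive limit has the form $O=\operatorname{conv}\bigcup_m O_m$, where $O_m=\{B\in C([0,1];\mathcal L(\Phi_n^{\hat\odot i},\Phi_m^{\hat\odot j})):\sup_{t}\|B(t)\|<r_m\}$ for some positive reals $r_m$; by the previous norm bound $\mathcal J_n(O_m)\subset O_m$, and by linearity $\mathcal J_n$ preserves convex hulls, so $\mathcal J_n(O)\subset O$. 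Hence $\mathcal J_n$ is continuous at zero, and by linearity continuous everywhere.

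The only mildly subtle point is the bookkeeping with convex hulls in the inductive limit topology, but this is considerably easier than in the proof of Lemma~\ref{gctessswe5}~(i), because $\mathcal J_n$ is linear rather than bilinear, so no bi-absorption trick is needed. All other inputs (existence of the Riemann integral, norm-$1$ estimate for integration on Banach spaces, continuity of restriction and evaluation maps) are entirely standard.
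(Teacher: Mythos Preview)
Your proof is correct and follows essentially the same route as the paper's: reduce analyticity to continuity by linearity, peel off the projective limit via \eqref{fstsw5u}, then the inductive limit via \eqref{fxdste5swu5}, and finish with the sup-norm estimate for the Riemann integral. The only cosmetic difference is that, once at the inductive-limit stage, the paper invokes the universal property of (strict) inductive limits for \emph{linear} maps (citing \cite[Ch.~II, \S~6.1]{Schaefer}) to pass directly to the Banach steps $C([0,1];\mathcal L(\Phi_n^{\hat\odot i},\Phi_k^{\hat\odot j}))$, whereas you verify continuity by hand via the convex-hull description of neighborhoods; both arguments are equivalent here.
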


\begin{proof} Similarly to the proofs of Lemmas~\ref{gctessswe5} and \ref{cgtsrta}, we easily see that the statement will follow if we prove the continuity of the map in \eqref{dstew53w5}. In view of formula \eqref{fstsw5u}, and similarly to \eqref{rdresdses}, it is sufficient to prove that, for each $n\in\mathbb N$, the following map is continuous:
\begin{equation}\label{vctrs6ue}
C([0, 1];\mathcal L(\Phi_n^{\hat\odot i}, \Phi^{\hat\odot j}))\ni A(\cdot)\mapsto \int_0^\bullet A(r)\, dr\in C([0, 1];\mathcal L(\Phi_n^{\hat\odot i}, \Phi^{\hat\odot j})).\end{equation}
Since the map in \eqref{vctrs6ue} is linear, by formula \eqref{fxdste5swu5} and e.g.\ \ \cite[Ch.~II, \S~6.1]{Schaefer}, it is sufficient to prove that, for each $k\in\mathbb N$, the following map is continuous:
$$C([0, 1];\mathcal L(\Phi_n^{\hat\odot i}, \Phi_k^{\hat\odot j}))\ni A(\cdot)\mapsto \int_0^\bullet A(r)\, dr\in C([0, 1];\mathcal L(\Phi_n^{\hat\odot i}, \Phi_k^{\hat\odot j})).$$
But this continuity immediately follows from the estimate 
\[\sup_{t\in[0, 1]}\bigg\|\int_0^t A(r)\, dr\bigg\|_{\mathcal L(\Phi_n^{\hat\odot i}, \Phi_k^{\hat\odot j})}\le \int_0^1\|A(r)\|_{\mathcal L(\Phi_n^{\hat\odot i}, \Phi_k^{\hat\odot j})}\, dr\le \sup_{t\in[0, 1]}\|A(t)\|_{\mathcal L(\Phi_n^{\hat\odot i}, \Phi_k^{\hat\odot j})}.\qedhere\]
\end{proof}

\renewcommand{\thesection}{B}

\section{Appendix: (Infinite-dimensional) Lie structures with a global parametrization}

We will now briefly recall some constructions of Milnor \cite{Milnor} in the special case of a Lie group with a global parametrization over a complete l.c.s. 

\subsection{Lie group with a global parametrization}\label{app:LieGroup}

Let $V$ be a complete l.c.s.\ over $\mathbb C$, let $G$ be a set, and let $\varphi:V\to G$ be a bijective map. The map $\varphi$ defines a topology on $G$ which makes $\varphi$ into a homeomorphism. Then $G$ becomes a (single-chart smooth) {\it manifold modeled on $V$}, and $\varphi$ is called a {\it global parametrization of $G$}.

Let $\psi:V\to G$ be another bijective map such that both maps $\varphi^{-1}\circ \psi:V\to V$ and $\psi^{-1}\circ\varphi:V\to V$ are analytic. Then $\psi$ provides another global parametrization of $G$. 

We will say that a map $f:G\to G$ is analytic if the map $\varphi^{-1}\circ f\circ\varphi: V\to V$ is analytic. Similarly, for a complete l.c.s.\ $W$ over $\mathbb C$, one defines analytic maps from $G$ to $W$ and from $W$ to $G$.

Now assume additionally that $(G,\cdot)$ is a group such that the group multiplication $G^2\ni(g,h)\mapsto g\cdot h\in G$ and the inversion map $G\ni g \mapsto g^{-1}\in G$ are analytic maps. Then $G$ is called a {\it Lie group modeled on $V$}.  

Note that, if $G$ is a Lie group, then $G$ is, of course, a topological group.

\subsection{Tangent vectors and tangent spaces}\label{app:TangentVectors}

Let $I$ be an open subset of $\R$. We will say that  $\gamma:I\to G$ is a smooth ($G$-valued) curve if the function  $I\ni t\mapsto \varphi^{-1}(\gamma(t))\in V$ is smooth. We will use the standard notation $\gamma'(t):=\frac{d}{dt}\,\varphi^{-1}(\gamma(t))$.

Assume $0\in I$,  let $\gamma:I\to G$ be a smooth curve,  and denote $g:=\gamma(0)\in G$. The vector $\gamma'(0)\in V$ is then called the \emph{tangent vector} of $\gamma$ at $g$. 
For a fixed $g\in G$, two smooth curves $\gamma,\tilde{\gamma}:I\to G$ with $\gamma(0)=\tilde{\gamma}(0)=g\in G$ are called {\it equivalent} if they have equal tangent vectors at $g$.
 The set of all corresponding equivalence classes of curves is called the \emph{tangent space} at $g$, and is denoted by $T_{g}G$. For each class from $T_{g}G$, we can, hence, consider the corresponding tangent vector from $V$. Conversely, for each $v\in V$, we can define a smooth curve $\gamma:I\to G$ by setting 
$  \gamma(t):=\varphi(tv+\varphi^{-1}(g))$ ($t\in I$), whose tangent vector at $g$ is obviously $v$. Thus, we have a bijection between the tangent space $T_{g}G$ and the space $V$, which makes each $T_{g}G$   a l.c.s.\ isomorphic to $V$;  below we will often identify $T_gG$ with $V$, writing $T_gG \cong V$. Note, however, that this identification of $T_gG$ with $V$ depends on the parametrization $\varphi$.

\subsection{Lie algebra and Lie bracket}\label{app:LieAlgebra}
Let $e$ be the identity element of the Lie group $G$. Without loss of generality, we can assume that $\varphi(0)=e$. (Otherwise, we can consider $\tilde{\varphi}:V\to G$ defined by $\tilde{\varphi}(v):=\varphi(v+\varphi^{-1}(e))$,  which is evidently a  global parametrization of $G$.) For each $v, w\in T_e G\cong V$, 
consider the function 
\begin{equation}\label{awawfxdtrsre}
h_{v, w}(s_1, s_2):=\varphi^{-1}\big(\varphi(s_1v)\cdot\varphi(s_2w)\big)\in V,\quad s_1, s_2\in\mathbb C.
\end{equation}
 Since the group multiplication is analytic, the function $h_{v, w}:\mathbb C^2\to V$ is also analytic. Hence, we can consider its Taylor expansion at $(0, 0)$, which is given by (note that $h_{v, w}(0, 0)=0$):
\begin{equation}\label{qqqvcxfdxdz}
h_{v, w}(s_1, s_2)=s_1 v + s_2w +s_1s_2b(v, w)+\cdots
\end{equation}
with $b(v, w)\in V$. We now introduce a product operation on $T_eG\cong V$:
\begin{equation}\label{cftes56u7}
[v, w]:=b(v, w)-b(w, v)\in V, \quad v, w\in V .\end{equation}
 Then $\mathfrak{g}:=T_eG$ is called the \emph{Lie algebra of the Lie group $G$}, and $[\cdot, \cdot]$ is called the \emph{Lie bracket on $\mathfrak g$}. (In particular, if the group $G$ is abelian, the corresponding Lie bracket is identically zero.)

Let $\psi:V\to G$ be another parametrization of $G$, with $\psi(0)=e$. Assume that, for each $v\in V$,
\begin{equation}\label{cxts5sw5y}
\frac d{dt}\Big|_{t=0}\varphi^{-1}(\psi(tv))=v,\quad v\in V,
\end{equation}
i.e., the derivative of the map $\varphi^{-1}\circ\psi:V\to V$ at zero is the identity map in $V$. Then we will say that the map $\psi$ provides an {\it equivalent (global) parametrization of} $G$. 

Formula \eqref{cxts5sw5y} implies that, for each smooth curve $\gamma:I\to G$ with $\gamma(0)=e$, we have $\frac d{dt}\big|_{t=0}\varphi^{-1}(\gamma(t))=\frac d{dt}\big|_{t=0}\psi^{-1}(\gamma(t))$. Hence, the identification of $\mathfrak g$ with $V$ under the equivalent parametrization $\psi$ is the same as  the identification of $\mathfrak g$ with $V$ under $\varphi$. This implies that the Lie bracket, considered as a map $V^2\ni(v,w)\mapsto [v,w]\in V$, does not change if we use the equivalent parametrization $\psi$ instead of $\varphi$. 

\subsection{Exponential map}\label{app:ExpMap}

A {\it one-parameter subgroup of $G$} is a smooth curve $\gamma:\mathbb R\to G$ that satisfies $\gamma(0)=e$ and $\gamma(s+t)=\gamma(s)\cdot\gamma(t)$ for all $s,t\in\mathbb R$. 

Assume that, for each $v\in \mathfrak g$, there exists a one-parameter subgroup $\gamma_v(\cdot)$ of $G$ that satisfies $\gamma_v'(0)=v$. Then we can define the {\it exponential map} $\operatorname{Exp}:\mathfrak{g}\to G$ by $\operatorname{Exp}(v):=\gamma_v(1)$. Moreover, it can be checked that $\operatorname{Exp}(tv)=\gamma_{tv}(1)=\gamma_v(t)$, $t\in \mathbb R$.

In the lemma below, we use the identification $\mathfrak g\cong V$ under parametrization $\varphi$.

\begin{lemma}\label{analytic_reparametrization}
Assume that the exponential map $\operatorname{Exp}:V\to G$  exists and is a bijective map. Denote by $\operatorname{Log}:G\to V$ the inverse map of $\operatorname{Exp}$, and assume that both maps $\operatorname{Exp}$ and $\operatorname{Log}$ are analytic. Then the exponential map $\operatorname{Exp}:V\to G$  provides an equivalent (global) parametrization of $G$. Hence, the Lie bracket, considered as a map $V^2\ni(v,w)\mapsto [v,w]\in V$, does not change if we use the parametrization $\operatorname{Exp}$ instead of $\varphi$.
\end{lemma}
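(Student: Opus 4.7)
The proof will proceed in three short steps, essentially unpacking the definitions of \emph{exponential map} and \emph{equivalent global parametrization} given in Subsections \ref{app:ExpMap} and \ref{app:LieAlgebra}.

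First, I will verify that $\operatorname{Exp}(0)=e$. Indeed, the constant curve $\gamma(t)\equiv e$ is a one-parameter subgroup of $G$ whose tangent vector at $e$ is $0\in V\cong T_eG$; hence, by uniqueness of the one-parameter subgroup with a given initial tangent vector (which is built into the definition of $\operatorname{Exp}$), we have $\operatorname{Exp}(0)=\gamma(1)=e$. Second, and this is the key point, I will verify the defining identity \eqref{cxts5sw5y} of an equivalent parametrization with $\psi=\operatorname{Exp}$. Fix $v\in V$ and let $\gamma_v:\mathbb R\to G$ be the one-parameter subgroup with $\gamma_v'(0)=v$, so that $\operatorname{Exp}(tv)=\gamma_v(t)$ for all $t\in\mathbb R$ (see Subsection \ref{app:ExpMap}). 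Then, by the very definition of the tangent vector $\gamma_v'(0)$ (Subsection \ref{app:TangentVectors}) and of the identification $T_eG\cong V$ through $\varphi$,
\begin{equation*}
\frac{d}{dt}\Big|_{t=0}\varphi^{-1}(\operatorname{Exp}(tv))=\frac{d}{dt}\Big|_{t=0}\varphi^{-1}(\gamma_v(t))=\gamma_v'(0)=v,
\end{equation*}
which is precisely \eqref{cxts5sw5y}.

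Third, I will invoke the analyticity assumption on $\operatorname{Exp}$ and $\operatorname{Log}$: since $\operatorname{Exp}:V\to G$ is bijective with both $\operatorname{Exp}$ and $\operatorname{Log}=\operatorname{Exp}^{-1}$ analytic, the map $\operatorname{Exp}$ is itself a valid global parametrization of $G$, and the composite maps $\varphi^{-1}\circ\operatorname{Exp}$ and $\operatorname{Exp}^{-1}\circ\varphi$ are analytic (as compositions of analytic maps). Combined with Steps 1 and 2, this is precisely the definition of $\operatorname{Exp}$ providing an \emph{equivalent} global parametrization of $G$.

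The final conclusion about the invariance of the Lie bracket under the change of parametrization from $\varphi$ to $\operatorname{Exp}$ is then immediate from the discussion in Subsection \ref{app:LieAlgebra}: equivalent parametrizations induce the same identification of $\mathfrak g=T_eG$ with $V$ (as both $\varphi$ and $\operatorname{Exp}$ differentiate to the identity map of $V$ at $0$), and hence the Taylor coefficient $b(v,w)$ in the expansion \eqref{qqqvcxfdxdz} of $h_{v,w}$, and therefore the Lie bracket \eqref{cftes56u7}, is unchanged. There is no substantial obstacle here; the result is essentially a bookkeeping statement ensuring that the Lie bracket computed from the exponential chart coincides with the one computed from the original chart, and all the technical content has already been deposited in the definitions preceding the lemma.
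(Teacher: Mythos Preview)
Your proof is correct and follows essentially the same approach as the paper's: both arguments reduce to checking that $\operatorname{Exp}$ is a global parametrization (from the analyticity hypotheses) and then verifying condition \eqref{cxts5sw5y} via $\operatorname{Exp}(tv)=\gamma_v(t)$ and $\gamma_v'(0)=v$. Your write-up is simply more detailed, explicitly verifying $\operatorname{Exp}(0)=e$ and spelling out the invariance of the Lie bracket, whereas the paper leaves these as understood.
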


\begin{proof} It follows from the conditions of the lemma that the exponential map $\operatorname{Exp}$ provides a parametrization of $G$. Hence, we only need to check that formula \eqref{cxts5sw5y} holds for $\psi=\operatorname{Exp}$. 
Let $v\in V$, and let $\gamma_v(\cdot)$ be the corresponding one-parameter subgroup of $G$.  Then $\operatorname{Exp}(tv)=\gamma_v(t)$ and $\frac{d}{dt}\big|_{t=0}\varphi^{-1}(\gamma_v(t))=v$ by the definition of $\gamma_v(\cdot)$.
\end{proof}

\subsection{Regular Lie group}\label{app:RegLieGroup}
For each $g\in G$ and $v\in\mathfrak g\cong V$, we define $g\cdot v \in T_gG\cong V$ by\footnote{For $g\in G$ and $v\in\mathfrak g$, $g\cdot v\in T_gG$ is a special case of the product in the tangent bundle $TG$ considered as a Lie group, see \cite[Section~5]{Milnor}. 
} $g\cdot v :=\frac d{ds}\big|_{s=0}\gamma(s)$, where $\gamma(s):=g\cdot \varphi(sv)$. 

Assume that, for each smooth curve $\gamma:[0,1]\to \mathfrak g\cong V$, there exists a  smooth curve $\eta:[0,1]\to G$ such that $\eta'(t)=\eta(t)\cdot\gamma(t)$ for each $t\in[0,1]$, and the map
$C^\infty([0,1];\mathfrak g)\ni\gamma\mapsto \eta(1)\in G$ is analytic. Then one says that the Lie group $G$ is {\it regular}  (see \cite[Section~7]{Milnor} or \cite[Section~5]{Gloeckner} for further details).

Note that the condition of regularity is stronger than the condition of existence of the analytic exponential map $\operatorname{Exp}:V\to G$.   

\subsection{Embedded Lie subgroup}\label{app:EmbLieSubGroup}

Assume that the conditions of Lemma~\ref{analytic_reparametrization} are satisfied. Let $W$ be a  closed subspace of $V$, and assume that $H:=\operatorname{Exp}(W)$ is a subgroup of $G$. Then one says that $H$ is an {\it embedded Lie subgroup of $G$}.  See \cite[Section~8]{Gloeckner} for further details.

Note that $H$ is itself a Lie group modeled on $W$ through the global parametrization $\operatorname{Exp}\restriction_W:W\to H$.

\end{document}